\documentclass[11pt,a4paper]{article}

\usepackage[left=1.1in, right=1.1in, top=1.0in, bottom=1.0in]{geometry}
\usepackage{amsfonts,amstext,amsmath,amssymb,amsopn,amsthm}
\usepackage{mathrsfs}
\usepackage{mathtools}
\usepackage{physics}
\usepackage{algorithm, float}
\usepackage{algorithmic}
\usepackage{todonotes}
\usepackage{enumerate}
\usepackage{subcaption}
\usepackage[normalem]{ulem}
\usepackage{cite}
\usepackage[colorlinks=true, 
linkcolor=blue,
pdfstartview=FitH,      
breaklinks=true,        
bookmarksopen=true,     
bookmarksnumbered=true  
]{hyperref}

\newcommand{\Fcal}{{\mathcal F}}
\newcommand{\Gcal}{{\mathcal G}}

\renewcommand{\Bbb}{\mathbb{B}}

\newcommand{\Nbb}{\mathbb{N}}

\newcommand{\Rbb}{\mathbb{R}}

\newcommand{\epsilonhat}{{\widehat{\epsilon}}}

\newcommand{\omegahat}{{\widehat{\omega}}}

\newcommand{\tauhat}{{\widehat{\tau}}}

\newcommand{\Tbar}{{\overline{T}}}

\newcommand{\tbar}{{\overline{t}}}

\newcommand{\xbar}{{\overline{x}}}

\newcommand{\alphabar}{{\overline{\alpha}}}

\newcommand{\epsilonbar}{{\overline{\epsilon}}}

\newcommand{\paren}[1]{\left(#1\right)}

\newcommand{\klam}[1]{\left\{#1\right\}}

\renewcommand{\equiv}{:=}

\newcommand{\Ball}{{\Bbb}}

\newcommand{\set}[2]{\left\{#1\,\left|\,#2\right.\right\}}

\newcommand{\map}[3]{#1:\,#2\rightarrow #3\,}
\newcommand{\mmap}[3]{#1:\,#2\rightrightarrows #3\,}

\newcommand{\und}{\quad\mbox{and}\quad}

\DeclareMathOperator{\Id}{Id}

\DeclareMathOperator{\epi}{{\rm epi}}

\DeclareMathOperator{\prox}{prox}

\DeclareMathOperator{\argmin}{argmin\,}

\DeclareMathOperator{\Fix}{\mathsf{Fix}\,}

  \newtheorem{theorem}{Theorem}[section]
  
  \newtheorem{lemma}[theorem]{Lemma}
  \newtheorem{proposition}[theorem]{Proposition}
  \newtheorem{corollary}[theorem]{Corollary}

  \theoremstyle{definition}
  \newtheorem{definition}[theorem]{Definition}

  \newtheorem{remark}[theorem]{Remark}
  
  \newtheorem{assumption}[theorem]{Assumption}

\title{Quantitative Convergence of Proximal Splitting Iterations in Uniformly Convex Metric Spaces}
\author{
{D. Russell Luke} 
\thanks{Institute for Numerical and Applied Mathematics,
    University of Goettingen,
    37083 Goettingen, Germany. DRL was supported in part by 
    the Deutsche Forschungsgemeinschaft (DFG, German Research Foundation) – Project number 541767835
    and 566257456.
	\texttt{r.luke@math.uni-goettingen.de}}
\and {Mahshid Mirhashemi}
 \thanks{Institute for Numerical and Applied Mathematics,
University of G\"ottingen. MM was supported by
the Deutsche Forschungsgemeinschaft (DFG, German Research Foundation) – Project number 541767835
\texttt{mahshid.mirhashemi@uni-goettingen.de}}
}

\date{\today}

\begin{document}
\reversemarginpar
 \maketitle

 \begin{abstract}
We provide sufficient conditions for quantitative
convergence of the iterates of proximal splitting algorithms for minimizing a sum of functions on a metric space.
The theory does not assume that the functions have common minima, nor does it require vanishing proximal parameters or step sizes.
Our results are stated for general $p$-uniformly convex
spaces with curvature bounded above, and a corollary specializes the main theorem
to Hadamard spaces, where many assumptions for the more general setting can be dropped.
The theory is demonstrated with computation of Fr\'echet means in the
space of SPD matrices with the affine invariant metric (a Hadamard space) and the
sphere with the usual geodesic metric (a CAT($\kappa$) metric space).
\end{abstract}

{\small \noindent {\bfseries 2010 Mathematics Subject Classification:}
  Primary 
  47H09, 
  47H10, 
  49M37, 
  53C21  

 Secondary 
 90C25,	  
 90C30,	  
 51F99,   
 49M27,   
  }

\noindent {\bfseries Keywords:}
proximal splitting, rate of convergence, Fréchet mean, barycenter,  metric space, symmetric positive definite matrices,  positive curvature, Hadamard space,
CAT(k) space, nonexpansive mapping, firmly nonexpansive mapping, fixed point iteration, proximal point algorithm

\section{Motivation}\label{s:0}
 For a domain $G$ equipped with a uniformly convex metric $\map{d}{G\times G}{\Rbb}$, we consider the
 problem
 \begin{equation}\label{eq:gen prob}
     \argmin_{x\in G} \sum_{j=1}^m f_j(x)
 \end{equation}
 where the extended-valued functions $\map{f_j}{G}{(-\infty, +\infty]}$ are geodesicly convex.
An important case study is the computation of barycenters.  Given a collection of points $\{x_1,\dots,x_m\}\subset G$
and weights $\omega_j\in(0,1)$ with $\sum_{j=1}^m\omega_j=1$, compute
 \begin{equation}\label{eq:frechet mean}
     \argmin_{x\in G} \mathcal{F}(x)\equiv\sum_{j=1}^m \omega_j d(x,x_j)^p.
 \end{equation}
The  function $\Fcal$ is called the {\em Fréchet function} and its minimizer is the
{\em barycenter} or {\em Fréchet mean} of the points $\{x_1,\dots,x_m\}$.

This problem was brought to the attention of the continuous optimization community
by Bačák \cite{Bacak14,Bacak_book, Bacak13}, though it had already been studied in \cite{Jost1995}
and is an object of obvious interest
in statistics \cite{BhaLiz17, HuckNye24, ElGaHuTu21, Afsari11, MilOwePro15}.
Convergence has been determined in Hadamard spaces \cite[Theorem 3.4]{Bacak14}
and in complete Alexandrov spaces with curvature bounded above and finite diameter in \cite[Theorems 5.1 and 5.5]{OhtaPal15}.
Convergence results for firmly nonexpansive mappings more generally were obtained in \cite{AriLeuLop14}.
Rates of convergence of {\em solutions} to
\eqref{eq:frechet mean} as $m\to\infty$ have been
established in probability and expectation in \cite{Schoetz19}.  Rates of convergence of the value function
are implicit in the convergence proofs of \cite{Bacak14} and \cite{OhtaPal15}.
The value of the Fr\'echet function, however, is seldom of interest - it
is the $\argmin$ that carries the relevant information from a statistical point of view.
Incidentally this is also the case for the envelope function associated with the prox mapping - the value of the envelope is rarely used.
Rates of {\em asymptotic regularity} of sequences for $p$-uniformly convex spaces was obtained in \cite[Theorem 3.2, Corollary 3.3]{RuiLopNic15}
for {\em consistent} problems.
A consistent problem in the format \eqref{eq:gen prob} is one in which
$\cap_j\argmin f_j\neq\emptyset$.  This is a common assumption that is not infrequently satisfied for {\em feasibility} problems where
the functions $f_j$ are indicator functions to sets, or when $m=1$.
From asymptotic regularity of the sequences the authors of \cite{RuiLopNic15} are able to obtain convergence in the consistent case in
CAT(0) and CAT($\kappa$) spaces.  Rates of convergence of the iterates
of proximal splitting algorithms for consistent problems were obtained  in spaces with nonnegative curvature in
\cite[Theorem 30]{BLL} and
in CAT($\kappa$) spaces in \cite[Theorem 25]{LauLuk21}.  More generally, however, consistency fails for many
important problems.  The requirement of consistency does not permit extension of these results to
the barycenter problem, for example.  The algorithms in \cite{Bacak14, OhtaPal15} have vanishing step sizes or proximal parameters so that,
asymptotically, the proximal operators converge to the identity mapping and the resulting splitting algorithm is asymptotically consistent.
This makes it difficult (but not impossible) to get convergence rates on the iterates.  We are unaware of quantitative convergence results
either in Hadamard spaces or spaces with curvature bounded above for the iterates of basic algorithmic templates applied to the barycenter problem or
inconsistent instances of the more general problem \eqref{eq:gen prob}.

The situation is quite different for manifolds.  Variational analysis on Riemannian manifolds and the attendant algorithmic
development has been pioneered in \cite{Udriste94, Smith94, FerOli98, Ferreira2002ProximalPA, BenFerOli10, Oliveira08,
LiLiLioYao09, CruFerLucNem, KatzCooper80, AbsMahSep2008, LiLopMar09, ColLopMarMar12, AriLeuLop14}.

\subsection{Contribution and Outline}
The main result of this paper is Theorem \ref{t:msr convergence} which provides sufficient conditions for quantitative
convergence of the iterates of proximal splitting algorithms that can be written as fixed point iterations
of the form \eqref{eq:mapping family} below.  This is stated for general $p$-uniformly convex
spaces with curvature bounded above, and summarizes several stand-alone results
that provide sufficient conditions for the main assumptions of the theorem to hold.
Corollary \ref{t:msr convergence H} specializes the main theorem
to Hadamard spaces, where many assumptions for the more general setting of Theorem \ref{t:msr convergence} can be dropped.
Section \ref{s:numerics} presents numerical demonstrations of the  theory applied the computation of Fr\'echet means in the
space of SPD matrices with the affine invariant metric (a Hadamard space) and the sphere with the usual geodesic metric (a CAT($\kappa$) metric space).
We begin in Section \ref{s:1} with necessary background.

\section{Fundamentals}\label{s:1}
We briefly introduce the basic setting and tools for our analysis here.
More detailed treatments can be found in the references.

\subsection{Uniformly Convex Metric Spaces}
Let $(G, d)$ denote an abstract metric space with domain $G$ and metric $d$.
We restrict our attention to uniquely geodesic metric spaces, namely metric spaces
for which every pair of points is joined by a unique path of minimal distance with respect to the
metric $d$, otherwise known as a {\em geodesic}.
A point $z$ on the geodesic connecting points $x$ and $y$
satisfying $d(z,x)=\tau d(x,y)$ for $\tau =d(z,x)/d(x,y)$,
is denoted by $z=(1-\tau)x\oplus \tau y$.  Geodesics in linear spaces, for instance, are just line segments.
Following \cite{RuiLopNic15}
we focus on {\em $p$-uniformly convex spaces with parameter $c$}:
for $p\in (1,\infty)$,  a metric space $(G, d)$ is $p$-uniformly convex with constant
$c>0$  whenever it is a uniquely geodesic metric space, and
\begin{equation*}\label{e:p-ucvx}
\begin{aligned}
&(\forall \tau \in [0,1])(\forall x,y,z\in G)\\
& d(z, (1-\tau )x\oplus \tau y)^p\leq (1-\tau )d(z,x)^p+\tau d(z,y)^p - \tfrac{c}{2}\tau (1-\tau )d(x,y)^p.
\end{aligned}
\end{equation*}
Examples of $p$-uniformly convex spaces are $L^p$ spaces, CAT(0) spaces ($p=c=2$),
Hadamard spaces (complete CAT$(0)$ spaces), Hilbert spaces (linear Hadamard spaces).
Of particular interest are CAT($\kappa$) spaces since these serve as the model space for
applications on manifolds with curvature bounded above.  Ohta \cite{Ohta07}
gives the following relation between CAT($\kappa$)
spaces and $p$-uniformly convex spaces that is a very useful tool for quantifying the
constants involved in characterizing the space.
\begin{lemma}[\cite{Ohta07}, Proposition 3.1]\label{t:CATkappa-pucvx}
  For any CAT$(\kappa)$ space
  $(G, d)$ and any point $\xbar\in G$, for all $\delta\in (0, \pi/(4\sqrt{\kappa}))$ the subspace
  $\paren{\Ball_\delta(\xbar), d}$ is a $p$-uniformly
  convex space with constants $p=2$ and $c_\delta = 4\delta\sqrt{\kappa}\tan\paren{\pi/2 - 2\delta\sqrt{\kappa}}$.
\end{lemma}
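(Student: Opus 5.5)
The plan is to reduce to the model space $M^2_\kappa$ by CAT($\kappa$) comparison and then verify the $2$-uniform convexity inequality there by a one-variable calculus argument. After rescaling the metric by $\sqrt{\kappa}$ we may assume $\kappa=1$. Then $\delta\in(0,\pi/4)$, and the claimed constant becomes $c_\delta=4\delta\tan(\pi/2-2\delta)$, which is invariant under this rescaling since the inequality is homogeneous of degree $2$ in $d$.

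First, note that for $\delta<\pi/4$ the ball $\Ball_\delta(\xbar)$ has diameter less than $\pi/2<\pi$. Balls of radius below $\pi/2$ are geodesically convex in CAT(1) spaces, so $\Ball_\delta(\xbar)$ is uniquely geodesic and geodesics between its points stay in it.

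Next, fix $x,y,z\in\Ball_\delta(\xbar)$ and $\tau\in[0,1]$, and set $m=(1-\tau)x\oplus\tau y$. Take the comparison triangle $\bar x,\bar y,\bar z$ in the unit sphere $S^2$; it exists since the perimeter is below $6\delta<2\pi$. Let $\bar m$ be the comparison point of $m$. The CAT(1) inequality gives $d(z,m)\le d_{S^2}(\bar z,\bar m)$, while all other distances in the target inequality are equal to their comparison distances. It therefore suffices to prove the inequality on $S^2$ for triangles with all sides at most $2\delta$. One subtlety is that the comparison triangle need not lie in a spherical ball of radius $\delta$. All that the spherical argument uses, however, is that the pairwise distances are at most $2\delta<\pi/2$, so this causes no problem.

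On the sphere, fix $\bar z$ and set $D=2\delta$. Parametrize the geodesic $\gamma(t)$ from $\bar x$ to $\bar y$ with constant speed $\ell=d(\bar x,\bar y)$, and define $g(t)=d(\bar z,\gamma(t))^2$. The target inequality $g(\tau)\le(1-\tau)g(0)+\tau g(1)-\tfrac{c}{2}\tau(1-\tau)\ell^2$ follows from the bound $g''\ge c\,\ell^2$ on $[0,1]$: the function $g-\tfrac c2\ell^2t^2$ is then convex, and its chord inequality is exactly the claim. With $r=d(\bar z,\gamma(t))$, the standard Hessian of $\tfrac12 r^2$ on $S^2$ in a direction at angle $\theta$ to the radial direction is $\cos^2\theta+r\cot r\,\sin^2\theta\ge r\cot r$, using $r\cot r\le 1$. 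So $g''\ge 2\ell^2\, r\cot r$.

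Since $s\mapsto s\cot s$ is decreasing on $(0,\pi/2)$ and $r\le D$ by convexity of balls, we get $g''\ge 2D\cot D\,\ell^2=2\cdot 2\delta\tan(\pi/2-2\delta)\,\ell^2$. This gives $c=4\delta\tan(\pi/2-2\delta)$ (indeed a factor $2$ better, so the stated constant certainly holds). The case $r=0$ along the geodesic needs separate treatment, since $r^2$ is smooth there with Hessian $2\,\Id$.

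The main obstacle is the comparison step. The inequality involves the distance from $z$ to an interior point of the side $[x,y]$, which is exactly what CAT(1) controls. One must also justify that geodesics stay within the range $r<\pi/2$ where $r\cot r>0$; the diameter bound $2\delta<\pi/2$ handles this.
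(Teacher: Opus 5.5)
Your proof is correct, and it is essentially the argument behind the cited result: the paper gives no proof of its own, deferring to Ohta's Proposition 3.1, whose argument likewise reduces to $S^2$ by CAT(1) comparison and bounds $\bigl(d(\bar z,\gamma(t))^2\bigr)''\ge 2r\cot r\,\ell^2$ with $r\le 2\delta$. One small correction: the parenthetical ``a factor $2$ better'' is an arithmetic slip, since $2D\cot D$ with $D=2\delta$ equals $4\delta\cot(2\delta)=c_\delta$ exactly, so your bound reproduces the stated constant with no slack.
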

\noindent The key point here is that a CAT($\kappa$) space with $\kappa\geq 0$ and small enough diameter is
$p$-uniformly convex with constant $p=2$ and constant $c\in (0,2]$.  In particular, when $\kappa=0$, then
$c=2$.  For convergent fixed point iterations, this means that the regularity of the space to which the iterates
are confined asymptotically approaches a CAT($0$) space, which, despite the absence of addition or scalar multiplication,
shares a lot of structural properties that are enjoyed in linear spaces.

We focus our attention on convex functions $\map{f}{G}{(-\infty, +\infty]}$.  A function is {\em geodesically convex} if it satisfies
\begin{equation*}
\begin{aligned}
&(\forall \tau\in [0,1])(\forall x,y\in G)\\
& f((1-\tau)x\oplus\tau y)\leq (1-\tau)f(x)+\tau f(y).
 \end{aligned}
\end{equation*}
This is equivalent to the epigraph of $f$ being {\em geodesically convex}, i.e., $\epi f$ contains all
geodesics between all pairs of points $(x, \alpha_x)$ and $(y, \alpha_y)$ in the epigraph.
Since geodesic convexity is the only kind of convexity considered here, we will omit the qualifier ``geodesic''.
A function is {\em strongly convex with constant $\mu$} when
\begin{equation*}
\begin{aligned}
&(\forall \tau\in [0,1])(\forall x,y\in G)\\
& f((1-\tau)x\oplus\tau y)\leq (1-\tau)f(x)+\tau f(y) - \tfrac{\mu}{2}(1-\tau)\tau d(x,y)^p.
 \end{aligned}
\end{equation*}

A set-valued mapping $T$ defined on a subset $D\subset G$ to collections of points in $G$ is denoted $\mmap{T}{D}{G}$.
The image of a set $D$ under a mapping $T$ (set-valued or not), written $T(D)$,
is defined by
\begin{equation}\label{eq:image of a set}
 T(D)\equiv \bigcup_{x\in D}T(x).
\end{equation}
The {\em domain} of the mapping $T$ is the set of points in $D$ for which the image is nonempty.  Implicit in
the notation $\mmap{T}{D}{G}$ is the statement that  $T(x)$ is nonempty for all $x\in D$.
 For compositions of set-valued mappings, $\mmap{T_j}{ D_j}{G}$ ($j=1,2, \dots, m$),
 \begin{equation}\label{eq:image of a set composition m}
\begin{aligned}
 T_m\circ \cdots \circ T_2\circ T_1( D_1)&\equiv
 \bigcup_{x_0\in D_1}\paren{\bigcup_{x_1\in T_1(x_0)}\cdots\paren{\bigcup_{x_{m-1}\in T_{m-1}(x_{m-2})} T_m(x_{m-1})}}.
\end{aligned}
\end{equation}
The composition is said to be {\em well-defined} on $D_1$ when it is nowhere empty on $D_1$ and
the images of the ``inner'' mappings are subsets of the domains of definition of the ``outer'' mappings:
$T_{j-1}( D_{j-1})\subset  D_{j}$
for $j\geq 2$.

Key tools for analyzing the behavior of a mapping $\mmap{T}{ D}{G}$ in $p$-uniformly convex spaces
are the following four-point quantities:
For any $x, y, u, v \in G$, we define
\begin{equation}\label{eq:delta}
\begin{aligned}
\Delta^{(p,c)}(x,y,u,v) \equiv  \tfrac{c}{4} \left( d(x, v)^p + d(y, u)^p - d(x, u)^p - d(y, v)^p \right),
\end{aligned}
\end{equation}
and
\begin{equation}\label{eq:psi}
\begin{aligned}
&\psi^{(p,c)}(x,y,u,v) \equiv  \\
&\qquad \tfrac{c}{2} \left( d(x, u)^p + d(y, v)^p + d(u, v)^p + d(x, y)^p - d(y, u)^p - d(x, v)^p \right).
\end{aligned}
\end{equation}
When $u\in T(x)$ and $v\in T(y)$, the mapping $\psi^{(p,c)}$ is referred to as the \emph{transport discrepancy},
introduced first in \cite{BLL}.
We will later show that when $p=c=2$ (e.g., in CAT(0) spaces), $\psi^{(2,2)}(x,y,u,v) \ge 0$.  In a
Hilbert space, the set of distinct quadruplets $(x,y,u,v)$ where $\psi^{(2,2)}(x,y,u,v) = 0$ characterizes the
set of parallelograms.  These objects are key to identifying ``good'' behavior of the mapping $T$ in the space $(G, d)$.
The four-point function defined in \eqref{eq:delta} is a type of parallelogram divergence developed by Nikolaev in \cite{Nikolaev90} and
used in Berg and Nikolaev \cite{Berg08} in the setting of spaces with nonpositive curvature ($CAT(0)$ space).  This was also used
in \cite{Schoetz19} to obtain rates of convergence of the {\em sample barycenter} (i.e. the solution to the problem we are studying)
to the Fr\'echet mean of a random variable over a set (i.e. the limit as the number of points $m$
in the discrete barycenter problem \eqref{eq:frechet mean} goes to infinity).

\subsection{\texorpdfstring{$\alpha$-firmly Nonexpansive Mappings}{alpha-firmly Nonexpansive Mappings}}\label{s:Metric}
Classical contractivity (the Lipschitz continuity with constant $q<1$) is a common assumption that fails
for many, if not most of the successful numerical methods in optimization.  In linear spaces the
assumption that holds generically for mappings derived from {\em convex} functions is {\em nonexpansivity}
(Lipschitz continuity with constant $L=1$)
and {\em firm nonexpansivity} (called {\em firmly contractive} mappings in \cite{Browder67}).
In \cite[Definition 2.2]{LukTamTha18} {\em almost} firmly nonexpansive mappings were proposed to
characterize mappings derived from {\em nonconvex} functions in Euclidean spaces.  In \cite{BLL}
this was extended to $p$-uniformly convex metric spaces and used to develop a convergence theory for
proximal point iterations for convex minimization in Hadamard spaces. The technology for working with
slightly expansive mappings  in \cite{LukTamTha18} was applied to proximal point iterations
for convex minimization in spaces with {\em positive curvature} in \cite{LauLuk21}, where
prox mappings of convex functions are {\em expansive}.

In the next definition, we use the {\em (pointwise) almost $\alpha$-firmly nonexpansive} property,
a more flexible notion that still links each step to the space's geometry through the four-point transport discrepancy $\psi^{(p,c)}$.
\begin{definition}[Definition 5, \cite{LauLuk21}]\label{d:pafne}
 Let $(G, d)$ be a $p$-uniformly convex metric space with constant $c$ and let $D\subset G$.
 \begin{enumerate}[(i)]
  \item The mapping $\mmap{T}{D}{G}$
 is {\em pointwise almost nonexpansive at $y\in D$ on $D$ with violation $\epsilon\geq 0$}
 whenever
\begin{equation}
\label{eq:pane}
\begin{aligned}
\exists \epsilon \geq 0~:~ &\forall x\in D, \forall x_+\in T(x),\text{ and } \forall y_+\in T(y),\\
& d(x_+,y_+)^p\leq (1+\epsilon)d(x,y)^p.
\end{aligned}
\end{equation}
The smallest $\epsilon$
for which \eqref{eq:pane} holds is called the {\em violation}.
If \eqref{eq:pane} holds with $\epsilon=0$, then
$T$ is pointwise nonexpansive at $y\in D$ on $D$.
If \eqref{eq:pane} holds at all $y\in D$ then $T$ is said to be (almost) nonexpansive on $D$.
If $D=G$ the mapping
$T$ is simply said to be (almost) nonexpansive.  If $D\supset \Fix T\neq\emptyset$ and
\eqref{eq:pane} holds at all $y\in \Fix T$ with the same violation, then $T$ is
said to be {\em almost quasi nonexpansive}.
\item  The operator $\mmap{T}{D}{G}$
is {\em pointwise almost $\alpha$-firmly nonexpansive at $y\in D$ on $D$
	with violation $\epsilon\geq 0$}
whenever
\begin{equation}
\label{eq:pafne}
\begin{aligned}
\exists \alpha\in(0,1), \epsilon \geq 0: \forall x\in D_\epsilon(y), \forall x_+\in T(x),\text{ and } \forall y_+\in T(y),\\
\quad  d(x_+,y_+)^p\leq (1+\epsilon)d(x,y)^p-\tfrac{1-\alpha}{\alpha}\psi^{(p,c)}(x,y, x_+,y_+)\quad
\end{aligned}
\end{equation}
If \eqref{eq:pafne} holds with $\epsilon=0$, then
$T$ is pointwise $\alpha$-firmly nonexpansive at $y\in D\subset G$ on $D$.
If \eqref{eq:pafne} holds at all $y\in D$ with the same constant $\alpha$,
then $T$ is said to be (almost) $\alpha$-firmly nonexpansive on $D$.  If $D=G$ the mapping
$T$ is simply said to be (almost) $\alpha$-firmly nonexpansive.  If $D\supset \Fix T\neq\emptyset$ and
\eqref{eq:pafne} holds at all $y\in \Fix T$ with the same constant $\alpha$ then $T$ is
said to be almost quasi $\alpha$-firmly nonexpansive.

\item  The mapping $T$ is said to be {\em pointwise asymptotically  $\alpha$-firmly
	nonexpansive at $y$ with constant $\alpha<1$} whenever
\begin{equation}
\label{eq:asymp-pafne}
\begin{aligned}
\forall \epsilon>0, \exists D_\epsilon(y)\subset G: \forall x\in D_\epsilon(y), \forall x_+\in T(x),\text{ and } \forall y_+\in T(y),
\\
d(x_+,y_+)^p\leq (1+\epsilon)d(x,y)^p-\tfrac{1-\alpha}{\alpha}\psi^{(p,c)}(x,y,x_+,y_+)
\end{aligned}
\end{equation}
  where $D_\epsilon(y)$ is a neighborhood of $y$ in $D$.
 \end{enumerate}
\end{definition}
In what follows, we use the abbreviations \emph{pointwise a$\alpha$--fne} for
pointwise almost $\alpha$-firmly nonexpansive and \emph{pointwise $\alpha$-fne}
for pointwise $\alpha$-firmly nonexpansive mappings. The next propositions provide characterizations of these mappings.
The proofs of these statements are straight-forward and can be found in the extant literature.
\begin{proposition}[pointwise single-valued]
\label{t:single-valued pane}
Let $(G,d)$ be a $p$-uniformly convex
metric space with constant $c > 0$. Any mapping $\mmap{T}{D}{G} (D \subset G)$ that is
pointwise almost nonexpansive at $y\in D$ is single-valued there. Likewise, if $T$
is pointwise almost $\alpha$-fne at $y \in D$, then it is single-valued there.
\end{proposition}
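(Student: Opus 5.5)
The plan is to take the point $x$ in the defining inequalities equal to $y$ itself, and then compare two arbitrary images $y_+, y_+'\in T(y)$. This is allowed because $y\in D$, and in the $\alpha$-fne case because $D_\epsilon(y)$ is a neighborhood of $y$ in $D$ and so contains $y$.

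\emph{Pointwise almost nonexpansive case.} Let $y_+, y_+'\in T(y)$. Apply \eqref{eq:pane} with $x=y$, $x_+=y_+'$, and the given $y_+$. This gives
\begin{equation*}
d(y_+',y_+)^p\leq (1+\epsilon)\,d(y,y)^p = 0 .
\end{equation*}
Hence $y_+'=y_+$, so $T(y)$ is a singleton. Recall that $T(y)\neq\emptyset$ because $y\in D$.

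\emph{Pointwise almost $\alpha$-fne case.} Again take $x=y$, $x_+=y_+'$ and $y_+$ in \eqref{eq:pafne}, and evaluate the transport discrepancy at the quadruple $(y,y,y_+',y_+)$. Writing $a=d(y,y_+')$, $b=d(y,y_+)$ and $e=d(y_+',y_+)$, definition \eqref{eq:psi} gives
\begin{equation*}
\psi^{(p,c)}(y,y,y_+',y_+)=\tfrac{c}{2}\paren{a^p+b^p+e^p+0-a^p-b^p}=\tfrac{c}{2}\,e^p .
\end{equation*}
The terms $d(x,u)^p$ and $d(y,u)^p$ cancel, as do $d(y,v)^p$ and $d(x,v)^p$. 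Substituting into \eqref{eq:pafne} yields
\begin{equation*}
e^p\leq (1+\epsilon)\cdot 0-\tfrac{1-\alpha}{\alpha}\tfrac{c}{2}\,e^p .
\end{equation*}
Equivalently, $\paren{1+\tfrac{(1-\alpha)c}{2\alpha}}e^p\leq 0$. Since $\alpha\in(0,1)$ and $c>0$, the coefficient is positive. Therefore $e=0$, that is, $y_+'=y_+$.

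The only point needing care is the legitimacy of the choice $x=y$. For (i) this is immediate from $y\in D$. For (ii) I would note that the neighborhood $D_\epsilon(y)$ contains $y$. There is no real obstacle beyond the bookkeeping of the cancellation in $\psi^{(p,c)}$.
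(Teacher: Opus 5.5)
Your proof is correct: setting $x=y$ gives $d(y_+',y_+)^p\le 0$ in the almost nonexpansive case, and your computation $\psi^{(p,c)}(y,y,y_+',y_+)=\tfrac{c}{2}d(y_+',y_+)^p\ge 0$ makes the a$\alpha$-fne case follow the same way. The paper does not prove this itself (it calls these statements straightforward and defers to the literature, e.g.\ \cite{LauLuk21}); setting $x=y$ is the evident argument, and your remarks that $y$ belongs to the quantified domain and that $T(y)\neq\emptyset$ cover the only points that need checking.
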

\noindent By the above proposition, whenever a mapping is a$\alpha$-fne at every point in some domain, then it
is single-valued and the notation $T(x)$ is unambiguous.
 \begin{proposition}[characterizations - Proposition 6, \cite{LauLuk21}]
\label{t:properties pafne}
Let  $(G, d)$ be a p-uniformly convex space with constant $c>0$ and
let $\mmap{T}{D}{G}$ for $D\subset G$.
\begin{enumerate}[(i)]
\item\label{t:properties pafne ii}
If $y\in\Fix T$,
\begin{equation}\label{e:psi-Fix T}
\forall x_+\in T(x), \quad \psi^{(p,c)}(x, y,x_+,y)=\tfrac{c}{2}d(x_+, x)^p.
\end{equation}
Also, for fixed $y\in\Fix T$, the function $\psi^{(p,c)}$ satisfies
$$
\psi^{(p,c)}(x,y,x_+,y)\ge 0
\qquad\text{for all } x\in D,
$$
and
$$
\psi^{(p,c)}(x,y,x_+,y)=0
\quad\Longleftrightarrow\quad
x\in\Fix T.
$$\item\label{t:properties pafne iii}  Let $y\in \Fix T$.  $T$ is  pointwise a$\alpha$-fne at $y$
on $D$ with violation $\epsilon\geq 0$ if and only if
\begin{equation}\label{e:P1}
\exists \alpha\in(0,1):\quad 	d(Tx,y)^p\leq (1+\epsilon)d(x,y)^p - \tfrac{1-\alpha}{\alpha}\tfrac{c}{2}d(Tx,x)^p\quad
	\forall x\in D.
\end{equation}
In particular, $T$ is almost quasi $\alpha$-fne on $D$ whenever $T$ possesses fixed points and
\eqref{e:P1} holds at all $y\in \Fix T$ with the same constant $\alpha\in (0,1)$ and violation $\epsilon$.
\item\label{t:properties pafne iv}  If $T$ is pointwise a$\alpha$-fne at $y\in\Fix T$ on $D$ with
constant $\underline\alpha\in(0,1)$ and violation $\epsilon$, then it is pointwise a$\alpha$-fne
at $y$ with the same violation on $D$ for all
$\alpha\in[\underline\alpha,1]$.   In particular, if $T$ is pointwise a$\alpha$-fne at $y\in\Fix T$ on D,
then it is pointwise almost nonexpansive at $y$ with violation $\epsilon$ on D.
\end{enumerate}
\end{proposition}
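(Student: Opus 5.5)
The proof is a direct computation from the definition of $\psi^{(p,c)}$ in \eqref{eq:psi}, followed by substitution into \eqref{eq:pafne} and \eqref{eq:pane}. I would take the three parts of Proposition \ref{t:properties pafne} in order.

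For (i), fix $y\in\Fix T$, so $y_+=y$ is admissible in $T(y)$; by Proposition \ref{t:single-valued pane} it is the only choice once $T$ is pointwise a$\alpha$-fne there, but (i) does not need that. Substituting $u=x_+$ and $v=y$ into \eqref{eq:psi} gives
\[
\psi^{(p,c)}(x,y,x_+,y)=\tfrac{c}{2}\paren{d(x,x_+)^p+d(y,y)^p+d(x_+,y)^p+d(x,y)^p-d(y,x_+)^p-d(x,y)^p}.
\]
The second term vanishes, the third cancels the fifth, and the fourth cancels the sixth, leaving $\tfrac{c}{2}d(x_+,x)^p$. This is \eqref{e:psi-Fix T}. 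Nonnegativity follows at once since $c>0$. Moreover, $\psi=0$ holds iff $x_+=x$, which (for every $x_+\in T(x)$, or for the single value) means $x\in T(x)$, i.e.\ $x\in\Fix T$.

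For (ii), I substitute \eqref{e:psi-Fix T} into \eqref{eq:pafne} with $y_+=y$. This turns \eqref{eq:pafne} literally into \eqref{e:P1}, which gives the ``only if'' direction. For the converse, \eqref{e:P1} is assumed for every $x_+\in T(x)$. The one subtle point is that \eqref{eq:pafne} quantifies over all $y_+\in T(y)$, not just $y_+=y$, so I must show $T(y)=\{y\}$. Taking $x=y$ in \eqref{e:P1} gives, for any $y_+\in T(y)$,
\[
d(y_+,y)^p\le -\tfrac{1-\alpha}{\alpha}\tfrac{c}{2}d(y_+,y)^p,
\]
so $d(y_+,y)=0$. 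Hence the quantification over $y_+$ is harmless and the two statements coincide. The ``in particular'' clause is just this equivalence applied at every $y\in\Fix T$ with common constants.

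For (iii), the map $\alpha\mapsto(1-\alpha)/\alpha$ is decreasing on $(0,1]$. By (i), $\psi\ge0$ when $y\in\Fix T$, so the subtracted term in \eqref{e:P1} can only shrink as $\alpha$ increases, and the inequality persists for all $\alpha\in[\underline\alpha,1]$. At $\alpha=1$ the subtracted term is zero, and together with $T(y)=\{y\}$ this is exactly \eqref{eq:pane}.

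I expect no real obstacle. The only care point is the single-valuedness at $y$ in the converse of (ii), which the choice $x=y$ settles. I would also note that $\alpha=1$ lies outside the open interval in the definition, so the nonexpansive case is read off directly from \eqref{eq:pane}.
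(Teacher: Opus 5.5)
Your proof is correct and is the direct substitution into \eqref{eq:psi}, \eqref{eq:pafne} and \eqref{eq:pane} that the paper has in mind when it defers to \cite{LauLuk21}; it also handles the two points that need care, namely using $x=y\in\Fix T\subset D$ to get $T(y)=\{y\}$, and reading the endpoint $\alpha=1$ as almost nonexpansiveness. The one slip is the parenthetical in (i): the equivalence $\psi^{(p,c)}(x,y,x_+,y)=0\iff x\in\Fix T$ holds for \emph{some} $x_+\in T(x)$ (or when $T$ is single-valued at $x$), whereas requiring it ``for every $x_+\in T(x)$'' characterizes $T(x)=\{x\}$, which for multivalued $T$ is strictly stronger than $x\in\Fix T$.
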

%

\subsection{Abstract Quantitative Convergence}
Convergence is quantified by a positive strictly monotone function  - called a {\em gauge} - of the fixed point sequence
$(x^k)_{k\in \Nbb}$ relative to $\Fix T$.
Since the gauge of the sequence depends on the
regularity constants of the fixed point mapping $T$, this is written
$\map{\theta_{\alpha,\epsilon}}{[0,\infty)}{[0,\infty)}$,
with parameters $\alpha>0$ and $\epsilon\geq 0$, satisfying
\begin{subequations}\label{eq:theta-Gcal}
\begin{eqnarray}\label{eq:theta_tau_eps}
(i)~ \theta_{\alpha,\epsilon}(0)=0;
\quad (ii)~ 0<\theta_{\alpha,\epsilon}(t)<t ~\forall t\in(0,\tbar]
\mbox{ for some }\tbar>0,
\end{eqnarray}
and for $\alpha\in(0,1)$ fixed and $t\in (0,\overline{t}]$
\begin{equation}\label{eq:gauge}
\begin{gathered}
 \Gcal\paren{\paren{\frac{(1+\epsilon)t^p-\paren{\theta_{\alpha,\epsilon}(t)}^p}{\frac{(1-\alpha)}{\alpha}}}^{1/p}}=t\\
\iff\\
 \theta_{\alpha,\epsilon}(t) = \paren{(1+\epsilon)t^p -
 \tfrac{1-\alpha}{\alpha}\paren{\Gcal^{-1}(t)}^p}^{1/p}.
 \end{gathered}
\end{equation}
\end{subequations}
Here $p\in (1,\infty)$ is the parameter of uniform convexity of the metric space,
and $\Gcal$ is a gauge which is
used to characterize the regularity of the iterate residual $d(x^k, x^{k+1})$.
When $\Gcal$ is simply a linear gauge this becomes
\[
\Gcal(t)=\rho t\quad\iff\quad
\theta_{\alpha, \epsilon}(t)=\paren{(1+\epsilon)-\frac{1-\alpha}{\rho^p\alpha}}^{1/p}t.
\]
The conditions in \eqref{eq:theta_tau_eps} in this
case simplify to $\theta_{\alpha, \epsilon}(t)=\gamma t$, where
\begin{equation}\label{eq:theta linear}
 0< \gamma\equiv \sqrt[p]{1+\epsilon-\frac{1-\alpha}{\rho^p\alpha}}<1\quad\iff\quad
\sqrt[p]{\tfrac{1-\alpha}{(1+\epsilon)\alpha}}\leq  \rho\leq \sqrt[p]{\tfrac{1-\alpha}{\epsilon\alpha}}.
\end{equation}
%
%

\begin{assumption}[regularity assumptions]\label{ass:regularity abstr}
 Let $(G,d)$ be a $p$-uniformly convex metric space with $p\in (1,\infty)$ and constant $c>0$; let $D\subset G$ be
 boundedly compact.  For the multi-valued mapping $\mmap{T}{G}{G}$  and $S:=\Fix T\cap D$ the following assumptions hold.
  \begin{enumerate}[(a)]
  \item \label{ass:regularity 0}  (Self-mapping and Existence)
  $\mmap{T}{D}{D}$ and there is at least one $y \in S$.
  \item\label{ass:regularity 2} (Stability) There is a gauge $\Gcal$ given by \eqref{eq:gauge},
    where the function $\theta_{\alpha,\epsilon}$ satisfies \eqref{eq:theta_tau_eps},
    with
    $t_0\equiv d\paren{x^0, \Fix T\cap D}<\tbar$
    for all $x^0\in D$ and
    \begin{eqnarray}
    d(x, S)    &\le&
    \Gcal\paren{d(x, x_+)}\quad \forall x\in D.\label{e:Psi msr}
  \end{eqnarray}
\end{enumerate}
\end{assumption}
The constants $\alpha$ and $\epsilon$ in \eqref{ass:regularity 2} of Assumption \ref{ass:regularity abstr} have no context
at the moment, but in Proposition \ref{t:msr convergence abstr} below they are exactly the constant and violation of the
a$\alpha$-fne property.  For gauges of a particular form, inequality \eqref{e:Psi msr} is recognizable as an
{\em error bound} \cite{Pang97}.  The inequality \eqref{e:Psi msr} alone is not enough for convergence, but rather it quantifies
how the fixed point sequences corresponding to the mapping $T$ grow or contract.  To obtain
contraction, one of the following assumptions is required.

To set up the next assumption, let $S$ be some nonempty subset of $G$ and
let $t_0\equiv d(x^{0}, S)$.   The sequence $(x^k)_{k\in \Nbb}$ is said to be
 \emph{gauge monotone with respect to $S$ with rate $\Theta$} whenever $\Theta(t)\leq t$
 for all $t\in[0,t_0]$ and
 \begin{equation}\label{e:mu-uniform mon}
(\forall k\in \Nbb) \quad  d(x^{k+1}, S)\leq \Theta\paren{d(x^k, S)}.
 \end{equation}
 The sequence $(x^k)_{k\in \Nbb}$ is said to be
 \emph{linearly monotone with respect to $S$} with rate $c\in [0,1]$ if \eqref{e:mu-uniform mon} is
 satisfied for $\Theta(t)=c\cdot t$ for all $t\in \Rbb_+$.
A  sequence $(x^k)_{k\in \Nbb}$ that converges to a point $\xbar\in S$
is said to converge {\em gauge monotonically} whenever the sequence is
gauge monotone with respect to $S$.  The gauge of monotonicity of the sequence, while related, is not the same
as the rate of convergence of the sequence.  This is clarified below.
\begin{assumption}\label{ass:linear-sublinear conditions}
	Let $(x^k)_{k\in\Nbb}$ be a sequence on the boundedly compact metric space $(G, d)$.
	Let  $S\subset G$ be closed.
	The sequence $(x^k)_{k\in\Nbb}$ is gauge monotone
    with respect to $S$ with rate $\theta_{\alpha,\epsilon}$ satisfying \eqref{eq:theta_tau_eps} where
    $t_0\equiv d(x^0, S)<\tbar$.
    Let $\theta_{\alpha,\epsilon}^{(j)}$ denote the $j$-times composition of
$\theta_{\alpha,\epsilon}$:
$$\theta_{\alpha,\epsilon}^{(j)}(t) = \underset{j \mbox{ times}}{\underbrace{\theta_{\alpha,\epsilon}\circ\theta_{\alpha,\epsilon}\circ\cdots\circ\theta_{\alpha,\epsilon}(t)}}.$$
At least one of the following holds.
  \begin{enumerate}[(i)]
   \item\label{ass:linear-sublinear conditions i}
		The sequence $(x^k)_{k\in\Nbb}$ is Fej\'er monotone with respect to $S$, i.e.
		 \begin{equation}\label{eq:Fejer}
		  d(x^{k+1},\, y)\leq d(x^k, y)\quad \forall\, k\in\Nbb, ~\forall\, y\in S,
		 \end{equation}
          and
          \begin{equation}\label{eq:theta to zero}
            \theta_{\alpha,\epsilon}^{(k)}(t)\to 0\mbox{ as }k\to\infty\quad \forall t\in(0,\tbar).
          \end{equation}
  \item\label{ass:linear-sublinear conditions ii}
		There is a  $\delta>0$ such that
		$d(x^{k+1},\, x^k)\leq \delta d(x^k, S)$ for all $k\in\Nbb$, and
          \begin{equation}\label{eq:theta summable}
            \sum_{j=1}^\infty\theta_{\alpha,\epsilon}^{(j)}(t)<\infty\quad \forall t\in(0,\tbar).
          \end{equation}
  \end{enumerate}
\end{assumption}
Part \eqref{ass:linear-sublinear conditions ii} of Assumption \ref{ass:linear-sublinear conditions} is in some sense stronger than
part \eqref{ass:linear-sublinear conditions i}, but it does not require Fej\'er monotonicity of the sequence.  Typically
\eqref{ass:linear-sublinear conditions i} holds for {\em sublinearly} convergent sequences and \eqref{ass:linear-sublinear conditions ii} holds
for linearly or superlinearly convergent sequences.  Recall that a sequence $(x^k)_{k\in \mathbb{N}}$ is said to
{\em converge $R$-linearly} to some point $\widetilde{x}$ with rate $r\in (0,1)$ whenever
\begin{equation*}
  \exists M<\infty\quad :\, \limsup_{k \to \infty} \frac{d(x^k,\widetilde{x})}{r^k} \leq M.
\end{equation*}

\begin{proposition}[convergence rates, Theorem  16 of \cite{LauLuk21}]\label{t:msr convergence abstr}
 Let $(G,d)$ be a $p$-uniformly convex space with constant $c$; let $D\subset G$ be closed, and let
 $\mmap{T}{G}{G}$ satisfy Assumption \ref{ass:regularity abstr}\eqref{ass:regularity 0} and
 be pointwise a$\alpha$-fne at all $y\in S$ with constant $\alpha$ and violation $\epsilon$ on $D$.
Additionally, let $T$ satisfy Assumption \ref{ass:regularity abstr}\eqref{ass:regularity 2}
with the gauge $\Gcal$ where $\alpha$ and $\epsilon$ are such that $\theta_{\alpha,\epsilon}$
given implicitly by \eqref{eq:gauge} satisfies \eqref{eq:theta_tau_eps}.
Then for any $x^0\in D$, the sequence $(x^k)_{k\in\Nbb}$ defined by
$x^{k+1}\in  T(x^k)$ satisfies
\begin{equation}\label{eq:gauge convergence}
d\paren{x^{k+1},S}
\leq \theta_{\alpha,\epsilon}\paren{d\paren{x^k,S}}
\quad \forall k \in \mathbb{N}.
\end{equation}%
If, in addition,
either one of conditions of Assumption \ref{ass:linear-sublinear conditions}
are satisfied, then the sequence
$(x^k)_{k\in\Nbb}$
converges gauge monotonically  to
some $x^{*}\in S$ with rate given by $O(\theta_{\alpha,\epsilon}^{(k)})$ when
Assumption \ref{ass:linear-sublinear conditions}\eqref{ass:linear-sublinear conditions i}
holds, and with rate $O(s_k(t_0))$ when Assumption \ref{ass:linear-sublinear conditions}\eqref{ass:linear-sublinear conditions ii}
holds, where
$s_k(t)\equiv
\sum_{j=k}^\infty \theta_{\alpha,\epsilon}^{(j)}(t)$ and $t_0\equiv d(x^0,S)$.
%
%
 \end{proposition}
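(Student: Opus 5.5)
The estimate \eqref{eq:gauge convergence} comes from pairing the a$\alpha$-fne inequality at a fixed point with the error bound \eqref{e:Psi msr}. Then I pass to convergence of the whole sequence separately under each alternative of Assumption \ref{ass:linear-sublinear conditions}.

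Fix $k$ and write $x=x^k$, $x_+=x^{k+1}\in T(x)$. Because $D$ is boundedly compact and $S=\Fix T\cap D$ is closed, the distance $d(x,S)$ is attained at some $\bar y\in S$; closedness of $S$ needs a short check from the a$\alpha$-fne inequality, which gives continuity of $T$ at fixed points. Proposition \ref{t:properties pafne}\eqref{t:properties pafne iii} at $\bar y$ then gives
\[
d(x_+,\bar y)^p\le (1+\epsilon)d(x,\bar y)^p-\tfrac{1-\alpha}{\alpha}\tfrac{c}{2}d(x_+,x)^p .
\]
I absorb the factor $c/2$ into the constant of $\Gcal$, or take the normalisation implicit in \eqref{eq:gauge}. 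The error bound $t:=d(x,S)\le\Gcal(d(x,x_+))$ and monotonicity of $\Gcal$ give $d(x,x_+)\ge\Gcal^{-1}(t)$. Substituting,
\[
d(x_+,S)^p\le d(x_+,\bar y)^p\le (1+\epsilon)t^p-\tfrac{1-\alpha}{\alpha}\paren{\Gcal^{-1}(t)}^p=\theta_{\alpha,\epsilon}(t)^p,
\]
which is \eqref{eq:gauge convergence}. The self-mapping property (a) keeps $x^{k+1}\in D$. Since $\theta_{\alpha,\epsilon}(t)<t$ on $(0,\tbar]$, induction keeps $d(x^k,S)\le t_0<\tbar$, so the estimate applies at every step. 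Iterating gives $d(x^k,S)\le\theta^{(k)}_{\alpha,\epsilon}(t_0)$, using that $\theta$ is monotone, which I expect to follow from strict monotonicity of the gauge $\Gcal$.

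\emph{Case (i).} Here $d(x^k,S)\to0$ by \eqref{eq:theta to zero}. Fejér monotonicity bounds the sequence, so bounded compactness yields a cluster point $x^*$, which lies in $S$ because $S$ is closed. Fejér monotonicity makes $d(x^k,x^*)$ nonincreasing, and it is small along a subsequence, so the whole sequence converges to $x^*$. For the rate, let $y_k\in P_S(x^k)$. Fejér monotonicity gives $d(x^n,y_k)\le d(x^k,y_k)$ for all $n\ge k$. Letting $n\to\infty$ yields $d(x^*,y_k)\le d(x^k,S)$, and hence $d(x^k,x^*)\le 2d(x^k,S)\le 2\theta^{(k)}_{\alpha,\epsilon}(t_0)$.

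\emph{Case (ii).} Here $d(x^{k+1},x^k)\le\delta\,d(x^k,S)\le\delta\,\theta^{(k)}_{\alpha,\epsilon}(t_0)$. The summability condition \eqref{eq:theta summable} makes the sequence Cauchy. Completeness follows from bounded compactness, so $x^k\to x^*$, and $x^*\in S$ because $d(x^k,S)\to0$ and $S$ is closed. Summing the tail gives $d(x^k,x^*)\le\delta\, s_k(t_0)$.

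The main obstacle is the first step: the existence of a nearest point $\bar y$, closedness of $S$, and monotonicity of $\theta$ and $\Gcal^{-1}$. These are where bounded compactness and the gauge properties are really used. The remainder is standard Fejér and Cauchy bookkeeping.
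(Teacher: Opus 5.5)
Your proposal is correct and follows essentially the standard argument behind this result, which the paper does not reprove but imports from Theorem 16 of \cite{LauLuk21}: take a nearest point $\bar y\in S$, combine the fixed-point form of the a$\alpha$-fne inequality (Proposition~\ref{t:properties pafne}\eqref{t:properties pafne iii}) with the error bound \eqref{e:Psi msr}, and then carry out the Fej\'er/Cauchy bookkeeping that yields $d(x^k,x^*)\le 2\theta_{\alpha,\epsilon}^{(k)}(t_0)$, respectively $d(x^k,x^*)\le \delta\, s_k(t_0)$. Two small refinements: monotonicity of $\theta_{\alpha,\epsilon}$ does not follow from that of $\Gcal$, since its $p$-th power is a difference of two increasing functions, but it is built into the paper's convention that $\theta_{\alpha,\epsilon}$ is itself a gauge; and absorbing the factor $c/2$ into $\Gcal$ is not cosmetic, because with $\theta_{\alpha,\epsilon}$ literally as in \eqref{eq:gauge} the bound \eqref{eq:gauge convergence} can fail when $c<2$ (e.g.\ $T=(1-\tau)\Id+\tau P_L$ for a line $L\subset\Rbb^2$ with small $\tau$, with $\Rbb^2$ viewed as $2$-uniformly convex with some constant $c<2$), so your rescaled gauge gives the version of the statement that is actually true.
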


\section{Main Results}
Our focus is on algorithms that are built from compositions and convex combinations of prox mappings.
The $p$-prox mapping of a proper and lower semicontinuous function $f$ in a complete $p$-uniformly convex space is defined by
\begin{align}\label{e:prox-p}
\prox^p_{f, \lambda} (x) \equiv  \argmin_{y \in G} f(y)+ \frac{1}{p \lambda^{p-1}} d(y,x)^p \quad(\lambda>0).
\end{align}
Let $f_j$ ($j=1,\dots,m$) be proper, lsc and convex, and let
$\map{\Tbar}{G}{G}$
 denote one of the following for parameters $\lambda_j>0$ and $\tau_j\in(0,1]$ ($j=1,2,\dots, m$):
\begin{subequations}\label{eq:mapping family}
\begin{align}
\Tbar &\equiv \prox^p_{f_m,\lambda_m} \circ \dots \circ \prox^p_{f_2,\lambda_2} \circ \prox^p_{f_1,\lambda_1};
\label{eq:cyclic_prox} \\
\Tbar &\equiv (\tau_m \prox^p_{f_m,\lambda_m} \oplus (1 - \tau_m)\Id)  \circ \cdots \nonumber \\
  &\quad \circ (\tau_2 \prox^p_{f_2,\lambda_2} \oplus (1 - \tau_2)\Id)
    \circ (\tau_1 \prox^p_{f_1,\lambda_1} \oplus (1 - \tau_1)\Id).
\label{eq:cyclic_gradient_descent} 
\end{align}
\end{subequations}
As a special case of \eqref{eq:cyclic_gradient_descent} with $m=2$, $\tau_1=\tau\in(0,1]$ and $\tau_2=1$, we obtain
$$
\Tbar
= \prox^{p}_{f_2,\lambda_2}\circ\bigl((1-\tau)\Id \oplus \tau\,\prox^{p}_{f_1,\lambda_1}\bigr).
$$
Moreover, \eqref{eq:cyclic_prox} is the specialization of \eqref{eq:cyclic_gradient_descent} with $\tau_j=1$ for all $j=1,\ldots,m$,
so actually, we could treat only \eqref{eq:cyclic_gradient_descent} in our development.  We prefer to keep these separate because,
as we will see, the characterizations of regularity for the extreme case $\tau_i=1$ are different than when $\tau_j<1$.

The case $m=1$ is the classical proximal point algorithm studied in nonpositively curved spaces first in \cite{Jost1995}.
Convergence has been determined more generally in Hadamard spaces by \cite[Theorem 3.4]{Bacak14}
and in complete Alexandrov spaces with curvature bounded above and finite diameter as in Lemma \ref{t:CATkappa-pucvx} by
Ohta and P\'alfia \cite[Theorems 5.1 and 5.5]{OhtaPal15}.
Weak convergence on complete Alexandrov spaces with curvature bounded above  was also obtained in \cite{EspFer09}.

The ultimate goal here is to establish quantitative convergence of iterates of the above mappings, together with an explicit rate.  Beyond
obtaining rates of convergence of iterates, a small but important difference with previous convergence results is the absence of
restrictions on the step sizes $\lambda_j$.  This provides for much simpler implementations.  The cost of this simplicity
is that the fixed points of the algorithms are no longer solutions to \eqref{eq:gen prob}.  But as we show in the numerical demonstration,
the distance of the algorithm fixed point to solutions to \eqref{eq:gen prob} can be estimated and controlled by the step sizes $\lambda_j$.
Depending on the model space, whether Euclidean, CAT($0$) or CAT($\kappa$), the three conditions on the fixed point mapping
in Proposition \ref{t:msr convergence abstr} hold with varying degrees of ease.  We start with the most challenging model space,
spaces with curvature bounded above, and move from there to more restrictive settings.

For prox mappings of convex functions, the assumption in Proposition \ref{t:msr convergence abstr} that $T$ is
a$\alpha$-fne at fixed points
will be refined considerably.  How the regularity of individual prox mappings plays into the regularity of the composite mappings
given by one of \eqref{eq:mapping family} will be developed later and will require the following more highly resolved assumptions.
\begin{assumption}[general regularity assumptions: compositions]\label{ass:regularity gen metric}
Let $D_j\subset G$ for $j=1,2,\dots,m$, and let $\mmap{T_j}{D_j}{G}$.
The composite mapping
$\Tbar\equiv T_m\circ \cdots\circ T_2\circ T_1$ is well-defined on $D_1$ (see \eqref{eq:image of a set composition m}),
and possesses fixed points in $D_1$, and each mapping $T_j$
is a$\alpha$-fne at any $y_j\in T_{j-1}\circ \cdots\circ T_2\circ T_1(\Fix \Tbar\cap D_1)$
with constant $\alpha_j\in (0,1)$
and violation $\epsilon_j\geq 0$ on $D_j\supset T_{j-1}\circ \cdots\circ T_2\circ T_1(D_1)$.
Moreover,
 \begin{equation}
\label{eq:afne composition}
\begin{aligned}
&\exists \alphabar\in (0,1) ~:~ \forall x_0\in D_1, \forall x_{j}\in T_j(x_{j-1})\quad (j=1,2,\dots, m),\\
&\qquad\qquad \frac{1-\alphabar}{\alphabar}\psi^{(p,c)}(x_0,y_0, x_m , y_m)\leq
\sum_{j=1}^m\paren{1+\epsilonhat_{j}} \frac{1-\alpha_{j}}{\alpha_{j}}\psi^{(p,c)}(x_{j-1}, y_{j-1}, x_j , y_j),
\end{aligned}
\end{equation}
where $\epsilonhat_j\equiv \paren{\prod_{i=j+1}^{m}\paren{1+\epsilon_{i}}}-1$
and $\psi^{(p,c)}$ defined by \eqref{eq:psi}.
\end{assumption}

\begin{assumption}[general regularity assumptions: convex combinations]\label{ass:regularity averaged}
For $D\subset G$,  $\mmap{T}{D}{G}$ and $\tau\in [0,1]$, define
$x_\tau \equiv (1-\tau)x\oplus \tau x_+$ for arbitrary $x_+\in T(x)$ and similarly for $y_\tau$ for $x,y\in D$.
Then
\begin{equation}\label{eq:Smile2 prox}
\begin{aligned}
\exists \alpha_\tau\in [0,1)~:\qquad\qquad&\\
\tfrac{1-\alpha_\tau}{\alpha_\tau}\psi^{(p,c)}(x, y, x_\tau, y_\tau)
&\leq \tau^2\tfrac{1-\alpha_c}{\alpha_c}\psi^{(p,c)}(x, y, x_+, y_+)\\
&\qquad  +2(1-\tau)\tau d(x, y)^p \\
  &\qquad  -\, 2(1-\tau)\tau \Delta^{(p,c)}(x, y, x_+, y_+)\\
  &\qquad -\, \tfrac{2-c}{2}(1-\tau)\tau\paren{d(y, x_+)^p+d(x, y_+)^p},
\end{aligned}
\end{equation}
where $\alpha_c$ is given by
\begin{equation}\label{e:alpha_c}
 \alpha_c = \frac{c(c-1)}{2+c(c-1)}.
\end{equation}
\end{assumption}

Assumptions \ref{ass:regularity gen metric} and \ref{ass:regularity averaged} ensure that
pointwise a$\alpha$-fne behavior is preserved under compositions
and convex combinations. Together with the Assumption
\ref{ass:regularity abstr}, this permits the application of Proposition
\ref{t:msr convergence abstr} to the specific mappings in \eqref{eq:mapping family},
yielding the convergence results stated below.

\begin{theorem}[quantitative convergence: cyclic/relaxed proximal point]\label{t:msr convergence}
 Let $(G,d)$ be a $p$-uniformly convex space with $p\in (1,\infty)$ and constant $c\in(3/2,2]$, and let $D\subset G$ be closed.
 Let $\map{f_j}{G}{\Rbb}$ ($j=1,\dots,m$) be proper, lsc and convex and $\lambda_j>0$.

 \begin{enumerate}[(i)]
\item\label{thm: cyclic proximal point i} (prox mappings are a$\alpha$-fne) For all $j$,
$T_j\equiv\prox_{f_j, \lambda_j}^p$ is a$\alpha$-fne on $G$ with constant and violation given by
 \begin{equation}\label{e:alpha-epsilon_c}
	\alpha_{c} \equiv \tfrac{c(c-1)}{c(c-1)+2}\quad\mbox{and violation }\quad \epsilon_c\equiv \tfrac{2-c}{c-1}.
\end{equation}

\item\label{thm: cyclic proximal point ii} (relaxed prox mappings are a$\alpha$-fne). If for all $j$,
$T_{\tau_j, j}\equiv(1-\tau_j)\Id\oplus\tau T_j$ for $T_j\equiv \prox_{f_j, \lambda_j}^p$
satisfies Assumption \ref{ass:regularity averaged} with constant $\alpha_{\tau_j}$, then
$T_{\tau_j, j}$ is a$\alpha$-fne with constant  $\alpha_{\tau_j}$ and violation given by
\begin{equation}\label{e:alpha-epsilon_c 2}
  \epsilon_{\tau_j}\equiv \tau_j^2\epsilon_c
\end{equation}
for $\epsilon_c$ given by \eqref{e:alpha-epsilon_c}.

\item \label{thm: cyclic proximal point iii} ($\Tbar$ is a$\alpha$-fne).
Let $\Tbar$ be given by one of the mappings in \eqref{eq:mapping family} and satisfy
Assumption \ref{ass:regularity  abstr}\eqref{ass:regularity 0} with
$S\equiv \Fix \Tbar\cap D$.
  \begin{enumerate}[(a)]
    \item \label{thm: cyclic proximal point a} For $\Tbar$ given by \eqref{eq:cyclic_prox} suppose that Assumption \ref{ass:regularity gen metric}
    holds on $D=D_1$ with the constant $\alphabar$ for $\epsilon_j=\epsilon_c$ and $\alpha_j=\alpha_c$ in that assumption.
    Define $\epsilonbar=\paren{1+\epsilon_c}^m-1$.

  \item \label{thm: cyclic proximal point iiib} For $\Tbar$ given by \eqref{eq:cyclic_gradient_descent} suppose that
  for all $j$ the mapping $T_{\tau, j}$
  satisfies Assumption \ref{ass:regularity averaged} with constant $\alpha_{\tau_j}$. Let
  Assumption \ref{ass:regularity gen metric} hold on $D=D_1$ with the constant $\alphabar$ for $\epsilon_j=\epsilon_{\tau_j}$
  and $\alpha_j = \alpha_{\tau_j}$ in that assumption.  Define $\epsilonbar = \prod_{j=1}^m (1+\tau_j^2\epsilon_{c})-1$.
  \end{enumerate}
In either case above, $\Tbar$ is a$\alpha$-fne on $D$ with respective constants $\epsilonbar\leq 1$ and $\alphabar\in (0,1)$.
\item \label{thm: cyclic proximal point iv} (general convergence).
In the setting of part \eqref{thm: cyclic proximal point iii}, suppose further that Assumption \ref{ass:regularity abstr}\eqref{ass:regularity 2}
holds for $\Tbar$
with the gauge $\Gcal$ where $\alphabar$ and $\epsilonbar$ are such that $\theta_{\alphabar,\epsilonbar}$
given implicitly by \eqref{eq:gauge} satisfies \eqref{eq:theta_tau_eps}.
Then for any $x^0\in D$, the sequence $(x^k)_{k\in\Nbb}$ defined by
$x^{k+1}\in  \Tbar(x^k)$ satisfies
\begin{equation}\label{eq:gauge convergencebar}
d\paren{x^{k+1},S}
\leq \theta_{\alphabar,\epsilonbar}\paren{d\paren{x^k,S}}
\quad \forall k \in \mathbb{N}.
\end{equation}%
If, in addition,
either one of conditions of Assumption \ref{ass:linear-sublinear conditions}
are satisfied, then the sequence
$(x^k)_{k\in\Nbb}$
converges to
some $x^{*}\in S$ with rate given by $O\paren{\theta_{\alphabar,\epsilonbar}^{(k)}}$ when
Assumption \ref{ass:linear-sublinear conditions}\eqref{ass:linear-sublinear conditions i}
holds, and with rate $O(s_k(t_0))$ when Assumption \ref{ass:linear-sublinear conditions}\eqref{ass:linear-sublinear conditions ii}
holds, where
$s_k(t)\equiv
\sum_{j=k}^\infty \theta_{\alphabar,\epsilonbar}^{(j)}(t)$ and $t_0\equiv d(x^0,S)$.
\item\label{thm: cyclic proximal point linear} (linear convergence). In the setting of part \eqref{thm: cyclic proximal point iii}
suppose that Assumption \ref{ass:regularity abstr}\eqref{ass:regularity 2}
holds for $\Tbar$
with linear gauge $\Gcal(t)\equiv \rho\cdot t$ where the scalar $\rho$ satisfies \eqref{eq:theta linear} for
$\alphabar$ and $\epsilonbar$.
Then, for any $x^0\in D$, any sequence $(x^k)_{k\in\Nbb}$ defined by
$x^{k+1}\in  T(x^k)$ is $R$-linearly convergent to a point in $S$ with
rate constant bounded above by $\gamma=\sqrt[p]{1+\epsilonbar-\frac{1-\alphabar}{\alphabar\rho^p}}$ and
asymptotic rate constant  $\gamma=\sqrt[p]{1-\frac{1-\alphabar }{\alphabar \rho^p}}$.

If $\map{f_j}{G}{\Rbb}$ ($j=1,\dots,m$) is strongly convex with constant $\mu\geq (2-c)/(p\lambda^{p-1})$,
then given any initial point $x_0\in D$, the sequence $x^k\to x^*\in \Fix \Tbar\cap D$  with
rate constant bounded above by $\gamma=\sqrt[p]{1-\frac{1-\alphabar }{\alphabar \rho^p}}$.
\end{enumerate}
 \end{theorem}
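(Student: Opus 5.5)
The theorem is a chain of reductions to Proposition \ref{t:msr convergence abstr}, and I would prove its parts in order.

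For part \eqref{thm: cyclic proximal point i}, fix $x,y\in G$ and set $x_+=\prox^p_{f,\lambda}(x)$, $y_+=\prox^p_{f,\lambda}(y)$. Compare the prox objective at $x_+$ with its value at points $(1-\tau)x_+\oplus\tau y_+$ on the geodesic toward $y_+$. Convexity of $f$ and $p$-uniform convexity of $d(\cdot,x)^p$ give
\[
f(x_+)+\tfrac{1}{p\lambda^{p-1}}d(x_+,x)^p\le (1-\tau)f(x_+)+\tau f(y_+)+\tfrac{1}{p\lambda^{p-1}}\Big((1-\tau)d(x_+,x)^p+\tau d(y_+,x)^p-\tfrac c2\tau(1-\tau)d(x_+,y_+)^p\Big).
\]
Subtract the common terms, divide by $\tau$ and let $\tau\downarrow0$. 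This yields $p\lambda^{p-1}(f(x_+)-f(y_+))\le d(y_+,x)^p-d(x_+,x)^p-\tfrac c2 d(x_+,y_+)^p$. Do the same with the roles of $x$ and $y$ swapped and add the two inequalities; the $f$ terms cancel and we get
\[
c\,d(x_+,y_+)^p\le d(y_+,x)^p+d(x_+,y)^p-d(x_+,x)^p-d(y_+,y)^p .
\]
The remaining task is to rewrite this as $d(x_+,y_+)^p\le(1+\epsilon_c)d(x,y)^p-\frac{1-\alpha_c}{\alpha_c}\psi^{(p,c)}(x,y,x_+,y_+)$ with $\frac{1-\alpha_c}{\alpha_c}=\frac{2}{c(c-1)}$. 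I would multiply the inequality by $\frac1{c-1}$, expand $\psi^{(p,c)}$, and match coefficients. The mismatched terms are then absorbed by $\epsilon_c d(x,y)^p$ with $\epsilon_c=\frac{2-c}{c-1}$, using only the sign of the four-point combination. Getting these coefficients to close exactly is the \textbf{main obstacle}. If the crude matching leaves a residual term, my first attempt would be to bound it with the triangle inequality together with the restriction $c>3/2$; that restriction also guarantees $\epsilon_c<1$ and, through part \eqref{thm: cyclic proximal point iii}, $\epsilonbar\le1$ in the relevant cases.

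For part \eqref{thm: cyclic proximal point ii}, apply $p$-uniform convexity twice: once with base point $y_\tau$ to bound $d(x_\tau,y_\tau)^p$ along the geodesic $[x,x_+]$, and once along $[y,y_+]$. This bounds $d(x_\tau,y_\tau)^p$ by a combination of $(1-\tau)^2d(x,y)^p$, $\tau^2d(x_+,y_+)^p$ and the cross terms $d(x,y_+)^p$, $d(y,x_+)^p$. Insert the bound from (i) into the $\tau^2$ term; this produces $\tau^2\epsilon_c$ as the violation. The cross terms are then collected into exactly the right-hand side of \eqref{eq:Smile2 prox}, and Assumption \ref{ass:regularity averaged} converts them into $-\frac{1-\alpha_\tau}{\alpha_\tau}\psi^{(p,c)}(x,y,x_\tau,y_\tau)$.

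For part \eqref{thm: cyclic proximal point iii}, chain the one-step inequalities along $x_j=T_j(x_{j-1})$ and $y_j=T_j(y_{j-1})$. By induction, $d(x_m,y_m)^p\le\prod_j(1+\epsilon_j)d(x_0,y_0)^p-\sum_j(1+\epsilonhat_j)\frac{1-\alpha_j}{\alpha_j}\psi^{(p,c)}(x_{j-1},y_{j-1},x_j,y_j)$, and \eqref{eq:afne composition} bounds the sum below by the single term $\frac{1-\alphabar}{\alphabar}\psi^{(p,c)}(x_0,y_0,x_m,y_m)$. Here $\epsilonbar=(1+\epsilon_c)^m-1$ in case (a), and $\epsilonbar=\prod_j(1+\tau_j^2\epsilon_c)-1$ in case (b) by part (ii).

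Parts \eqref{thm: cyclic proximal point iv} and \eqref{thm: cyclic proximal point linear} then follow directly from Proposition \ref{t:msr convergence abstr} with $(\alphabar,\epsilonbar)$; for the linear case use \eqref{eq:theta linear}. To get the asymptotic rate, note that the iterates concentrate near the limit, and Lemma \ref{t:CATkappa-pucvx} lets us shrink the ball so that $c\to2$, hence $\epsilon_c\to0$ and $\epsilonbar\to0$. For the strongly convex clause, repeat the computation in (i) carrying the extra term $\tfrac{\mu}{2}\tau(1-\tau)d(x_+,y_+)^p$ from strong convexity. When $\mu\ge(2-c)/(p\lambda^{p-1})$, this term offsets the curvature deficit $2-c$, so the violation becomes zero and the rate $\gamma=\sqrt[p]{1-\frac{1-\alphabar}{\alphabar\rho^p}}$ holds globally.
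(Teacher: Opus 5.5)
Your proposal is correct and follows the paper's proof essentially step for step. The paper uses the prox monotonicity inequality of Theorem \ref{t:prox props gen} (via the optimality comparison of Lemma \ref{l:prox_char}), then the two-geodesic estimate \eqref{e:Big Thief} combined with Assumption \ref{ass:regularity averaged} as in Proposition \ref{t:cvx relaxation suff}, then the chained composition estimate of Corollary \ref{c:compositionthm}, and finally Proposition \ref{t:msr convergence abstr}, Lemma \ref{t:CATkappa-pucvx} and the strongly convex variant in Proposition \ref{prop:prox-alpha-fne}.

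The step you flag as the main obstacle in (i) closes exactly, with no residual term and no need for the triangle inequality. By \eqref{eq:delta_to_psi}, the right-hand side of your inequality equals $d(x,y)^p+d(x_+,y_+)^p-\tfrac{2}{c}\psi^{(p,c)}(x,y,x_+,y_+)$. This gives $(c-1)d(x_+,y_+)^p\le d(x,y)^p-\tfrac{2}{c}\psi^{(p,c)}(x,y,x_+,y_+)$, and dividing by $c-1$ yields $1+\epsilon_c=\tfrac{1}{c-1}$ and $\tfrac{1-\alpha_c}{\alpha_c}=\tfrac{2}{c(c-1)}$. One caveat: $c>3/2$ only gives $\epsilon_c<1$ and does not by itself yield $\epsilonbar\le1$ when $m\ge2$; the paper also leaves that claim unjustified.
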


For spaces with nonpositive curvature we show
that  neither
Assumption \ref{ass:regularity gen metric} nor Assumption \ref{ass:regularity averaged}
are needed for relaxations of prox mappings of convex functions whenever the relaxation parameter
is small enough, in particular $\tau\in(0,1/2]$.  This leads to the following specialization of
Theorem \ref{t:msr convergence} in Hadamard spaces.
\begin{corollary}[quantitative convergence in Hadamard spaces]\label{t:msr convergence H}
 Let $(G,d)$ be a Hadamard space, let $D\subset G$ be closed, and
 let $\map{f_j}{G}{\Rbb}$ ($j=1,\dots,m$) be proper, lsc and convex.
 Let $\Tbar$ be given by any of the mappings in \eqref{eq:mapping family} where, in case
\eqref{eq:cyclic_gradient_descent} $\tau_j\in(0,1/2]$ for all $j$.
Then the following hold.
 \begin{enumerate}[(i)]
\item\label{cor: hadamard cyclic proximal point i}(relaxed prox mappings are $\alpha$-fne).  $T_{\tau, j}\equiv(1-\tau)\Id\oplus\tau T_j$ for
$T_j\equiv \prox_{f_j, \lambda_j}^p$
is $\alpha$-fne with constant  $\alpha_{\tau, j} = 1-\tau_j$.
\item\label{cor: hadamard cyclic proximal point ii}($\Tbar$ is $\alpha$-fne). If $\Tbar$
satisfies Assumption \ref{ass:regularity abstr}\eqref{ass:regularity 0} on
 $D$, then $\Tbar$ is $\alpha$-fne on $D$ with constant $\alphabar$ satisfying
  \begin{equation}\label{e:alphabar comp cat0}
\sup_{x_0\in D}\frac{d(x_0, x_m)^2}%
  {d(x_0, x_m)^2+ \sum_{j=1}^m\frac{1-\alpha_{j}}{\alpha_{j}}\psi^{(2,2)}(x_{j-1}, y_{j-1}, x_j , y_j)}
  \leq \alphabar <1.
 \end{equation}
 \item\label{cor: hadamard cyclic proximal point iii gen} (general convergence).
Let Assumption \ref{ass:regularity abstr} hold in its entirety
for $\Tbar$ with the gauge $\Gcal$, where  $\epsilonbar=0$ and $\alphabar$ is such that $\theta_{\alphabar,0}$
given implicitly by \eqref{eq:gauge} satisfies \eqref{eq:theta_tau_eps}.
Then for any $x^0\in D$, the fixed point sequence $(x^k)_{k\in\Nbb}$ satisfies \eqref{eq:gauge convergencebar}.
If, in addition,
either one of conditions of Assumption \ref{ass:linear-sublinear conditions}
are satisfied, then the sequence
$(x^k)_{k\in\Nbb}$
converges to
some $x^{*}\in S$ with rate given by $O\paren{\theta_{\alphabar,0}^{(k)}}$ when
Assumption \ref{ass:linear-sublinear conditions}\eqref{ass:linear-sublinear conditions i}
holds, and with rate $O(s_k(t_0))$ when Assumption \ref{ass:linear-sublinear conditions}\eqref{ass:linear-sublinear conditions ii}
holds, where
$s_k(t)\equiv
\sum_{j=k}^\infty \theta_{\alphabar,0}^{(j)}(t)$ and $t_0\equiv d(x^0,S)$.

 \item\label{cor: hadamard cyclic proximal point iii} (linear convergence).
 In the setting of part \eqref{cor: hadamard cyclic proximal point iii gen},
let the gauge of stability be  linear, $\Gcal(t)\equiv \rho\cdot t$, where the scalar $\rho$ satisfies \eqref{eq:theta linear} for
$\epsilonbar=0$ and the given $\alphabar$.
Then, for any $x^0\in D$, any sequence $(x^k)_{k\in\Nbb}$ defined by
$x^{k+1}\in  T(x^k)$ is $R$-linearly convergent to a point in $\Fix \Tbar\cap D$ with
rate constant bounded above by $\gamma=\sqrt[p]{1-\frac{1-\alphabar}{\alphabar\rho^p}}$.
\end{enumerate}
 \end{corollary}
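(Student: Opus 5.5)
The plan is to obtain the corollary by specializing Theorem \ref{t:msr convergence} to $p=c=2$. In this case $\epsilon_c=(2-c)/(c-1)=0$ and $\alpha_c=2/4=1/2$. So every prox mapping $T_j=\prox^2_{f_j,\lambda_j}$ is $\tfrac12$-fne with no violation. By Proposition \ref{t:properties pafne}\eqref{t:properties pafne iv} it is then also $\alpha$-fne for every $\alpha\in[1/2,1)$. Consequently every violation in the theorem vanishes, $\epsilonhat_j=0$ and $\epsilonbar=0$, and the whole argument reduces to verifying the hypotheses that Theorem \ref{t:msr convergence} left as assumptions.

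For part \eqref{cor: hadamard cyclic proximal point i} I would verify Assumption \ref{ass:regularity averaged} directly with $\alpha_\tau=1-\tau$.
\begin{itemize}
\item Write $x_\tau=(1-\tau)x\oplus\tau x_+$ and $y_\tau$ likewise, and expand $\psi^{(2,2)}(x,y,x_\tau,y_\tau)$.
\item Bound each distance to a geodesic point using the CAT(0) (2-uniform convexity, $c=2$) inequality. Examples are $d(x,x_\tau)^2=\tau^2d(x,x_+)^2$ and $d(x_\tau,y)^2\le(1-\tau)d(x,y)^2+\tau d(x_+,y)^2-\tau(1-\tau)d(x,x_+)^2$.
\item The troublesome term is $d(x_\tau,y_\tau)^2$, which enters $\psi$ with a plus sign, so it needs an upper bound. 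For this I would use the CAT(0) four-point (quadrilateral) comparison along two geodesics: $d(x_\tau,y_\tau)^2\le(1-\tau)d(x,y)^2+\tau d(x_+,y_+)^2-\tau(1-\tau)\bigl(\text{terms in }d(x,y_+),d(x_+,y)\bigr)$, obtained by applying the convexity inequality twice.
\item The negative cross-distance terms $d(y,x_\tau)^2$ and $d(x,y_\tau)^2$ need lower bounds. Here I would use the reverse comparison available through the parallelogram divergence $\Delta^{(2,2)}$ (Berg–Nikolaev).
\end{itemize}
Collecting terms should give the right-hand side of \eqref{eq:Smile2 prox}, where the factor $\tfrac{2-c}{2}$ now vanishes. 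This reduces the claim to an inequality of the form $\tfrac{\tau}{1-\tau}\psi(x,y,x_\tau,y_\tau)\le\tau^2\psi(x,y,x_+,y_+)+\dots$. I would then combine it with the $\tfrac12$-fne inequality of $T_j$, namely $d(x_+,y_+)^2\le d(x,y)^2-\psi(x,y,x_+,y_+)$. The restriction $\tau\le 1/2$ is what should make the cross coefficient $2(1-\tau)\tau-\tau^2\cdot(\text{stuff})$ nonnegative, so that the $\Delta$ terms can be absorbed using $\psi^{(2,2)}\ge0$. \textbf{This bookkeeping is the main obstacle}. If the direct expansion fails, my first fallback is a Hilbert-space-style identity: in the comparison triangle, the relaxation of an $\alpha$-averaged map with $\alpha=\tfrac12$ is $\tau$-averaged, hence $\tfrac{\tau}{1}$-fne, and $\tau\le\tfrac12$ gives $\alpha_\tau\ge1-\tau$ by monotonicity in $\alpha$ (Proposition \ref{t:properties pafne}\eqref{t:properties pafne iv}). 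I would then transport this to CAT(0) through Reshetnyak's quadrilateral comparison.

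For part \eqref{cor: hadamard cyclic proximal point ii}, note that with all $\epsilon_j=0$ the composition inequality \eqref{eq:afne composition} is exactly the condition that $\alphabar$ satisfies \eqref{e:alphabar comp cat0}. The plan is:
\begin{itemize}
\item Telescope the fne inequalities $d(x_j,y_j)^2\le d(x_{j-1},y_{j-1})^2-\tfrac{1-\alpha_j}{\alpha_j}\psi(x_{j-1},y_{j-1},x_j,y_j)$ to get $d(x_m,y_m)^2\le d(x_0,y_0)^2-\sum_j\tfrac{1-\alpha_j}{\alpha_j}\psi_j$.
\item Since the definition only needs $y\in\Fix\Tbar$, Proposition \ref{t:properties pafne}\eqref{t:properties pafne ii} gives $\psi(x_0,y_0,x_m,y_m)=d(x_0,x_m)^2$.
\item Choosing $\alphabar$ as in \eqref{e:alphabar comp cat0} makes $\tfrac{1-\alphabar}{\alphabar}d(x_0,x_m)^2\le\sum_j\tfrac{1-\alpha_j}{\alpha_j}\psi_j$, which is just a rearrangement of the ratio in the supremum.
\end{itemize}
It remains to see that the supremum is below $1$ whenever the sum is comparable to $d(x_0,x_m)^2$. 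This is the content of the hypothesis in \eqref{e:alphabar comp cat0}, so the statement asserts $\alphabar$ under that condition.

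Parts \eqref{cor: hadamard cyclic proximal point iii gen} and \eqref{cor: hadamard cyclic proximal point iii} then follow by invoking Proposition \ref{t:msr convergence abstr} (equivalently Theorem \ref{t:msr convergence}\eqref{thm: cyclic proximal point iv}–\eqref{thm: cyclic proximal point linear}) with $\epsilonbar=0$. In the linear case, $\theta(t)=\gamma t$ with $\gamma=\sqrt{1-\tfrac{1-\alphabar}{\alphabar\rho^2}}<1$, so Assumption \ref{ass:linear-sublinear conditions}\eqref{ass:linear-sublinear conditions ii} holds. Its summability requirement is geometric, and the condition $d(x^{k+1},x^k)\le\delta d(x^k,S)$ follows from nonexpansiveness at fixed points, with $\delta=2$. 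This yields the $R$-linear rate.
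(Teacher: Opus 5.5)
The gap is in part (i), and it is not a matter of bookkeeping: the inequality you set out to verify is false. Your target is \eqref{eq:Smile2 prox} with $c=2$ (so $\tfrac{1-\alpha_c}{\alpha_c}=1$) and $\alpha_\tau=1-\tau$, to be used in Proposition~\ref{t:cvx relaxation suff}\eqref{t:cvx relaxation Ttilde suff}. The quantity $\psi^{(2,2)}(x,y,x_\tau,y_\tau)$ contains $-d(y,x_\tau)^2-d(x,y_\tau)^2$. Bounding it from above therefore needs \emph{lower} bounds on distances from a point to interior points of a geodesic, and CAT(0) comparison only bounds these from above. Invoking the Berg--Nikolaev inequality \eqref{e:metric CS} cannot rescue this, because the target itself fails.

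Here is a counterexample. Take the tripod $G$ with legs $A,B,C$ glued at $o$; it is a complete $\Rbb$-tree, hence a Hadamard space. Let $x\in A$, $z\in B$, $y\in C$ lie at distance $1$ from $o$, and let $f=M\,d(\cdot,z)$ with $M\lambda\ge 2$, so that $x_+=y_+=z$. Then $d(x,y)^2=\psi^{(2,2)}(x,y,z,z)=4$ and $\Delta^{(2,2)}(x,y,z,z)=0$. For $\tau\le\tfrac12$ the points $x_\tau\in A$, $y_\tau\in C$ lie at distance $1-2\tau$ from $o$, so $\psi^{(2,2)}(x,y,x_\tau,y_\tau)=16\tau^2$. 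This is four times the Euclidean value $\tau^2\psi^{(2,2)}(x,y,z,z)$. Your target becomes $\tfrac{16\tau^3}{1-\tau}\le 8\tau-4\tau^2$, which fails for $\tau$ near $\tfrac12$; at $\tau=\tfrac12$ it reads $4\le 3$.

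The Reshetnyak fallback breaks for the same sign reason. Majorization controls $d(y,x_\tau)$ and $d(x,y_\tau)$ only from above, so averagedness of the comparison figure cannot be pulled back. In this example the comparison figure is an equilateral triangle, on which the relaxation is $\tfrac{\tau}{2}$-averaged and hence $(1-\tau)$-fne for all $\tau\le\tfrac23$. In the tripod, for $\tau>\tfrac12$ one gets $x_\tau=y_\tau$ and $\psi^{(2,2)}(x,y,x_\tau,y_\tau)=4$. So $T_\tau$ fails to be $(1-\tau)$-fne for every $\tau\in(\tfrac12,1)$, and the transfer principle is invalid.

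The paper proves (i) by a different route. It uses only nonexpansiveness of $\prox^2_{f,\lambda}$ (Proposition~\ref{t:prox_cat0}) and Theorem~\ref{t:cvx comb flat}\eqref{t:cvx relaxation flat suff ane}. That theorem's sufficient condition \eqref{eq:Bitter2 0} is derived from the scaling estimate \eqref{eq:warm gun 0} together with $\tfrac{\tau^3}{1-\tau}\le(1-\tau)\tau\iff\tau\le\tfrac12$. Borrowing that step will not close your gap:
\begin{itemize}
\item the same tripod violates \eqref{eq:warm gun 0} ($16\tau^2$ against $4\tau^2$), and violates \eqref{eq:Bitter2 0} for $\tau\in(\tfrac13,\tfrac12]$;
\item nonexpansiveness alone cannot suffice: sending $u\in A$ and $v\in B$ (at distance $1$ from $o$) to the points of $C$ at distances $3$ and $1$, respectively, is nonexpansive, yet the $\tfrac12$-fne inequality for the relaxation at $\tau=\tfrac12$ reads $1\le -1$.
\end{itemize}
In the prox example only the conclusion survives: the $(1-\tau)$-fne inequality for $T_\tau$ holds for $\tau\le\tfrac12$, with equality at $\tfrac12$. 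So for $\tau$ near $\tfrac12$ a proof of (i) must work with that inequality directly. It must also use more than nonexpansiveness, e.g.\ the firm nonexpansiveness $d(x_+,y_+)^2\le\Delta^{(2,2)}(x,y,x_+,y_+)$ of the prox.

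Since (ii)--(iv) for \eqref{eq:cyclic_gradient_descent} rest on (i) for the factors, the gap propagates there. For \eqref{eq:cyclic_prox}, Proposition~\ref{t:prox_cat0} suffices.

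In (ii) your telescoping is the paper's argument (Corollary~\ref{c:compositionthm} with zero violations, as in Proposition~\ref{t:nonexp cat(0)}). However, the statement asserts that some $\alphabar<1$ satisfying \eqref{e:alphabar comp cat0} exists, so you must prove the supremum is below $1$ rather than assume it. The paper's Lemma~\ref{l:compsite condition flat} only excludes the ratio equalling $1$ at a single $x_0$, not the supremum approaching $1$.

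Parts (iii)--(iv) follow the paper. Your verification of Assumption~\ref{ass:linear-sublinear conditions}\eqref{ass:linear-sublinear conditions ii} with $\delta=2$ via quasi-nonexpansiveness is correct.
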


 To conclude this section, note that the main results rely on verifying that each mapping in \eqref{eq:mapping family}
 is pointwise a$\alpha$-fne (with the appropriate constants and violations). In Section \ref{section:proofs}
 these properties are established in detail, together with additional statements specific to nonpositively curved spaces.

 \section{Proofs of the Main Results}\label{section:proofs}
To prove Theorem \ref{t:msr convergence} and the corollary we first develop the calculus of regularity of composite and
relaxed mappings in general $p$-uniformly convex spaces.  This is subsequently specialized to spaces with nonpositive curvature
and to prox mappings in both general spaces and Hadamard spaces.  This section concludes with the proofs of the main results.
 \subsection{\texorpdfstring{Calculus of a$\alpha$-fne Operators}{Calculus of a-alpha-fne Operators}}\label{s:calculus}
 Not included in the assumptions of Theorem \ref{t:msr convergence} is the pointwise a$\alpha$-fne property of
 the composite fixed point mapping. For this we require a calculus for compositions and convex combinations of pointwise a$\alpha$-fne mappings.
\begin{proposition}[compositions, Lemma 9 of \cite{LauLuk21}]
 \label{t:compositionthm}
 Let $(G, d)$ be a $p$-uniformly convex space with constant $c>0$ and let $D\subset G$.
 Let  $\mmap{T_1}{D}{G}$ be pointwise a$\alpha$-fne at $y$ on
 $D$ with constant $\alpha_{1}$ and violation $\epsilon_{1}$
 and let $\mmap{T_2}{T_1(D)}{G}$ be pointwise a$\alpha$-fne at $y$ on
 $T_1(D)$ with constant
 $\alpha_{2}$ and violation $\epsilon_{2}$.

 If there exists a constant $\alphabar\in (0,1)$ such that
 \begin{equation}\label{eq:afne composition2}
 \begin{aligned}
&\frac{1-\alphabar}{\alphabar}\psi^{(p,c)}(x,y, x_+, y_+)\\
&\quad \leq (1+\epsilon_1)
\frac{1-\alpha_1}{\alpha_1}\psi^{(p,c)}(x,y, x_1, y_1)
 + \frac{1-\alpha_2}{\alpha_2}\psi^{(p,c)}(x_1,y_1, x_2, y_2)
\quad \forall x\in D,
\end{aligned}
\end{equation}
then the composite operator $\Tbar={T_2} \circ T_1$ is pointwise a$\alpha$-fne at
 $y$ on $D$ with violation  $\epsilonbar=\epsilon_{1}+\epsilon_{2} + \epsilon_{1}\epsilon_{2}$ with constant $\alphabar$.
\end{proposition}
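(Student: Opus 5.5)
The plan is to chain the two defining inequalities \eqref{eq:pafne}, one for $T_1$ and one for $T_2$, and then absorb the two $\psi$-penalties using hypothesis \eqref{eq:afne composition2}.

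Fix $x\in D$, $x_1\in T_1(x)$, $x_+=x_2\in T_2(x_1)$, and $y_1\in T_1(y)$, $y_+=y_2\in T_2(y_1)$. The reference point for $T_2$ is $y_1$, which is the only point consistent with how \eqref{eq:afne composition2} is written. By Proposition \ref{t:single-valued pane} these images are single-valued at the reference points, so the choices there are unambiguous.

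First apply the a$\alpha$-fne property of $T_2$ on $T_1(D)$ to the pair $(x_1,y_1)$:
\[
d(x_2,y_2)^p\le (1+\epsilon_2)d(x_1,y_1)^p-\tfrac{1-\alpha_2}{\alpha_2}\psi^{(p,c)}(x_1,y_1,x_2,y_2).
\]
Next insert the bound for $T_1$ at $(x,y)$:
\[
d(x_1,y_1)^p\le (1+\epsilon_1)d(x,y)^p-\tfrac{1-\alpha_1}{\alpha_1}\psi^{(p,c)}(x,y,x_1,y_1).
\]
Combining the two gives
\[
d(x_+,y_+)^p\le (1+\epsilon_1)(1+\epsilon_2)d(x,y)^p-(1+\epsilon_2)\tfrac{1-\alpha_1}{\alpha_1}\psi^{(p,c)}(x,y,x_1,y_1)-\tfrac{1-\alpha_2}{\alpha_2}\psi^{(p,c)}(x_1,y_1,x_2,y_2).
\]
The violation $(1+\epsilon_1)(1+\epsilon_2)-1=\epsilon_1+\epsilon_2+\epsilon_1\epsilon_2$ is exactly $\epsilonbar$.

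It remains to bound the two penalty terms together from below by $\tfrac{1-\alphabar}{\alphabar}\psi^{(p,c)}(x,y,x_+,y_+)$, and this is where \eqref{eq:afne composition2} is used. The main obstacle is a bookkeeping mismatch in the weights. The chaining naturally puts the factor $(1+\epsilon_2)$ on the $T_1$ penalty, whereas the hypothesis as printed puts $(1+\epsilon_1)$ there. The general Assumption \ref{ass:regularity gen metric}, with $\epsilonhat_j=\prod_{i>j}(1+\epsilon_i)-1$, confirms that the natural weight is the product of the later violations, here $1+\epsilon_2$.

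I will handle the mismatch as follows. The $\psi$ terms need not be nonnegative when $y$ is not a fixed point, so the factor cannot simply be dropped. I will read the hypothesis with the weight $(1+\epsilon_2)$, consistent with Assumption \ref{ass:regularity gen metric} for $m=2$. If the printed form $(1+\epsilon_1)$ is intended, I will instead note that when $y\in\Fix\Tbar$ the $T_1$ penalty is nonnegative, as in Proposition \ref{t:properties pafne}\eqref{t:properties pafne ii}. In that case the larger of the two weights dominates, and I will adjust the stated inequality accordingly.

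With the weights matched, the hypothesis states that the sum of the two penalty terms is at least $\tfrac{1-\alphabar}{\alphabar}\psi^{(p,c)}(x,y,x_+,y_+)$. Substituting this into the combined inequality yields
\[
d(x_+,y_+)^p\le(1+\epsilonbar)d(x,y)^p-\tfrac{1-\alphabar}{\alphabar}\psi^{(p,c)}(x,y,x_+,y_+).
\]
This is \eqref{eq:pafne} for $\Tbar$ at $y$ on $D$ with constant $\alphabar$ and violation $\epsilonbar$. Since $x\in D$ and the selections $x_1,x_2$ were arbitrary, this completes the argument.
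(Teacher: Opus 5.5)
Your main argument is correct, and it is the standard proof. The paper gives none itself and defers to Lemma~9 of \cite{LauLuk21}. You apply the inequality for $T_2$ at $y_1=T_1(y)$ to $x_1\in T_1(x)\subset T_1(D)$, multiply the inequality for $T_1$ at $y$ by $1+\epsilon_2>0$, add the two, and absorb $(1+\epsilon_2)\tfrac{1-\alpha_1}{\alpha_1}\psi^{(p,c)}(x,y,x_1,y_1)+\tfrac{1-\alpha_2}{\alpha_2}\psi^{(p,c)}(x_1,y_1,x_2,y_2)$ using the hypothesis.

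You also correctly diagnosed two slips in the printed statement:
\begin{itemize}
\item $T_2$ must be a$\alpha$-fne at $y_1$ rather than at $y$.
\item The weight in \eqref{eq:afne composition2} must be $1+\epsilon_2$ rather than $1+\epsilon_1$. Assumption~\ref{ass:regularity gen metric} with $m=2$ confirms this, since there $\epsilonhat_1=\epsilon_2$ and $\epsilonhat_2=0$.
\end{itemize}

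You should, however, discard the fallback branch, because it does not work.
\begin{itemize}
\item The nonnegativity in Proposition~\ref{t:properties pafne}\eqref{t:properties pafne ii} rests on the second and fourth arguments of $\psi^{(p,c)}$ coinciding, since $\psi^{(p,c)}(x,y,u,y)=\tfrac{c}{2}d(x,u)^p$. For the $T_1$ term this would require $T_1(y)=y$, and $y\in\Fix\Tbar$ does not give that.
\item So nothing controls the sign of $\psi^{(p,c)}(x,y,x_1,y_1)$. Outside the CAT(0) setting of Lemma~\ref{t:metric space properties} it can be negative, for example for suitable small quadrilaterals on the sphere.
\item The proposition is asserted at an arbitrary $y\in D$, not only at fixed points.
\item Even if that $\psi$ term were nonnegative, the printed hypothesis would imply the one you need only when $\epsilon_1\le\epsilon_2$.
\end{itemize}
That branch would therefore change the statement rather than prove it, so commit to the $1+\epsilon_2$ reading.
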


Assumption \ref{ass:regularity gen metric} generalizes \eqref{eq:afne composition2} and will be used in the following result.

\begin{corollary}[finite compositions of pointwise a$\alpha$-fne mappings]\label{c:compositionthm}
 Suppose $(G, d)$ be a $p$-uniformly convex space with constant $c>0$ and $D\subset G$.
 Let  $\mmap{T_1}{D_1}{G}$ be pointwise a$\alpha$-fne at $y$ on
 $D$ with constant $\alpha_{1}$ and violation $\epsilon_{1}$
 and let $\mmap{T_2}{T_1(D)}{G}$ be pointwise a$\alpha$-fne at $y$ on
 $T_1(D)$ with constant
 $\alpha_{2}$ and violation $\epsilon_{2}$.

 $T_1 : D_1 \rightarrow G$ where $D_1 \subset G$ and for $j = 2,3,\ldots,m$ let $T_j : D_j \rightarrow G$ for
$D_j \equiv  \{T_{j-1}x \,|\, x \in D_{j-1}\}$.
If the mappings $T_j$ are pointwise a$\alpha$-fne with constant $\alpha_j$ on $D_j (j = 1,2,\ldots, m)$ and
Assumption \ref{ass:regularity gen metric} holds,
then the composite operator $\Tbar \equiv  T_m \circ \cdots \circ T_1$ is pointwise a$\alpha$-fne on $D_1$
with constants $\alphabar \in (0,1)$ and
$\epsilonbar = \paren{\prod_{i=1}^{m}\paren{1+\epsilon_{i}}}-1$.
\end{corollary}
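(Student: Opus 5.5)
The plan is an induction on the number $m$ of mappings, with Proposition \ref{t:compositionthm} as the two-mapping building block. The one delicate point is that \eqref{eq:afne composition2} is stated for a single pair $(T_1,T_2)$, whereas Assumption \ref{ass:regularity gen metric} gives only the telescoped inequality \eqref{eq:afne composition} for the full chain. I will therefore avoid inducting on intermediate composite constants $\alpha$ and instead unroll the a$\alpha$-fne inequalities directly along the chain. Fix a reference point $y_0=y$ at which all mappings are pointwise a$\alpha$-fne. Set $y_j\equiv T_j(y_{j-1})$, which is single-valued by Proposition \ref{t:single-valued pane}. Take arbitrary $x_0\in D_1$ and $x_j\in T_j(x_{j-1})$. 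Then $x_{j-1}\in D_j$ and $y_{j-1}$ is a point at which $T_j$ is pointwise a$\alpha$-fne, as required by the assumption.

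For each $j$, the definition \eqref{eq:pafne} gives
\[
d(x_j,y_j)^p\le (1+\epsilon_j)\,d(x_{j-1},y_{j-1})^p-\tfrac{1-\alpha_j}{\alpha_j}\psi^{(p,c)}(x_{j-1},y_{j-1},x_j,y_j).
\]
I will chain these from $j=m$ down to $j=1$, substituting each inequality into the previous one. Multiplying the $j$-th inequality by $\prod_{i=j+1}^m(1+\epsilon_i)=1+\epsilonhat_j$ yields
\[
d(x_m,y_m)^p\le \prod_{i=1}^m(1+\epsilon_i)\,d(x_0,y_0)^p-\sum_{j=1}^m(1+\epsilonhat_j)\tfrac{1-\alpha_j}{\alpha_j}\psi^{(p,c)}(x_{j-1},y_{j-1},x_j,y_j).
\]
This step needs only the nonnegative multipliers $1+\epsilonhat_j\ge 1$, so the signs of the $\psi$ terms do not matter; the inequality is simply rescaled and then added.

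Next I apply \eqref{eq:afne composition} to bound the negative sum from above by $-\tfrac{1-\alphabar}{\alphabar}\psi^{(p,c)}(x_0,y_0,x_m,y_m)$. This gives exactly \eqref{eq:pafne} for $\Tbar$ at $y$, with violation $\epsilonbar=\prod_{i=1}^m(1+\epsilon_i)-1$ and constant $\alphabar$. Since $x_0$ and the selections $x_j$ were arbitrary, this holds for every $x_+\in\Tbar(x_0)$. As a consistency check, the case $m=2$ reproduces Proposition \ref{t:compositionthm}, because $\epsilonhat_1=\epsilon_2$ and $\epsilonhat_2=0$. The stated form of \eqref{eq:afne composition} attaches the weight $(1+\epsilon_1)$ instead, and I would note that discrepancy in a remark.

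The main obstacle is bookkeeping the domains and base points. I must confirm that each $T_j$ is a$\alpha$-fne at the intermediate images $y_{j-1}$ of $y$, which is what Assumption \ref{ass:regularity gen metric} stipulates when $y\in\Fix\Tbar$, and that $D_j=T_{j-1}(D_{j-1})$ contains the relevant iterates. I must also make sure the neighborhoods $D_\epsilon(y)$ in Definition \ref{d:pafne} are taken to be the full $D_j$. Once these are in place, the algebra above is routine.
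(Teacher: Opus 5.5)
Your argument is correct, but it takes a different route from the paper. The paper's proof is the one-liner ``by the previous proposition and an induction argument,'' so it uses Proposition \ref{t:compositionthm} as the inductive step. You bypass that proposition. You telescope the $m$ individual a$\alpha$-fne inequalities at the base points $y_{j-1}$, weighting the $j$-th one by $1+\epsilonhat_j=\prod_{i=j+1}^m(1+\epsilon_i)\ge 1$, and then invoke \eqref{eq:afne composition} once on the full chain.

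Your route uses Assumption \ref{ass:regularity gen metric} exactly as stated. Taken literally, an induction through Proposition \ref{t:compositionthm} would need two things for every partial composition $T_k\circ\cdots\circ T_1$: an intermediate a$\alpha$-fne constant, and a pairwise condition of type \eqref{eq:afne composition2}. Assumption \ref{ass:regularity gen metric} supplies neither; it controls only the full chain. Your route also shows where the weights $1+\epsilonhat_j$ come from.

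Your check of the case $m=2$ is right. Chaining two steps puts the factor $1+\epsilon_2$ on $\psi^{(p,c)}(x,y,x_1,y_1)$, which matches $\epsilonhat_1=\epsilon_2$. By contrast, \eqref{eq:afne composition2} as printed carries $1+\epsilon_1$. These two are not interchangeable, because $\psi^{(p,c)}$ is not sign-definite in general: Lemma \ref{t:metric space properties} gives nonnegativity only for $p=c=2$. So the shortcut through that proposition does not mesh cleanly with the assumption, while your direct telescoping does. What the paper's route offers is brevity and reuse of an existing two-map lemma.

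Two small presentation points. First, your opening sentence announces Proposition \ref{t:compositionthm} as the building block, but your actual proof never uses it. Second, what you establish is the a$\alpha$-fne property of $\Tbar$ at base points $y\in\Fix\Tbar\cap D_1$, where the assumption supplies the regularity of each $T_j$ at $y_{j-1}$. That is exactly how the corollary is used in the proof of Theorem \ref{t:msr convergence}, so state the conclusion in that form rather than ``on $D_1$''.
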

\begin{proof}
    The result follows by the previous proposition and an induction argument.
\end{proof}
\begin{lemma}[convex relaxations]\label{l:cvx relaxation id}
  Let $(G,d)$ be a $p$-uniformly convex space with $p\in(1,\infty)$ and constant $c > 0$, and let $\mmap{T}{D}{G}$ for
  $D\subset G$. Define $T_\tau\equiv (1-\tau)\Id\oplus \tau T$ for $\tau \in [0,1]$.
  For $x_+\in T(x)$ and $y_+\in T(y)$ denote
  $x_\tau \equiv (1-\tau)x\oplus \tau x_+$ and $y_\tau \equiv (1-\tau)y\oplus \tau y_+$.  Then
  \begin{equation}\label{e:Big Thief}
    \begin{aligned}
    d(x_\tau,y_\tau)^p
    &\le (1-\tau)^2 d(x,y)^p + \tau^2 d(x_+,y_+)^p + 2(1-\tau)\tau\,\Delta^{(p,c)}(x,y,x_+,y_+) \\
    &\quad + \tfrac{2-c}{2}(1-\tau)\tau\big(d(y,x_+)^p + d(x,y_+)^p\big).
    \end{aligned}
  \end{equation}
\end{lemma}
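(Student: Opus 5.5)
The plan is to apply the defining inequality of $p$-uniform convexity twice, first along the geodesic from $x$ to $x_+$ and then along the geodesic from $y$ to $y_+$. After that, the claimed right-hand side of \eqref{e:Big Thief} should come out by regrouping terms with the definition \eqref{eq:delta} of $\Delta^{(p,c)}$. Fix arbitrary selections $x_+\in T(x)$ and $y_+\in T(y)$; the argument never uses anything about $T$ beyond these two points. The endpoint cases $\tau\in\{0,1\}$ are covered by the same computation, since there the convexity inequalities hold with equality.

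\emph{Step 1.} Apply $p$-uniform convexity with base point $z=y_\tau$ and the geodesic $x_\tau=(1-\tau)x\oplus\tau x_+$. This gives
\[
d(y_\tau,x_\tau)^p\le (1-\tau)d(y_\tau,x)^p+\tau d(y_\tau,x_+)^p-\tfrac{c}{2}\tau(1-\tau)d(x,x_+)^p .
\]

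\emph{Step 2.} Bound each of $d(x,y_\tau)^p$ and $d(x_+,y_\tau)^p$ by $p$-uniform convexity along $y_\tau=(1-\tau)y\oplus\tau y_+$, with base points $z=x$ and $z=x_+$ respectively:
\[
d(x,y_\tau)^p\le (1-\tau)d(x,y)^p+\tau d(x,y_+)^p-\tfrac{c}{2}\tau(1-\tau)d(y,y_+)^p,
\]
\[
d(x_+,y_\tau)^p\le (1-\tau)d(x_+,y)^p+\tau d(x_+,y_+)^p-\tfrac{c}{2}\tau(1-\tau)d(y,y_+)^p.
\]
Substituting these into Step 1 (the coefficients $1-\tau$ and $\tau$ are nonnegative) yields
\[
d(x_\tau,y_\tau)^p\le(1-\tau)^2d(x,y)^p+\tau^2d(x_+,y_+)^p+\tau(1-\tau)\Big[d(x,y_+)^p+d(y,x_+)^p-\tfrac c2\big(d(x,x_+)^p+d(y,y_+)^p\big)\Big].
\]
Here the two $d(y,y_+)^p$ penalties combine with weights $(1-\tau)+\tau=1$.

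\emph{Step 3.} Check that the bracket equals
\[
2\Delta^{(p,c)}(x,y,x_+,y_+)+\tfrac{2-c}{2}\big(d(y,x_+)^p+d(x,y_+)^p\big).
\]
Expanding $2\Delta^{(p,c)}$ gives
\[
\tfrac c2\big(d(x,y_+)^p+d(y,x_+)^p-d(x,x_+)^p-d(y,y_+)^p\big).
\]
Adding $\tfrac{2-c}{2}$ times the cross terms restores the full coefficient $1$ on $d(x,y_+)^p+d(y,x_+)^p$ and leaves exactly $-\tfrac c2$ on the other two terms, so the identity holds and \eqref{e:Big Thief} follows.

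I expect no real obstacle. The only point needing care is the bookkeeping in Step 2, which keeps both negative $d(y,y_+)^p$ terms rather than discarding them. Keeping them together with the $d(x,x_+)^p$ term from Step 1 is precisely what produces the $\Delta^{(p,c)}$ structure. If any of these terms were dropped, the bound would only hold with $c$ replaced by something weaker.
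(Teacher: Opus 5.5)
Your proposal is correct and takes the same approach as the paper: apply $p$-uniform convexity along the geodesic from $x$ to $x_+$ with base point $y_\tau$, then along the geodesic from $y$ to $y_+$ with base points $x$ and $x_+$. Your Step 3 bookkeeping, which the paper leaves implicit, correctly identifies the bracket as $2\Delta^{(p,c)}(x,y,x_+,y_+)+\tfrac{2-c}{2}\big(d(y,x_+)^p+d(x,y_+)^p\big)$.
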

\begin{proof}
  Since $G$ is a $p$-uniformly convex space, for the geodesic from $x$ to $x_+$,
  $$
  d(x_\tau,y_\tau)^p \le (1-\tau)\,d(x,y_\tau)^p + \tau\,d(x_+,y_\tau)^p - \tfrac{c}{2}(1-\tau)\tau\,d(x,x_+)^p.
  $$
  Applying the same inequality to the geodesic from $y$ to $y_+$ gives \eqref{e:Big Thief}.
\end{proof}

\begin{theorem}[convex relaxations at generic points] \label{t:cvx relaxation}
  Let $(G,d)$ be a $p$-uniformly convex space with $p\in(1,\infty)$ and constant $c > 0$, and let $\mmap{T}{D}{G}$ for
$D\subset G$ and for all $x\in D$ let $T_\tau(x)= (1-\tau)x\oplus \tau T(x)$ for some $\mmap{T}{D}{G}$
and $\tau \in [0,1]$. For $x_+\in T(x)$ denote $x_\tau \equiv (1-\tau)x\oplus \tau x_+$.
$T$ is pointwise almost nonexpansive at $y$ with violation $\epsilon$ on $D$ if and only
if, for all $x\in D$, for all $x_+ \in T(x)$,
\begin{align*}
  d(x_\tau ,T_\tau(y))^p\\
  \leq (1 &+ \epsilon_\tau) d(x,y)^p -(1-\tau)\tau\psi^{(p,c)}(x,y,x_+,T(y))\\
&+ (1-\tau )\tau \frac{2-c}{2} (d(y,x_+)^p + d(x,T(y))^p),
\end{align*}
where
\begin{equation}\label{e:Sweet}
  \epsilon_\tau = \tau\!\left(2\tau + (1-\tau)c + \epsilon\Big(\tau + \tfrac{c}{2}(1-\tau)\Big) - 2\right),
\end{equation}
\end{theorem}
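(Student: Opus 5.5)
The plan is to prove the forward direction by specializing Lemma \ref{l:cvx relaxation id} to $y_+=T(y)$ and rewriting the four-point quantity $\Delta^{(p,c)}$ in terms of $\psi^{(p,c)}$. The reverse direction follows by taking $\tau=1$. Throughout, $T$ pointwise almost nonexpansive at $y$ is single-valued at $y$ by Proposition \ref{t:single-valued pane}, so $T(y)$, $T_\tau(y)=y_\tau$ and $\psi^{(p,c)}(x,y,x_+,T(y))$ are unambiguous.

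\emph{Step 1 (algebraic identity).} Write $u=x_+$ and $v=T(y)$. Directly from \eqref{eq:delta} and \eqref{eq:psi},
\[
2\Delta^{(p,c)}(x,y,u,v) = -\psi^{(p,c)}(x,y,u,v) + \tfrac{c}{2}\bigl(d(u,v)^p + d(x,y)^p\bigr),
\]
since the mixed terms $d(x,v)^p,d(y,u)^p,d(x,u)^p,d(y,v)^p$ appear with exactly opposite signs in the two definitions. Substituting this into \eqref{e:Big Thief} gives
\[
d(x_\tau,y_\tau)^p \le \bigl[(1-\tau)^2+\tfrac{c}{2}(1-\tau)\tau\bigr]d(x,y)^p + \bigl[\tau^2+\tfrac{c}{2}(1-\tau)\tau\bigr]d(u,v)^p - (1-\tau)\tau\,\psi^{(p,c)}(x,y,u,v) + \tfrac{2-c}{2}(1-\tau)\tau\bigl(d(y,u)^p+d(x,v)^p\bigr).
\]

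\emph{Step 2 (insert almost nonexpansiveness).} The coefficient $\tau^2+\tfrac{c}{2}(1-\tau)\tau$ is nonnegative because $c>0$ and $\tau\in[0,1]$. Hence we may replace $d(u,v)^p$ by its upper bound $(1+\epsilon)d(x,y)^p$ from \eqref{eq:pane}. The resulting coefficient of $d(x,y)^p$ is
\[
(1-\tau)^2+\tau^2+c(1-\tau)\tau+\epsilon\bigl(\tau^2+\tfrac{c}{2}(1-\tau)\tau\bigr).
\]
Using $(1-\tau)^2+\tau^2 = 1-2\tau+2\tau^2$, this equals $1+\epsilon_\tau$ with $\epsilon_\tau$ as in \eqref{e:Sweet}. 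This is the claimed inequality.

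\emph{Step 3 (converse).} Assume the displayed inequality holds for all $\tau\in[0,1]$, in particular for $\tau=1$. Then $x_\tau=x_+$ and $T_\tau(y)=T(y)$. The $\psi$-term and the $(2-c)$-term vanish because of the factor $(1-\tau)$. From \eqref{e:Sweet}, $\epsilon_1=2+\epsilon-2=\epsilon$. The inequality therefore reads $d(x_+,T(y))^p\le(1+\epsilon)d(x,y)^p$ for all $x\in D$ and $x_+\in T(x)$, which is \eqref{eq:pane}.

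\emph{Expected obstacle.} The only delicate point is the reliance on \eqref{e:Big Thief} itself. Its two-line proof combines two applications of $p$-uniform convexity, and one has to check that the cross terms assemble into exactly $2(1-\tau)\tau\Delta^{(p,c)}$ plus the $\tfrac{2-c}{2}$ correction. If that bookkeeping were off, I would redo it explicitly: first expand $d(x_\tau,y_\tau)^p$ along the geodesic from $x$ to $x_+$, then expand each of $d(x,y_\tau)^p$ and $d(x_+,y_\tau)^p$ along the geodesic from $y$ to $y_+$, and collect terms. Once \eqref{e:Big Thief} is granted, the rest is the exact identity in Step 1 together with the sign check in Step 2.
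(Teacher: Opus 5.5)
Your proof is correct and follows the paper's argument: you substitute $d(x_+,T(y))^p\le(1+\epsilon)d(x,y)^p$ into \eqref{e:Big Thief}, rewrite $2\Delta^{(p,c)}$ through $\psi^{(p,c)}$ (the identity the paper states later as \eqref{eq:delta_to_psi}), collect the coefficient of $d(x,y)^p$ into $1+\epsilon_\tau$, and get the converse by setting $\tau=1$. Two details you make explicit are left implicit in the paper: the coefficient $\tau^2+\tfrac{c}{2}(1-\tau)\tau$ is nonnegative, and the converse needs the inequality at $\tau=1$, since at $\tau=0$ it is vacuous.
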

\begin{proof}
  Using almost nonexpansiveness at $y$ gives $d(x_+,y_+)^p \le (1+\epsilon)d(x,y)^p$.
  Substituting this inequality into \eqref{e:Big Thief} results in
  \begin{align*}
  d(x_\tau,y_\tau)^p
  &\le \Big[(1-\tau)^2 + (1-\tau)\tau\tfrac{c}{2} + \big(\tau^2 + (1-\tau)\tau\tfrac{c}{2}\big)(1+\epsilon)\Big] d(x,y)^p \\
  &\quad - (1-\tau)\tau\,\psi^{(p,c)}(x,y,x_+,y_+) + \tfrac{2-c}{2}(1-\tau)\tau\big(d(y,x_+)^p + d(x,y_+)^p\big).
  \end{align*}
  Simplifying the coefficient of $d(x,y)^p$ gives
  $$
  (1-\tau)^2 + \tau^2 + c(1-\tau)\tau + \epsilon\Big(\tau^2 + \tfrac{c}{2}(1-\tau)\tau\Big)
  = 1 + \epsilon_\tau,
  $$
  where
  $$
  \epsilon_\tau = \tau\!\left(2\tau + (1-\tau)c + \epsilon\Big(\tau + \tfrac{c}{2}(1-\tau)\Big) - 2\right),
  $$
  which matches \eqref{e:Sweet}. Hence,
  $$
  \begin{aligned}
  d(x_\tau ,T_\tau(y))^p
  &\le (1+\epsilon_\tau)\, d(x,y)^p - (1-\tau)\tau\,\psi^{(p,c)}(x,y,x_+,T(y)) \\
  &\quad + \tfrac{2-c}{2}(1-\tau)\tau\big(d(y,x_+)^p + d(x,T(y))^p\big).
  \end{aligned}
  $$
  Conversely, assume that the inequality of the theorem holds for all $\tau\in[0,1]$.
  Evaluating it at $\tau=1$ proves that $T$ is pointwise almost nonexpansive at $y$.
\end{proof}

\begin{proposition}[convex relaxations - sufficient conditions]\label{t:cvx relaxation suff}
  Let  $(G, d)$ be a p-uniformly convex space with constant $c>0$, and
let $\mmap{T}{D}{G}$ for $D\subset G$.  For all $x\in D$ let
$T_\tau(x) = (1-\tau) x\oplus\tau T (x)$ for some $\mmap{T}{D}{G}$
and $\tau\in [0,1]$.  For $x_+\in T(x)$ denote $x_\tau\equiv (1-\tau)x\oplus \tau x_+$.
\begin{enumerate}[(i)]
  \item\label{t:cvx relaxation i}
  Let $T$ be pointwise almost nonexpansive at $y$ with violation $\epsilon$ on $D$. Then
$T_\tau$ is pointwise a$\alpha$-fne at $y$ with constant $\alpha = 1-\tau \in (0,1]$ and violation
$\epsilon_\tau$ given by \eqref{e:Sweet} if, for all $x\in D$, for all $x_+\in T(x)$,
\begin{equation}\label{eq:Bitter2}
  \begin{aligned}
  \tfrac{\tau}{1-\tau}\psi^{(p,c)}(x, y, x_\tau, T_\tau(y))
  &\leq (1-\tau)\tau\psi^{(p,c)}(x, y, x_+, T(y))\\
    &\qquad -\, \tfrac{2-c}{2}(1-\tau)\tau\paren{d(y, x_+)^p+d(x, T(y))^p}.
  \end{aligned}
  \end{equation}
  \item\label{t:cvx relaxation Ttilde suff}
Let $T$ be pointwise a$\alpha$-fne at $y$ with constant $\alpha\in(0,1)$ and
violation $\epsilon$ on $D$.  Then $T_\tau$ is pointwise a$\alpha$-fne at $y$ with constant
$\alpha_\tau\in(0,1)$ and violation $\epsilon_\tau=\tau^2\epsilon$ if, for all $x\in D$, for all $x_+\in T(x)$,

\begin{equation}\label{eq:Smile2}
\begin{aligned}
\tfrac{1-\alpha_\tau}{\alpha_\tau}\psi^{(p,c)}(x, y, x_\tau, T_\tau(y))
&\leq \tau^2\tfrac{1-\alpha}{\alpha}\psi^{(p,c)}(x, y, x_+, T(y))\\
&\qquad  +\paren{2(1-\tau)\tau}d(x, y)^p \\
  &\qquad -\, (1-\tau)\tau 2\Delta^{(p,c)}(x, y, x_+, T(y))\\
  &\qquad -\, \tfrac{2-c}{2}(1-\tau)\tau\paren{d(y, x_+)^p+d(x, T(y))^p}.
\end{aligned}
\end{equation}
\end{enumerate}
\end{proposition}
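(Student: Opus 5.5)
Both parts follow by inserting the hypothesis into Lemma \ref{l:cvx relaxation id} and rewriting the right-hand side into the form of \eqref{eq:pafne} for $T_\tau$ at $y$ with the stated constant and violation. Throughout I fix $x\in D$, $x_+\in T(x)$, write $y_+=T(y)$ (single-valued by Proposition \ref{t:single-valued pane}), and set $y_\tau=T_\tau(y)$. The starting point is \eqref{e:Big Thief}:
\[
d(x_\tau,y_\tau)^p\le (1-\tau)^2 d(x,y)^p+\tau^2 d(x_+,y_+)^p+2(1-\tau)\tau\Delta^{(p,c)}(x,y,x_+,y_+)+\tfrac{2-c}{2}(1-\tau)\tau\,E,
\]
where $E\equiv d(y,x_+)^p+d(x,y_+)^p$.

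For part (i) I follow the proof of Theorem \ref{t:cvx relaxation} verbatim. I bound $d(x_+,y_+)^p\le(1+\epsilon)d(x,y)^p$ and then rewrite $2\Delta^{(p,c)}$ via the identity
\[
2\Delta^{(p,c)}(x,y,x_+,y_+)=\tfrac{c}{2}\big(d(x,x_+)^p+d(y,y_+)^p+d(x_+,y_+)^p+d(x,y)^p\big)-\psi^{(p,c)}(x,y,x_+,y_+),
\]
which is immediate from \eqref{eq:delta} and \eqref{eq:psi}. This yields the inequality of Theorem \ref{t:cvx relaxation} with violation $\epsilon_\tau$ given by \eqref{e:Sweet}. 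The residual $-(1-\tau)\tau\psi+\tfrac{2-c}{2}(1-\tau)\tau E$ is, by hypothesis \eqref{eq:Bitter2}, bounded above by $-\tfrac{\tau}{1-\tau}\psi^{(p,c)}(x,y,x_\tau,y_\tau)$. This is exactly the a$\alpha$-fne term with $\tfrac{1-\alpha}{\alpha}=\tfrac{\tau}{1-\tau}$, that is, $\alpha=1-\tau$. The one point to check is that the terms $\tfrac{c}{2}d(x,x_+)^p$ in the $\Delta$-rewrite are correctly accounted for. In Theorem \ref{t:cvx relaxation} they are absorbed through the negative term $-\tfrac c2(1-\tau)\tau d(x,x_+)^p$ from Lemma \ref{l:cvx relaxation id}. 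I would simply take the theorem's inequality as given and only perform the substitution.

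For part (ii) I keep $d(x_+,y_+)^p$ intact and use the a$\alpha$-fne inequality for $T$:
\[
\tau^2 d(x_+,y_+)^p\le \tau^2(1+\epsilon)d(x,y)^p-\tau^2\tfrac{1-\alpha}{\alpha}\psi^{(p,c)}(x,y,x_+,y_+).
\]
I then add and subtract $2(1-\tau)\tau d(x,y)^p$ so that the coefficient of $d(x,y)^p$ becomes $(1-\tau)^2+2(1-\tau)\tau+\tau^2(1+\epsilon)=1+\tau^2\epsilon$. What remains is
\[
-\tau^2\tfrac{1-\alpha}{\alpha}\psi-2(1-\tau)\tau d(x,y)^p+2(1-\tau)\tau\Delta+\tfrac{2-c}{2}(1-\tau)\tau E,
\]
which is exactly the negative of the right-hand side of \eqref{eq:Smile2}. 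Hypothesis \eqref{eq:Smile2} therefore bounds this remainder above by $-\tfrac{1-\alpha_\tau}{\alpha_\tau}\psi^{(p,c)}(x,y,x_\tau,y_\tau)$. This gives \eqref{eq:pafne} for $T_\tau$ with constant $\alpha_\tau$ and violation $\tau^2\epsilon$.

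Neither part involves a hard step, since the hypotheses are tailored to absorb the remainders. The main care point is bookkeeping of signs and coefficients: the $\Delta$-to-$\psi$ identity in (i), and matching each term of \eqref{eq:Smile2} with the opposite sign in (ii).
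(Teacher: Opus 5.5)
Your proposal is correct and follows the paper's proof essentially line for line. Part (i) substitutes \eqref{eq:Bitter2} into the inequality of Theorem \ref{t:cvx relaxation}, and part (ii) bounds $\tau^2 d(x_+,y_+)^p$ in \eqref{e:Big Thief} by the a$\alpha$-fne inequality, regroups the coefficient of $d(x,y)^p$ as $1+\tau^2\epsilon$, and absorbs the remainder with \eqref{eq:Smile2}.

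One harmless slip is worth correcting. Your displayed identity for $2\Delta^{(p,c)}$ is wrong: its right-hand side equals $\tfrac{c}{2}\paren{d(y,x_+)^p+d(x,y_+)^p}$, not $2\Delta^{(p,c)}$. The correct relation is \eqref{eq:delta_to_psi}, $2\Delta^{(p,c)}=\tfrac{c}{2}\paren{d(x,y)^p+d(x_+,y_+)^p}-\psi^{(p,c)}$. The $d(x,x_+)^p$ and $d(y,y_+)^p$ terms are already folded into $\Delta^{(p,c)}$ in \eqref{e:Big Thief}, so there is nothing left over to absorb them. Since you take Theorem \ref{t:cvx relaxation} as given, none of your argument depends on this identity.
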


\begin{proof}
  For part \eqref{t:cvx relaxation i}, assume $T$ is pointwise almost nonexpansive at $y$
  with violation $\epsilon$ on $D$. By Theorem \ref{t:cvx relaxation}, for any $x\in D$ and any $x_+\in T(x)$,
  $$
  \begin{aligned}
  d(x_\tau ,T_\tau(y))^p
  &\le (1+\epsilon_\tau)\, d(x,y)^p -(1-\tau)\tau\,\psi^{(p,c)}(x,y,x_+,T(y)) \\
  &\qquad + \tfrac{2-c}{2}(1-\tau)\tau\big(d(y,x_+)^p + d(x,T(y))^p\big),
  \end{aligned}
  $$
  with $\epsilon_\tau$ given by \eqref{e:Sweet}. Using \eqref{eq:Bitter2} gives
  $$
  d(x_\tau ,T_\tau(y))^p \le (1+\epsilon_\tau)\, d(x,y)^p - \tfrac{\tau}{1-\tau}\,\psi^{(p,c)}(x,y,x_\tau,T_\tau(y)).
  $$
  Therefore $T_\tau$ is
  pointwise $\alpha$-fne at $y$ with $\alpha=1-\tau$ and violation $\epsilon_\tau$, as claimed.

  Part \eqref{t:cvx relaxation Ttilde suff}.  Starting with  \eqref{e:Big Thief}
   for all $x \in D$ and $x_+ \in T(x)$, we have
  \begin{equation}\label{eq:cvx relaxation anfe}
    \begin{aligned}
      d(x_\tau,y_\tau)^p
      &\le (1-\tau)^2 d(x,y)^p
         + \tau^2d(x_+, y_+)^p + 2(1-\tau)\tau\,\Delta^{(p,c)}(x,y,x_+,T(y)) \\
      &\quad + \frac{2-c}{2}(1-\tau)\tau
         \big(d(y,x_+)^p + d(x,T(y))^p\big)\\
      &\le \paren{(1-\tau)^2  + \tau^2(1+\epsilon)}\,d(x,y)^p
         - \tau^2 \frac{1-\alpha}{\alpha}\,
           \psi^{(p,c)}(x,y,x_+,T(y)) \\
      &\quad + 2(1-\tau)\tau\,\Delta^{(p,c)}(x,y,x_+,T(y)) \\
      &\quad + \frac{2-c}{2}(1-\tau)\tau
         \big(d(y,x_+)^p + d(x,T(y))^p\big),
      \end{aligned}
      \end{equation}
where the second inequality holds because $T$ is pointwise a$\alpha$-fne at $y$ with violation $\epsilon$ and constant $\alpha$.
Substituting \eqref{eq:Smile2} into \eqref{eq:cvx relaxation anfe} yields
  \begin{equation}\label{eq:afne-Ttau}
  \begin{aligned}
  & d(x_\tau,y_\tau)^p
   \\
  &\quad\le  \paren{(1-\tau)^2 + \tau^2(1+\epsilon) + 2(1-\tau)\tau}\,d(x,y)^p
   - \frac{1-\alpha_\tau}{\alpha_\tau}\,
     \psi^{(p,c)}(x,y,x_\tau,y_\tau)\\
    &\quad =\paren{1+\tau^2\epsilon}\,d(x,y)^p
   - \frac{1-\alpha_\tau}{\alpha_\tau}\,
     \psi^{(p,c)}(x,y,x_\tau,y_\tau),
     \end{aligned}
  \end{equation}
  which proves $T_\tau$ is pointwise
  $\alpha$-fne at $y$
  with violation $\epsilon_\tau = \tau^2\epsilon$, as claimed.
  \end{proof}

Notice in Proposition \ref{t:cvx relaxation suff} that the violation $\epsilon_\tau$
of the relaxed mapping $T_\tau$ can be reduced by taking smaller steps $\tau$.  This is
exactly in line with the intuition for reducing step sizes:  the more irregular the mapping $T$,
the more cautious one should be in taking the steps suggested by this mapping.

\subsection{Spaces with Nonpositive Curvature}\label{s:calculus CAT(0)}

In order to specialize the general framework of Section \ref{s:calculus} to the setting of
nonpositively curved metric spaces, recall from Lemma \ref{t:CATkappa-pucvx} that a CAT($0$) space
 is $p$-uniformly convex with $p=c=2$.  We therefore restrict our attention
to $p$-uniformly convex spaces with $p=c=2$.

\begin{lemma}\label{t:metric space properties}
  Let $(G,d)$ be a p-uniformly convex metric space with constant $p=c=2$. Then
  \begin{align} \label{e:metric CS}
    \Delta^{(2,2)}(x,y,u,v)\le d(x,y)d(u,v) \quad \forall x,y,u,v\in G,
  \end{align}
  and
  \begin{align} \label{e:metric CS psi}
    \psi^{(2,2)}(x,y,u,v)\ge 0 \quad \forall x,y,u,v\in G.
  \end{align}
\end{lemma}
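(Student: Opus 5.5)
The plan is to prove \eqref{e:metric CS} first as a metric ``quadrilateral'' (Cauchy--Schwarz type) inequality, and then obtain \eqref{e:metric CS psi} from it by pure algebra. With $p=c=2$, $\Delta^{(2,2)}(x,y,u,v)=\tfrac12\paren{d(x,v)^2+d(y,u)^2-d(x,u)^2-d(y,v)^2}$. So \eqref{e:metric CS} is equivalent to
\begin{equation*}
d(x,v)^2+d(y,u)^2\le d(x,u)^2+d(y,v)^2+2\,d(x,y)\,d(u,v).
\end{equation*}
The only tools I plan to use are the $2$-uniform convexity inequality with $c=2$ (the CN inequality) along one geodesic, and the triangle inequality.

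\textbf{Step 1 (the quadrilateral inequality).} I would first establish the variant in which $y$ and $u$ are interchanged, and then relabel. For $t\in(0,1)$ set $z_t\equiv(1-t)x\oplus t v$. Applying the defining inequality of $2$-uniform convexity with $c=2$ along the geodesic from $x$ to $v$, once with base point $y$ and once with base point $u$, gives
\begin{align*}
d(y,z_t)^2&\le(1-t)d(y,x)^2+t\,d(y,v)^2-t(1-t)d(x,v)^2,\\
d(u,z_t)^2&\le(1-t)d(u,x)^2+t\,d(u,v)^2-t(1-t)d(x,v)^2.
\end{align*}
The triangle inequality and the elementary bound $(a+b)^2\le a^2/(1-t)+b^2/t$ give $d(y,u)^2\le d(y,z_t)^2/(1-t)+d(u,z_t)^2/t$. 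Inserting the two estimates, the terms in $d(x,v)^2$ combine to $-d(x,v)^2$, and we obtain
\begin{equation*}
d(y,u)^2+d(x,v)^2\le d(x,y)^2+d(u,v)^2+\tfrac{t}{1-t}d(y,v)^2+\tfrac{1-t}{t}d(x,u)^2 .
\end{equation*}
Minimizing over $t$ (the AM--GM choice $t/(1-t)=d(x,u)/d(y,v)$) turns the last two terms into $2\,d(x,u)\,d(y,v)$. Relabeling $y\leftrightarrow u$ (the left side is symmetric under this swap) yields exactly the displayed form of \eqref{e:metric CS}.

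\textbf{Step 2 (nonnegativity of $\psi$).} Expanding \eqref{eq:psi} with $p=c=2$ gives
\begin{equation*}
\psi^{(2,2)}(x,y,u,v)=d(x,y)^2+d(u,v)^2-2\Delta^{(2,2)}(x,y,u,v).
\end{equation*}
By Step 1 this is at least $\paren{d(x,y)-d(u,v)}^2\ge0$, which proves \eqref{e:metric CS psi}.

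\textbf{Main obstacle.} The real work is in Step 1: choosing the right auxiliary geodesic and the right weighting in the triangle inequality so that the $-t(1-t)d(x,v)^2$ correction terms add up to exactly $-d(x,v)^2$. The degenerate cases need separate care. If $d(x,u)=0$ or $d(y,v)=0$, the optimal $t$ lies at an endpoint and is not admissible. In that case I would let $t\to0$ or $t\to1$ in the displayed bound, where the remaining weighted term vanishes. If $x=v$, the geodesic is trivial, $z_t=x$, and the same computation still applies.
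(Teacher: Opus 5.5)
Your proof is correct. Step 2 is exactly the paper's argument: the identity \eqref{eq:delta_to_psi} with $p=c=2$, followed by the quadrilateral bound, which gives $\psi^{(2,2)}\ge (d(x,y)-d(u,v))^2\ge 0$.

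The difference is in Step 1. The paper does not prove \eqref{e:metric CS}; it cites \cite[Lemma 2.1]{LanPavSch00}. You instead derive it directly from the defining inequality with $c=2$. You apply it along the single geodesic from $x$ to $v$, use the weighted triangle inequality $(a+b)^2\le a^2/(1-t)+b^2/t$, optimize over $t$, and then relabel $y\leftrightarrow u$. I checked the computation: the two $-t(1-t)d(x,v)^2$ corrections do combine to $-d(x,v)^2$, and the swap produces exactly the target inequality.

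What your route buys is self-containment and a clearer view of what the lemma actually needs. Only the $2$-uniform convexity inequality with $c=2$ and the triangle inequality are used. There is no appeal to comparison triangles, and no need for the standard but nontrivial fact that a geodesic space satisfying this inequality is CAT$(0)$, which is the setting in which the cited quadrilateral inequality is usually stated. The citation buys brevity.

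Your treatment of the degenerate cases is also fine: letting $t\to 0$ or $t\to 1$ is the same as taking the infimum over $t\in(0,1)$.
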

\begin{proof}
  For the proof of inequality \eqref{e:metric CS} see \cite[Lemma 2.1]{LanPavSch00}.
  For \eqref{e:metric CS psi}, it is easy to verify for all $p$ and $c$,
  \begin{equation}\label{eq:delta_to_psi}
    \psi^{(p,c)}(x,y,u,v) = \frac{c}{2}(d(x,y)^p + d(u,v)^p) -2\Delta^{(p,c)}(x,y,u,v).
  \end{equation}
  Now using \eqref{e:metric CS} yields
  \begin{equation*}
    \begin{aligned}
      \psi^{(2,2)}(x,y,u,v) &\ge d(x,y)^2 + d(u,v)^2 - 2d(x,y)d(u,v)\\
      &= (d(x,y) - d(u,v))^2 \ge 0.
    \end{aligned}
  \end{equation*}
\end{proof}

\begin{lemma}\label{l:compsite condition flat}
Let $(G, d)$ be a $p$-uniformly convex metric space with $p=c=2$,
for $D_j\subset G$ ($j=1,2,\dots,m$) let $\mmap{T_j}{D_j}{G}$ be such that
the composite mapping
$\Tbar\equiv T_m\circ \cdots\circ T_2\circ T_1$ is well-defined on $D_1$,
and possesses fixed points in $D_1$, and each mapping $T_j$
is a$\alpha$-fne at any $y_j\in T_{j-1}\circ \cdots\circ T_2\circ T_1(\Fix \Tbar\cap D_1)$
with constant $\alpha_j\in (0,1)$
and violation $\epsilon_j\geq 0$ on $D_j\supset T_{j-1}\circ \cdots\circ T_2\circ T_1(D_1)$.
Then there exists an $\alphabar\in (0,1)$ such that \eqref{eq:afne composition} holds.
\end{lemma}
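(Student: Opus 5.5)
Since $p=c=2$, Lemma \ref{t:metric space properties} gives $\psi^{(2,2)}\ge 0$ for every quadruple. Hence every term on the right-hand side of \eqref{eq:afne composition} is nonnegative, because $1+\epsilonhat_j\ge 1$ and $(1-\alpha_j)/\alpha_j>0$. The inequality we need is therefore of the form $a\,\psi(x_0,y_0,x_m,y_m)\le R$ with $R\ge 0$, and I will construct $\alphabar$ explicitly. Write $R\ge \underline\kappa\sum_{j}\psi^{(2,2)}(x_{j-1},y_{j-1},x_j,y_j)$, where
\[
\underline\kappa\equiv\min_j \frac{1-\alpha_j}{\alpha_j}>0 .
\]
The core step is then a ``triangle-type'' inequality for the transport discrepancy along a chain:
\[
\psi^{(2,2)}(x_0,y_0,x_m,y_m)\le m\sum_{j=1}^m \psi^{(2,2)}(x_{j-1},y_{j-1},x_j,y_j).
\]
Granting this, I choose $\alphabar$ so that $(1-\alphabar)/\alphabar = \underline\kappa/m$, i.e. $\alphabar = m/(m+\underline\kappa)\in(0,1)$, and \eqref{eq:afne composition} follows.

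To prove the chain inequality I will use the identity \eqref{eq:delta_to_psi}:
\[
\psi^{(2,2)}(x,y,u,v)=d(x,y)^2+d(u,v)^2-2\Delta^{(2,2)}(x,y,u,v),
\]
where $\Delta^{(2,2)}(x,y,u,v)=\tfrac12\big(d(x,v)^2+d(y,u)^2-d(x,u)^2-d(y,v)^2\big)$. The function $\Delta$ is additive along chains in the last pair of arguments only up to the four-point structure. In Hilbert space $\psi^{(2,2)}(x,y,u,v)=\|(x-y)-(u-v)\|^2$, and the chain inequality is exactly the Cauchy--Schwarz bound $\|\sum_j w_j\|^2\le m\sum_j\|w_j\|^2$ with $w_j=(x_{j-1}-y_{j-1})-(x_j-y_j)$. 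In a CAT(0) space I will instead try to use Reshetnyak's quadruple comparison (or the Berg--Nikolaev quasilinearization, of which \eqref{e:metric CS} is the Cauchy--Schwarz part) to bound $\psi$ for the outer quadruple by the $\psi$'s of the consecutive quadruples. Concretely, I will first prove the case $m=2$ with some constant, $\psi(x_0,y_0,x_2,y_2)\le C\big(\psi(x_0,y_0,x_1,y_1)+\psi(x_1,y_1,x_2,y_2)\big)$, and then induct, letting the constant grow with $m$. Any finite constant suffices, since only existence of $\alphabar$ is claimed.

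The main obstacle is this $m=2$ comparison, because $\psi$ is not literally a squared norm of a difference in a nonlinear space. If the direct comparison proves awkward, my fallback uses the fact that the claim only needs \emph{some} $\alphabar$. I would bound the left-hand side crudely via the triangle inequality,
\[
\psi^{(2,2)}(x_0,y_0,x_m,y_m)\le \big(d(x_0,y_0)+d(x_m,y_m)\big)^2 + \text{(terms in the } d(x_{j-1},x_j),\, d(y_{j-1},y_j)\text{)}.
\]
I would then try to dominate these by the right-hand side, using that for $y_j$ built from fixed points the relevant $\psi$ reduces (Proposition \ref{t:properties pafne}) to $d(x_{j-1},x_j)^2$-type quantities. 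The ratio of left to right would be bounded uniformly except where the right-hand side vanishes. In that degenerate case each $\psi^{(2,2)}(x_{j-1},y_{j-1},x_j,y_j)=0$, so by the equality case in Lemma \ref{t:metric space properties} the consecutive quadruples are ``flat'' with $d(x_{j-1},y_{j-1})=d(x_j,y_j)$. I would then need to show that the outer $\psi$ also vanishes, which is exactly the $m=2$ comparison in its degenerate form.
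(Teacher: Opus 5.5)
Your core step, the ``chain inequality''
\[
\psi^{(2,2)}(x_0,y_0,x_m,y_m)\le C\sum_{j=1}^m\psi^{(2,2)}(x_{j-1},y_{j-1},x_j,y_j),
\]
is false in CAT(0) spaces, so the proposal has a genuine gap. The inequality does hold in Hilbert space: there $\psi^{(2,2)}(x,y,u,v)=\|(x-y)-(u-v)\|^2$, the difference vectors telescope, and Cauchy--Schwarz gives $C=m$ and $\alphabar=m/(m+\underline\kappa)$. In a CAT(0) space, however, it fails for every $C$. The failure already occurs for $m=2$ with $y_2=y_0$, in exactly the degenerate form you postpone to the end.

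Take the tripod, a proper Hadamard space consisting of three rays $R_a,R_b,R_c$ glued at a point $o$. For $t\in(0,1)$ place the points as follows:
\begin{itemize}
\item $x_0\in R_a$ and $x_2\in R_b$, each at distance $t$ from $o$;
\item $x_1=o$;
\item $y_0,y_1\in R_c$ at distances $1-t$ and $1$ from $o$, and $y_2=y_0$.
\end{itemize}
Then $(x_0,y_0,x_1,y_1)$ is a collinear parallelogram on the geodesic line $R_a\cup R_c$, and $(x_1,y_1,x_2,y_0)$ is one on the line $R_b\cup R_c$. Direct computation gives $\psi^{(2,2)}(x_0,y_0,x_1,y_1)=\psi^{(2,2)}(x_1,y_1,x_2,y_0)=0$, while $\psi^{(2,2)}(x_0,y_0,x_2,y_0)=d(x_0,x_2)^2=4t^2$.

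The hypotheses on the $T_j$ cannot rule this out. Set $D_1=\{x_0,y_0\}$ and $D_2=\{x_1,y_1\}$, with $T_1\colon x_0\mapsto x_1,\ y_0\mapsto y_1$ and $T_2\colon x_1\mapsto x_2,\ y_1\mapsto y_0$. Both maps preserve the relevant distances and have vanishing $\psi^{(2,2)}$, so they satisfy \eqref{eq:pafne} with zero violation and any $\alpha_j$, and $y_0\in\Fix\Tbar$. The same configuration also arises when $T_1,T_2$ are the $\prox^2$ maps with parameter $t$ of the Busemann functions of $R_c$ and $R_b$. These maps translate every point a distance $t$ toward the end of the respective ray and are firmly nonexpansive.

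So the right-hand side of \eqref{eq:afne composition} is $0$ while the left-hand side is positive for every $\alphabar\in(0,1)$. No comparison constant and no induction on $m$ can work, because the statement itself fails at this generality.

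Your fallback has two further problems:
\begin{itemize}
\item The identity \eqref{e:psi-Fix T} from Proposition \ref{t:properties pafne} requires $y_{j-1}\in\Fix T_j$. Here only the full composition fixes $y_0$, and in general $y_j=T_j(y_{j-1})\neq y_{j-1}$.
\item The claim that ``the ratio is bounded uniformly away from the degenerate set'' would need a compactness argument you do not supply.
\end{itemize}

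For comparison, the paper argues by contradiction. If no $\alphabar$ works, it picks a single chain with zero right-hand side and positive left-hand side, and uses $y_m=y_0$ to get $\psi^{(2,2)}(x_0,y_0,x_m,y_m)=d(x_0,x_m)^2>0$. It then asserts that vanishing of all consecutive discrepancies amounts to ``parallel transport'' and hence forces $x_m=x_0$. That assertion is precisely your degenerate case, and the tripod refutes it: the two flat quadruples share the side $[x_1,y_1]$ but lie on different geodesic lines, which branching geodesics allow. The paper's passage from ``no uniform $\alphabar$'' to ``a single chain with vanishing right-hand side'' also has the same compactness gap; your explicit-constant route would have avoided that.

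Your instinct that the $m=2$ comparison is the crux is right. What is missing, though, is not a sharper CAT(0) inequality but an additional structural hypothesis that excludes configurations like the one above. Your argument is complete in a Hilbert space.
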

  \begin{proof}
  The proof is by contradiction.  First note that if \eqref{eq:afne composition} holds for
  some $\alphabar\in(0,1)$, then it holds for all $\alpha\geq\alphabar$.  So if there does not exist such
  an $\alphabar$, then for some $x_0\in D_1$ and  $x_{j}\in T_j(x_{j-1}), (j=1,2,\dots, m)$
\begin{equation}\label{e:Fred again}
\psi^{(p,c)}(x_{0}, y_{0}, x_m , y_m)>0 \und 0= \sum_{j=1}^m\paren{1+\epsilonhat_{j}}
\frac{1-\alpha_{j}}{\alpha_{j}}\psi^{(p,c)}(x_{j-1}, y_{j-1}, x_j , y_j).
\end{equation}
By construction $y_m\in T_{m}\circ \cdots\circ T_2\circ T_1(y_0)$ for $y_0\in\Fix \Tbar\cap D_1$, so $y_0=y_m$ and
$$\tfrac{c}{2} d(x_0, x_m)^p = \psi^{(p,c)}(x_{0}, y_{0}, x_m , y_m)>0.$$
Here we have used the fact that, because the mappings $T_j$  are pointwise a$\alpha$-fne at $y_{j}$ they
are single-valued there (Proposition \ref{t:single-valued pane}), so the notation
$y_{j+1}=T_j(y_{j})=\Tbar_j(y_1)$ is unambiguous and $y_0=y_m=T_m(y_{m-1})$.

Also note that $\psi^{(p,c)}$ is nonnegative by Lemma \ref{t:metric space properties},
so the sum in \eqref{e:Fred again} can only be zero when the transport discrepancy $\psi^{(p,c)}$ is zero for
each $j=1,2,\dots,m$.  This is the metric characterization of parallel transport of the pairs $(x_j, y_j)$ for
each $j=0,1,\dots, m$.  Since the point $y_{m-1}$ is transported to $y_m$ by the mapping $T_{m}$, and the
point $x_{m-1}$ is transported to the point $x_m$ ``parallel'' to $y_m=y_0$ by the same mapping, this implies that
$x_m=x_0$, in contradiction to $ d(x_0, x_m)^p >0$.
  \end{proof}

\begin{proposition}[compositions at fixed points - nonpositive curvature]
\label{t:nonexp cat(0)}
Let $(G, d)$ be a $p$-uniformly convex space with $p=c=2$, and let
$D_j\subset G$ be nonempty for $j=1,2, \dots,m$.   Let the composition
$\Tbar_j\equiv T_{j}\circ\cdots\circ T_2\circ T_1$  where $\mmap{T_j}{D_j}{G}$
for $j=1, 2\dots, m$ be well-defined for all $x\in D_1$.
For $j=1,2, \dots, m$, let $x_0, y_0\in D_1$, and let $T_j$
be pointwise a$\alpha$-fne  at $y_{j-1}\in D_{j}$ with
constant $\alpha_j$ and  violation $\epsilon_j$ on $D_{j}$,
 where $\{y_{j}\}= T_{j}(y_{j-1}) = \Tbar_j(y_0)$ and the violations $\epsilon_j$ satisfy
 \begin{equation} \label{e:general composite violation}
  \epsilonbar_m\equiv \paren{\prod_{j=1}^{m}\paren{1+\epsilon_{j}}}-1\leq 1.
 \end{equation}
 Denote $\epsilonhat_j\equiv \paren{\prod_{i=j+1}^{m}\paren{1+\epsilon_{i}}}-1$ and $x_j\in T_j(x_{j-1})$
 ($j=1,2,\dots,m$).  Then when $\Tbar_m$ possesses fixed points, it is pointwise a$\alpha$-fne
 at $y_0\in \Fix \Tbar_m$ with violation $\epsilonbar_m$ given by \eqref{e:general composite violation} on $D_1$
and constant $\alphabar$ satisfying \eqref{e:alphabar comp cat0}.
\end{proposition}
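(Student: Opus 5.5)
The plan is to chain the one-step a$\alpha$-fne inequalities of the $T_j$ along the two trajectories $x_0\mapsto x_1\mapsto\cdots\mapsto x_m$ and $y_0\mapsto y_1\mapsto\cdots\mapsto y_m$, and then absorb the accumulated transport discrepancies into a single discrepancy term for the composite mapping. Since $y_0\in\Fix\Tbar_m$ and each $T_j$ is single-valued at $y_{j-1}$ (Proposition \ref{t:single-valued pane}), we have $y_m=y_0$. By Proposition \ref{t:properties pafne}\eqref{t:properties pafne ii}, $\psi^{(2,2)}(x_0,y_0,x_m,y_m)=d(x_0,x_m)^2$. It therefore suffices to show
\[
d(x_m,y_0)^2\le (1+\epsilonbar_m)d(x_0,y_0)^2-\tfrac{1-\alphabar}{\alphabar}d(x_0,x_m)^2 .
\]

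First, I will prove by backward induction on $j$ the telescoped estimate
\[
d(x_m,y_m)^2\le (1+\epsilonbar_m)d(x_0,y_0)^2-\sum_{j=1}^m(1+\epsilonhat_j)\tfrac{1-\alpha_j}{\alpha_j}\psi^{(2,2)}(x_{j-1},y_{j-1},x_j,y_j).
\]
For $j=m$ this is exactly the a$\alpha$-fne inequality of $T_m$ at $y_{m-1}$. For the inductive step, I apply the inequality for $T_j$ at $y_{j-1}$ to bound $d(x_j,y_j)^2$ and multiply it by $\prod_{i>j}(1+\epsilon_i)=1+\epsilonhat_j$. Since every $\psi^{(2,2)}\ge0$ by Lemma \ref{t:metric space properties}, the discrepancy terms keep their sign and the violation factors multiply to $1+\epsilonbar_m$. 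This is the same bookkeeping as in Proposition \ref{t:compositionthm} and Corollary \ref{c:compositionthm}.

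Second, write $\Sigma\equiv\sum_{j}(1+\epsilonhat_j)\tfrac{1-\alpha_j}{\alpha_j}\psi^{(2,2)}(x_{j-1},y_{j-1},x_j,y_j)$. It remains to show $\tfrac{1-\alphabar}{\alphabar}d(x_0,x_m)^2\le\Sigma$, i.e. that \eqref{eq:afne composition} holds for some $\alphabar<1$. Solving this for $\alphabar$ gives $\alphabar\ge d(x_0,x_m)^2/(d(x_0,x_m)^2+\Sigma)$. Since $\epsilonhat_j\ge0$, $\Sigma$ is at least the unweighted sum appearing in \eqref{e:alphabar comp cat0}. Hence any $\alphabar$ satisfying \eqref{e:alphabar comp cat0} also satisfies this pointwise inequality for every $x_0\in D_1$. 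This yields the claim with violation $\epsilonbar_m$; the hypothesis $\epsilonbar_m\le1$ is only needed to match the normalization used later in Theorem \ref{t:msr convergence}.

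The main obstacle is ensuring that the supremum in \eqref{e:alphabar comp cat0} is strictly less than $1$, i.e. that $\Sigma$ cannot vanish, or be negligible, while $d(x_0,x_m)>0$. The ratio is trivially at most $1$ and is $0$ when $x_m=x_0$. Pointwise, the argument of Lemma \ref{l:compsite condition flat} excludes $\Sigma=0$ with $x_m\ne x_0$: zero discrepancy at every step forces parallel transport, hence $x_m=x_0$. For uniformity over $D_1$, I would first try to show the ratio is bounded away from $1$ using the bound $\psi^{(2,2)}\ge(d(x,y)-d(u,v))^2$ together with the triangle inequality. If this fails, I would simply take existence of $\alphabar<1$ in \eqref{e:alphabar comp cat0} as part of the statement's content, as the phrasing ``constant $\alphabar$ satisfying \eqref{e:alphabar comp cat0}'' suggests.
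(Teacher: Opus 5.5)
Your first two steps are correct and follow the paper's route. The weighted chaining is Corollary~\ref{c:compositionthm}, and solving \eqref{eq:afne composition} for $\alphabar$ is the paper's ``rearranging''. You are more careful than the paper here: the unweighted condition \eqref{e:alphabar comp cat0} implies \eqref{eq:afne composition} because $\epsilonhat_j\ge 0$ and $\psi^{(2,2)}\ge 0$, whereas rearranging \eqref{eq:afne composition} itself only yields the weighted version.

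The gap is the existence of $\alphabar<1$. The statement asserts it, and the paper takes it from Lemma~\ref{l:compsite condition flat}. Your proposed route via $\psi^{(2,2)}(x,y,u,v)\ge (d(x,y)-d(u,v))^2$ cannot work. That bound only registers changes of $d(x_j,y_j)$. It vanishes, for example, when every $y_j=y_0$ and the $x_j$ move on a sphere about $y_0$, while $d(x_0,x_m)$ can be arbitrary. What is needed is a displacement bound $d(x_0,x_m)^2\le C\sum_j\psi^{(2,2)}(x_{j-1},y_{j-1},x_j,y_j)$. In a Hilbert space $\psi^{(2,2)}(x,y,u,v)=\|(u-x)-(v-y)\|^2$, so telescoping with $y_m=y_0$ gives $C=m$ and hence an explicit $\alphabar$.

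In a curved CAT(0) space, however, neither that bound nor the pointwise implication you borrow from Lemma~\ref{l:compsite condition flat} (all $\psi_j=0$ forces $x_m=x_0$) survives, because of holonomy. Take the Euclidean cone of total angle $3\pi$, which is a Hadamard space:
\begin{enumerate}[(a)]
\item Let $y_j$ have radius $1$ and angle $3\pi j/4$ for $j=0,\dots,4$, so $y_4=y_0$.
\item Let $x_0$ have radius $1.1$ and angle $0$.
\item Let $x_j$ complete the flat parallelogram on $x_{j-1},y_{j-1},y_j$, i.e.\ translate the offset parallel along $[y_{j-1},y_j]$.
\end{enumerate}
The edges stay at distance $\cos(3\pi/8)>0.1$ from the apex, so each quadruple spans a genuinely flat parallelogram. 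Hence $\psi^{(2,2)}(x_{j-1},y_{j-1},x_j,y_j)=0$ and $d(x_j,y_j)=0.1$ for every $j$. But the polygon's total turning is $3\pi$, so the offset returns rotated by $\pi$ and $x_4$ has radius $0.9$, giving $d(x_0,x_4)=0.2$.

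With $D_j=\{x_{j-1},y_{j-1}\}$, each $T_j$ is $\alpha$-fne at $y_{j-1}$ for every $\alpha\in(0,1)$. Yet the ratio in \eqref{e:alphabar comp cat0} equals $1$ at $x_0$, and $\Tbar_4$ is not a$\alpha$-fne at $y_0$ with violation $0$ for any $\alphabar<1$. The paper's proof of that lemma also has the quantifier gap you noticed: absence of a uniform $\alphabar$ only produces ratios tending to $1$, not an exact zero of the sum.

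So existence cannot be proved for general $T_j$. What your argument does establish is the conditional statement: any $\alphabar<1$ satisfying \eqref{e:alphabar comp cat0} makes $\Tbar_m$ a$\alpha$-fne at $y_0$ with violation $\epsilonbar_m$. State existence of such an $\alphabar$ explicitly as a hypothesis rather than attributing it to Lemma~\ref{l:compsite condition flat}.
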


\begin{proof}
First note that since the mappings $T_j$  are pointwise a$\alpha$-fne at $y_{j}$, they
are single-valued there (Proposition \ref{t:single-valued pane}), so the notation
$y_{j}=T_j(y_{j-1})=\Tbar_j(y_0)$ is unambiguous.

By Lemma \ref{l:compsite condition flat} Assumption \ref{ass:regularity gen metric} holds, so by
Corollary \ref{c:compositionthm} the composite mapping $\Tbar_m$ is
pointwise a$\alpha$-fne at $y_0$.  Rearranging the terms in
\eqref{eq:afne composition} yields
\eqref{e:alphabar comp cat0}.
\end{proof}

\begin{remark}
  In Proposition \ref{t:nonexp cat(0)}, if each mapping $T_j$ is $\alpha$-fne
  (with $\epsilon_j=0$ for all $j$), then by \eqref{e:general composite violation}
  the composite mapping $\Tbar_m$ also has zero violation and is $\alpha$-fne.
\end{remark}

\begin{theorem}[convex combinations - nonpositive curvature]
\label{t:cvx comb flat}
Let $D\subset G$ where $(G, d)$ is a $p$-uniformly convex space with constant $p=c=2$ and
let $T_1, T_2: D\rightrightarrows G$.  Denote $T_\tau\equiv (1-\tau) T_2\oplus \tau T_1$ for
$\tau\in [0,1]$.
\begin{enumerate}[(i)]
\item\label{t:cvx comb flat i} Any convex combination of $T_2$ and $T_1$ is
pointwise almost nonexpansive at $y\in D$ with violation
$\overline\epsilon\equiv\max_{j}{\epsilon_j}$ on $D$ whenever $T_2$ and $T_1$
are pointwise almost nonexpansive there
with respective violations $\epsilon_1$, and $\epsilon_2$.

\item\label{t:cvx relaxation flat suff ane}
Let $T_2=\Id$ and let $T_1=T$ be pointwise almost nonexpansive at $y$ with
violation $\epsilon$ on $D$.
Then $T_\tau$ is pointwise a$\alpha$-fne at $y$
with constant
$\alpha=1-\tau$ and violation $\epsilon_\tau=\epsilon\tau$
whenever $\tau\in(0,1/2]$.

 \item\label{t:cvx relaxation Ttau - flat} Let $T_2=\Id$ and denote $T_1=T$.
 The mapping $T_\tau$ is pointwise a$\alpha$-fne at $y$ with constant $\alpha_\tau$ and
violation $\epsilon_\tau\in (0,1)$ on $D$ if and only if
\begin{equation}\label{eq:proto-mon Ttilde - flat}
\begin{aligned}
&T  \, \mbox{ is single-valued at $y$, and }\forall x\in D,  \forall x_+\in T(x)\\
& \paren{\frac{1}{2\tau}+\frac{\alpha_\tau}{2(1-\alpha_\tau)\tau}}d\paren{x_\tau, T_\tau(y)}^2\\
&\qquad \leq \paren{\tfrac{(1+\epsilon_\tau)\alpha_\tau}{2\tau(1-\alpha_\tau)}
+ \tfrac{(1-\tau)}{\tau} - \tfrac{1}{2\tau}}d(x, y)^2
 + \Delta^{(2,2)}\paren{x,y,x_+, T (y)},
\end{aligned}
\end{equation}
where $\alpha_\tau, \epsilon_\tau\in (0,1)$.

\item\label{t:cvx relaxation flat suff}
Let $T_2=\Id$ and let $T_1=T$ be pointwise a$\alpha$-fne at $y$ with constant
$\alpha\in(0,1)$ and violation $\epsilon$ on $D$.
Then $T_\tau$ is pointwise a$\alpha$-fne at $y$ with constant
$\alpha_\tau\in(0,1)$ and violation $\epsilon_\tau=\tau^2\epsilon$ if,
for all $x\in D$, for all $x_+\in T(x)$,
\begin{equation}\label{eq:Smile3}
\begin{aligned}
\tfrac{1-\alpha_\tau}{\alpha_\tau}\psi^{(2,2)}(x, y, x_\tau, T_\tau(y))
&\leq \tau^2\tfrac{1-\alpha}{\alpha}\psi^{(2,2)}(x, y, x_+, T(y))\\
&\qquad  +2(1-\tau)\tau d(x, y)^2 \\
  &\qquad -\, 2(1-\tau)\tau \Delta^{(2,2)}(x, y, x_+, T(y)).
\end{aligned}
\end{equation}
\end{enumerate}

\end{theorem}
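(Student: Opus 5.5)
Part (i) rests on a two-geodesic version of Lemma~\ref{l:cvx relaxation id}, then Lemma~\ref{t:metric space properties}. Part (iv) is a specialization of Proposition~\ref{t:cvx relaxation suff}. Part (iii) is an algebraic rewriting of Definition~\ref{d:pafne}(ii) via \eqref{eq:delta_to_psi}. Part (ii) is a reduction to Proposition~\ref{t:cvx relaxation suff}\eqref{t:cvx relaxation i} plus one geometric estimate, which I expect to be the hard step.

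For \eqref{t:cvx comb flat i}, take $u_i\in T_i(x)$ and $v_i\in T_i(y)$. Put $x_\tau=(1-\tau)u_2\oplus\tau u_1$ and $y_\tau=(1-\tau)v_2\oplus \tau v_1$. Repeating the proof of Lemma~\ref{l:cvx relaxation id} with $x,x_+$ replaced by $u_2,u_1$ (and likewise for $y$) gives, since $c=2$ kills the last term,
\[
d(x_\tau,y_\tau)^2\le (1-\tau)^2d(u_2,v_2)^2+\tau^2 d(u_1,v_1)^2+2(1-\tau)\tau\,\Delta^{(2,2)}(u_2,v_2,u_1,v_1).
\]
By \eqref{e:metric CS} the $\Delta$ term is at most $d(u_2,v_2)d(u_1,v_1)$. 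The right side is then bounded by $\big((1-\tau)d(u_2,v_2)+\tau d(u_1,v_1)\big)^2$. Almost nonexpansiveness bounds this by $\big((1-\tau)\sqrt{1+\epsilon_2}+\tau\sqrt{1+\epsilon_1}\big)^2d(x,y)^2\le(1+\overline\epsilon)d(x,y)^2$.

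For \eqref{t:cvx relaxation flat suff}, apply Proposition~\ref{t:cvx relaxation suff}\eqref{t:cvx relaxation Ttilde suff} with $c=2$. The last term of \eqref{eq:Smile2} vanishes, and \eqref{eq:Smile2} becomes literally \eqref{eq:Smile3}, so the conclusion is immediate.

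For \eqref{t:cvx relaxation Ttau - flat}, single-valuedness of $T$ at $y$ is necessary by Proposition~\ref{t:single-valued pane}, since $T_\tau(y)$ must be single-valued. Conversely, it makes the notation $T(y)$ meaningful. Next I expand the $\alpha$-fne inequality for $T_\tau$ at $y$ using $\psi^{(2,2)}(x,y,x_\tau,y_\tau)=d(x,y)^2+d(x_\tau,y_\tau)^2-2\Delta^{(2,2)}(x,y,x_\tau,y_\tau)$ from \eqref{eq:delta_to_psi}. I then need to convert $\Delta^{(2,2)}(x,y,x_\tau,y_\tau)$ into $\Delta^{(2,2)}(x,y,x_+,T(y))$. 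Along each geodesic, $d(x,x_\tau)=\tau d(x,x_+)$, and the squared distances $d(y,x_\tau)^2$, $d(x,y_\tau)^2$ are controlled by the $2$-uniform convexity inequality. Since the target is an if-and-only-if, I would check whether the definition of $\Delta$ allows this conversion to be written as an identity in the relevant combination, rather than as an inequality in one direction. Multiplying through by $\alpha_\tau/(2(1-\alpha_\tau)\tau)$ and collecting the $d(x,y)^2$ and $d(x_\tau,y_\tau)^2$ coefficients should then give \eqref{eq:proto-mon Ttilde - flat}. This bookkeeping is routine but must be checked carefully.

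For \eqref{t:cvx relaxation flat suff ane}, Theorem~\ref{t:cvx relaxation} with $c=2$ gives violation $\epsilon_\tau=\tau(2\tau+2(1-\tau)+\epsilon\tau-2)=\tau^2\epsilon$. This is at most the claimed $\tau\epsilon$, so the claimed violation holds a fortiori by monotonicity in $\epsilon$. By Proposition~\ref{t:cvx relaxation suff}\eqref{t:cvx relaxation i} it then suffices to show
\[
\psi^{(2,2)}(x,y,x_\tau,y_\tau)\le(1-\tau)^2\,\psi^{(2,2)}(x,y,x_+,y_+).
\]
In Hilbert space the left side equals $\tau^2\psi^{(2,2)}(x,y,x_+,y_+)$, so the inequality is exactly $\tau\le 1/2$. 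The main obstacle is proving the metric analogue $\psi^{(2,2)}(x,y,x_\tau,y_\tau)\le\tau^2\psi^{(2,2)}(x,y,x_+,y_+)$. I would attack it by expanding both sides in squared distances. The terms $d(x,x_\tau)^2,d(y,y_\tau)^2$ scale exactly by $\tau^2$. For $d(x_\tau,y_\tau)^2$ I would use the two-geodesic estimate from part (i). The difficulty is the subtracted terms $d(y,x_\tau)^2$ and $d(x,y_\tau)^2$, which require lower bounds, whereas CAT(0) comparison naturally gives upper bounds. For these I would try the four-point (Reshetnyak) majorization of the quadrilateral $x,x_+,y_+,y$ by a convex Euclidean quadrilateral, and transfer the Euclidean identity.
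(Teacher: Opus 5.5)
Your arguments for (i) and (iv) are correct and coincide with the paper's: the two-geodesic form of Lemma~\ref{l:cvx relaxation id} plus \eqref{e:metric CS}, and \eqref{eq:Smile2} at $c=2$. There is also a small slip in (ii): at $c=2$ the factor multiplying $\epsilon$ inside \eqref{e:Sweet} is $\tau+\tfrac{c}{2}(1-\tau)=1$, so $\epsilon_\tau=\tau\epsilon$ exactly, not $\tau^2\epsilon$. This is harmless.

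\textbf{Part (ii).} The estimate you single out, $\psi^{(2,2)}(x,y,x_\tau,y_\tau)\le\tau^2\psi^{(2,2)}(x,y,x_+,y_+)$, is false in CAT$(0)$ spaces, and so is the conclusion of (ii) itself.

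Let $G$ be the tripod obtained by gluing three segments of length $3$ at a point $o$. It is a compact $\Rbb$-tree, hence CAT$(0)$, so $p=c=2$. Put $x,y$ at distance $1$ from $o$ on legs one and two, put $x_+,y_+$ at distances $3$ and $2$ from $o$ on leg three, and set $T(x)=x_+$, $T(y)=y_+$. Since $d(x_+,y_+)=1<2=d(x,y)$, $T$ is nonexpansive at $y$ on $D=\{x,y\}$. It even extends to a nonexpansive self-map of $G$: send $z$ to the point of leg three at distance $2+\min\{1,\mathrm{dist}(z,\text{legs two and three})\}$ from $o$.

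Take $\tau=1/2$. Then $x_\tau$ and $y_\tau$ are the points of leg three at distances $1$ and $1/2$ from $o$, and one computes
$\psi^{(2,2)}(x,y,x_\tau,y_\tau)=17/4$, $\Delta^{(2,2)}(x,y,x_\tau,y_\tau)=0$, $\psi^{(2,2)}(x,y,x_+,y_+)=5$, $\Delta^{(2,2)}(x,y,x_+,y_+)=0$.
Thus $17/4>\tau^2\cdot 5$. Moreover, the claimed property ($\alpha=1/2$, violation $0$) would require $d(x_\tau,y_\tau)^2=1/4\le d(x,y)^2-\psi^{(2,2)}(x,y,x_\tau,y_\tau)=-1/4$, which is false.

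Reshetnyak majorization cannot rescue this, for exactly the reason you anticipated. The terms $d(y,x_\tau)^2$ and $d(x,y_\tau)^2$ enter with a minus sign, and comparison bounds them only from above; in a tree, branching makes them genuinely smaller than in the Euclidean model.

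The paper's proof of (ii) takes the same route: it reduces to \eqref{eq:Bitter2 0} and asserts your estimate as \eqref{eq:warm gun 0} ``by expanding''. So you have reproduced its gap rather than introduced a new one.

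What survives is the fixed-point version. If $y\in\Fix T$, then $\psi^{(2,2)}(x,y,x_\tau,y)=\tau^2d(x,x_+)^2$ by Proposition~\ref{t:properties pafne}\eqref{t:properties pafne ii}. The $2$-uniform convexity inequality along $[x,x_+]$, together with $d(y,x_+)^2\le(1+\epsilon)d(x,y)^2$, gives $d(x_\tau,y)^2\le(1+\tau\epsilon)d(x,y)^2-\tau(1-\tau)d(x,x_+)^2$. This suffices exactly when $\tau(1-\tau)\ge\tau^3/(1-\tau)$, i.e.\ $\tau\le 1/2$.

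\textbf{Part (iii).} The conversion you hope is an identity is $\Delta^{(2,2)}(x,y,x_\tau,y_\tau)=(1-\tau)d(x,y)^2+\tau\Delta^{(2,2)}(x,y,x_+,y_+)$. It holds in Hilbert space, but in a CAT$(0)$ space, applying the uniform convexity inequality to $d(y,x_\tau)^2$ and $d(x,y_\tau)^2$ yields only ``$\le$''; in the tripod the two sides are $0$ and $2$.

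That inequality gives the ``only if'' direction and nothing more, and the same example refutes the ``if'' direction. With $\alpha_\tau=\tau=1/2$, \eqref{eq:proto-mon Ttilde - flat} reads $1/2\le 4(1+\epsilon_\tau)$, which holds. The a$\alpha$-fne inequality, however, reads $1/4\le 4\epsilon_\tau-1/4$, which fails for every $\epsilon_\tau<1/8$.

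Your necessity argument for single-valuedness of $T$ at $y$ also needs non-branching geodesics. In a tree, two points of $T(y)$ equidistant from $y$, whose geodesics from $y$ split beyond the point at fraction $\tau$, give a singleton $T_\tau(y)$.

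The paper's proof of (iii) is only a pointer to Theorem~\ref{t:cvx relaxation}, which is a one-sided estimate, so there is nothing there to recover either. Parts (ii) and (iii) need additional hypotheses, for instance $y\in\Fix T$ in (ii), before any proof can go through.
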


\begin{proof}
Part \eqref{t:cvx comb flat i}.
Since $T_1$ and $T_2$ are pointwise almost nonexpansive at $y$, they are single-valued there (Proposition \ref{t:single-valued pane})
so $T_j(y)$ ($j=1,2$) is unambiguous.
Let $\tau\in(0,1)$ and define $T_\tau \equiv (1-\tau)T_2\oplus\tau T_1$.
Applying \eqref{e:Big Thief} with $p=c=2$ yields for all $x\in D$ and all
$x_1\in T_1(x)$, $x_2\in T_2(x)$,
\begin{align*}
d(x_\tau,T_\tau (y))^2&\leq
(1-\tau)^2d(x_2,T_2(y))^2+\tau^2 d(x_1,T_1(y))^2\\
&\qquad (1-\tau)\tau 2\Delta^{(2,2)}(x_2, T_2(y), x_1, T_1(y)),
\end{align*}
where $\Delta^{(2,2)}$ is defined by \eqref{eq:delta}.
On the other hand $(G,d)$ is $p$-uniformly convex with $p=c=2$, so \eqref{e:metric CS} holds, and in particular,
for any points in $G$
$$ 2\Delta^{(2,2)}(x_2, T_2(y), x_1, T_1(y))\leq  2d(x_2,T_2(y))d(x_1,T_1(y)).$$
Therefore the right hand side of the inequality above can be written as a perfect square
$$
d(x_\tau,T_\tau (y))^2\leq \paren{(1-\tau)d(x_2,T_2(y))+\tau d(x_1,T_1(y))}^2
\quad\forall x\in D.
$$
Since $T_2$ and $T_1$ are pointwise almost nonexpansive at $y$ with violations
$\epsilon_j\leq \overline\epsilon\equiv\max_{j}{\epsilon_j}$ on $D$, so
for all $x\in D$
$$
d(x_\tau,T_\tau (y))^2\leq ((1-\tau)\sqrt{1+\epsilonbar}d(x,y)+\tau \sqrt{1+\epsilonbar}d(x,y))^2
=(1+\epsilonbar)d(x,y)^2.
$$
and hence $d(x_\tau,T_\tau (y))\leq  \sqrt{1+\epsilonbar}d(x,y)$ for all $x\in D$ as claimed.

Part \eqref{t:cvx relaxation flat suff ane}.  By Proposition
\ref{t:cvx relaxation suff}, $T_\tau$ is a$\alpha$-fne with
$\alpha=1-\tau\in(0,1]$ and violation $\epsilon_\tau$ given by \eqref{e:Sweet} if,
for all $x\in D$, for all $x_+\in T(x)$, \eqref{eq:Bitter2} holds. For the case with $p=c=2$,
\eqref{e:Sweet} is just $\epsilon_\tau=\epsilon\tau$, and
\eqref{eq:Bitter2} is
 \begin{equation}\label{eq:Bitter2 0}
\begin{aligned}
\tfrac{\tau}{1-\tau}\psi^{(2,2)}(x, y, x_\tau, T_\tau(y))&\leq
(1-\tau)\tau\psi^{(2,2)}(x,y,x_+, T(y)).
\end{aligned}
\end{equation}
By expanding $x_\tau$ and $T_\tau(y)$,
\begin{equation}\label{eq:warm gun 0}
\begin{aligned}
&\psi^{(2,2)}(x,y,x_\tau, T_\tau(y))\leq
 \tau^2\psi^{(2,2)}(x,y, x_+,T(y)),
\end{aligned}
\end{equation}
so \eqref{eq:Bitter2 0} holds when
 \begin{equation*}
\begin{aligned}
\tfrac{\tau}{1-\tau}\tau^2 \psi^{(2,2)}(x,y, x_+,T(y))
&\leq (1-\tau)\tau \psi^{(2,2)}(x,y, x_+,T(y)),
\end{aligned}
\end{equation*}
which holds when
$$
 \tfrac{\tau}{1-\tau}\tau^2\leq (1-\tau)\tau,
$$
or, in other words, when $\tau\in(0,1/2]$ as claimed.

Parts \eqref{t:cvx relaxation Ttau - flat} and \eqref{t:cvx relaxation flat suff} are just
the respective specializations of Theorem \ref{t:cvx relaxation} and Proposition
\ref{t:cvx relaxation suff}  with $p=c=2$.
\end{proof}

\subsection{Prox Mappings}
We recall the $p$-prox mapping  introduced in \eqref{e:prox-p}.  This is nonempty and single-valued for a proper, lsc, and convex
function $f$  \cite[Proposition 2.7]{Izuchukwu19}.
This operator plays a central role in splitting algorithms for optimization problems.
In linear spaces the prox mapping  has been extensively studied.  Prox mappings of convex functions in linear spaces
are  globally nonexpansive and
$\alpha$-fne.  In nonlinear spaces this no longer holds, but it does hold {\em almost} \cite{BLL} and \cite{LauLuk21}.  A few key facts
about this mapping are collected next.

\begin{lemma}[prox inequality]\label{l:prox_char}
  Let $(G, d)$ be a boundedly compact, $p$-uniformly convex metric space with $p \in (1, \infty)$ and $c > 0$. Let $f : G \to \mathbb{R}$ be proper, lsc, and convex, and fix $\lambda \in (0, \infty)$. Then $x_+ = \prox^p_{f, \lambda}(x)$ if and only if
  \begin{equation}\label{eq:prox_char}
  \frac{4}{cp\lambda^{p-1}} \Delta^{(p,c)}(x_+, x, x_+, y) - \frac{2-c}{2p \lambda^{p-1}} d(x_+, y)^p \leq f(y) - f(x_+) \quad \forall y \in G.
  \end{equation}
  \end{lemma}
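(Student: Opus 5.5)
We prove both directions through a variational argument along the geodesic from $x_+$ to an arbitrary $y\in G$. Write $F(z)\equiv f(z)+\tfrac{1}{p\lambda^{p-1}}d(z,x)^p$. By definition, $x_+=\prox^p_{f,\lambda}(x)$ if and only if $F(x_+)\le F(z)$ for all $z\in G$.

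For the forward direction, fix $y\in G$ and $\tau\in(0,1]$, and let $z_\tau\equiv(1-\tau)x_+\oplus\tau y$. Minimality gives $F(x_+)\le F(z_\tau)$. We bound each piece of $F(z_\tau)$ separately. Convexity of $f$ gives $f(z_\tau)\le (1-\tau)f(x_+)+\tau f(y)$. For the distance term we use $p$-uniform convexity with reference point $x$:
\[
d(z_\tau,x)^p\le (1-\tau)d(x_+,x)^p+\tau d(y,x)^p-\tfrac{c}{2}\tau(1-\tau)d(x_+,y)^p .
\]
Substituting both bounds, cancelling $f(x_+)$ and $(1-\tau)d(x_+,x)^p$, and dividing by $\tau>0$ yields
\[
0\le f(y)-f(x_+)+\tfrac{1}{p\lambda^{p-1}}\Big(d(y,x)^p-d(x_+,x)^p-\tfrac{c}{2}(1-\tau)d(x_+,y)^p\Big).
\]
We then let $\tau\downarrow 0$.

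It remains to rewrite the bracket in terms of $\Delta^{(p,c)}$. From the definition,
\[
\tfrac{4}{c}\Delta^{(p,c)}(x_+,x,x_+,y)=d(x_+,y)^p+d(x,x_+)^p-0-d(x,y)^p .
\]
Hence
\[
d(y,x)^p-d(x_+,x)^p=d(x_+,y)^p-\tfrac{4}{c}\Delta^{(p,c)}(x_+,x,x_+,y).
\]
Substituting this, the bracket becomes
\[
-\tfrac{4}{c}\Delta^{(p,c)}(x_+,x,x_+,y)+\Big(1-\tfrac{c}{2}\Big)d(x_+,y)^p .
\]
Therefore the inequality reads
\[
\tfrac{4}{cp\lambda^{p-1}}\Delta^{(p,c)}(x_+,x,x_+,y)-\tfrac{2-c}{2p\lambda^{p-1}}d(x_+,y)^p\le f(y)-f(x_+),
\]
which is exactly \eqref{eq:prox_char}. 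The one point to be careful about is that we may divide by $\tau$ before passing to the limit. This is legitimate because every term is finite: $f$ is real-valued here, or else we take $y\in\operatorname{dom} f$, since the case $f(y)=+\infty$ is trivial.

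For the converse, assume \eqref{eq:prox_char} holds for all $y$. Running the same identity backwards, the inequality is equivalent to
\[
f(y)+\tfrac{1}{p\lambda^{p-1}}d(y,x)^p\ \ge\ f(x_+)+\tfrac{1}{p\lambda^{p-1}}d(x_+,x)^p+\tfrac{c}{2p\lambda^{p-1}}d(x_+,y)^p .
\]
Since the last term is nonnegative, $F(y)\ge F(x_+)$ for every $y\in G$, so $x_+$ is a minimizer of $F$. Uniqueness of the prox point (single-valuedness, \cite[Proposition 2.7]{Izuchukwu19}) then gives $x_+=\prox^p_{f,\lambda}(x)$; alternatively, the reverse inequality itself yields uniqueness directly, because the extra term $\tfrac{c}{2p\lambda^{p-1}}d(x_+,y)^p$ is strictly positive whenever $y\ne x_+$. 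We expect the main obstacle to be nothing deeper than the bookkeeping of the $\Delta$ identity and the limit $\tau\downarrow0$. The hypothesis of bounded compactness is needed only to guarantee existence of the prox point.
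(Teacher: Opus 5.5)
Your proposal is correct and follows essentially the same route as the paper's proof. The paper parametrizes $z_\tau=(1-\tau)y\oplus\tau x_+$, divides by $1-\tau$ and lets $\tau\nearrow 1$, which is your argument with $\tau\leftrightarrow 1-\tau$. Your converse, which drops the nonnegative term $\tfrac{c}{2p\lambda^{p-1}}d(x_+,y)^p$ to recover minimality (and uniqueness) of $x_+$, is a more explicit version of the paper's brief remark that \eqref{eq:prox_char} is the worst case over $\tau$.
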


  \begin{proof}
  To begin, $x_+ = \prox_{f, \lambda}(x)$ if and only if for all $z \in G$
  $$
  f(x_+) + \tfrac{1}{p \lambda^{p-1}} d(x, x_+)^p \leq f(z) + \tfrac{1}{p \lambda^{p-1}} d(z, x)^p.
  $$
In particular, for $z_\tau := (1 - \tau)y + \tau x_+$, with $y \in G$, rearranging the proximal inequality and using
  convexity of $f$ together with the uniform convexity of the space with constant $c$ yields
  $$
  \begin{aligned}
  f(x_+) &\leq f(z_\tau) + \tfrac{1}{p \lambda^{p-1}} d(x, z_\tau)^p - \tfrac{1}{p \lambda^{p-1}} d(x, x_+)^p \\
  &\leq (1 - \tau)f(y) + \tau f(x_+) + \tfrac{1}{p \lambda^{p-1}} d(z_\tau, x)^p - \tfrac{1}{p \lambda^{p-1}} d(x, x_+)^p \\
  &\leq (1 - \tau)f(y) + \tau f(x_+) + \tfrac{1-\tau}{p \lambda^{p-1}} d(y, x)^p + \tfrac{\tau}{p \lambda^{p-1}} d(x, x_+)^p \\
  &\quad - \tfrac{c\tau(1 - \tau)}{2p\lambda^{p-1}} d(x_+, y)^p - \tfrac{1}{p\lambda^{p-1}} d(x, x_+)^p.
  \end{aligned}
  $$
  Equivalently, gathering terms gives
  $$
  \frac{1 - \tau}{p\lambda^{p-1}} \left( d(x_+, x)^p - d(x, y)^p + \tfrac{c}{2}\tau d(x_+,y)^p \right) \leq (1 - \tau)(f(y) - f(x_+)).
  $$
  Dividing both sides by $(1 - \tau)$ and letting $\tau \nearrow 1$ yields
  $$
  \frac{1}{p\lambda^{p-1}} \left( \frac{c}{2}d(x_+, y)^p + d(x_+, x)^p - d(y, x)^p \right) \leq f(y) - f(x_+).
  $$
  Using $c/2 = 1 - (2 - c)/2$ and the definition of $\Delta^{(p,c)}$ from \eqref{eq:delta} completes the proof.

  For the converse, note that \eqref{eq:prox_char} is the worst case for $\tau \in (0, 1)$, so it is sufficient for preceding inequalities, hence the reverse implication also holds.
  \end{proof}

\begin{theorem}[prox mappings of convex functions: general metric spaces]\label{t:prox props gen}
  Let $(G,d)$ be a boundedly compact, $p$-uniformly convex metric space with constant $p>1$ and $c>0$
  and let $f\colon G \rightarrow \Rbb$ be proper, convex and lsc. Then the following hold.
  \begin{enumerate}[(i)]
    \item\label{t:prox props gen 0} $\arg\min f \subset \Fix \prox_{f,\lambda}^p.$
    \item\label{t:prox props gen 1} For all $x,y \in G$,
    \begin{equation} \label{metrically monotone}
      \Delta^{(p,c)}(x, y, x_+, y_+) \geq \frac{c^2}{4} d(x_+,y_+)^p
    \end{equation}
    where $x_+=prox_{f,\lambda}^p(x)$, and similarly for $y_+$.
    \item\label{t:prox props gen 2}
    Let the parameter $c$ be in the semi-closed interval $(3/2,2]$. Then the prox mapping is a$\alpha$-fne on $G$,
    that is, for $x_+= prox^p_{f,\lambda}(x)$ and $y_+= prox^p_{f,\lambda}(y)$,
     \begin{equation*}
    \begin{aligned}
    d(x_+,y_+)^p \leq (1 + \epsilon_c) d(x,y)^p - \frac{1-\alpha_c}{\alpha_c} \psi^{(p,c)}(x,y,x_+,y_+)\quad \forall x,y \in G,
    \end{aligned}
    \end{equation*}
    with constant $\alpha_c$ and violation $\epsilon_c$ given by
     \begin{equation}\label{e:alpha-epsilon_c 0}
    \begin{aligned}
    \alpha_c = \frac{c(c-1)}{2+c(c-1)}, \quad \epsilon_c =\frac{2-c}{c-1}.
    \end{aligned}
    \end{equation}
    \end{enumerate}
\end{theorem}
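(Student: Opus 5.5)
Part (i) is immediate from the definition \eqref{e:prox-p}. If $\xbar\in\argmin f$, then for every $y$ we have $f(\xbar)+0\le f(y)+\tfrac{1}{p\lambda^{p-1}}d(y,\xbar)^p$, with equality only at $y=\xbar$ among minimizers of the proximal objective. Since $\prox^p_{f,\lambda}$ is single-valued (\cite[Proposition 2.7]{Izuchukwu19}), it follows that $\prox^p_{f,\lambda}(\xbar)=\xbar$.

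For part (ii) I will apply Lemma \ref{l:prox_char} twice. First, at $x$ with test point $y_+$:
\[
\tfrac{4}{cp\lambda^{p-1}}\Delta^{(p,c)}(x_+,x,x_+,y_+)-\tfrac{2-c}{2p\lambda^{p-1}}d(x_+,y_+)^p\le f(y_+)-f(x_+).
\]
Second, at $y$ with test point $x_+$, which gives the symmetric inequality. Adding the two cancels the $f$ terms. Next I expand each $\Delta$ through \eqref{eq:delta}, using $d(x_+,x_+)=0$. The first gives $\tfrac{c}{4}\big(d(x_+,y_+)^p+d(x,x_+)^p-d(x,y_+)^p\big)$, and the second is analogous. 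Their sum is
\[
\tfrac{c}{4}\Big(2d(x_+,y_+)^p+d(x,x_+)^p+d(y,y_+)^p-d(x,y_+)^p-d(y,x_+)^p\Big).
\]
The terms other than $2d(x_+,y_+)^p$ should reassemble into a multiple of $-\Delta^{(p,c)}(x,y,x_+,y_+)$, but this only works after a correction involving $d(x,x_+),d(y,y_+)$. This is the delicate point: I expect to need the identity \eqref{eq:delta_to_psi} or an additional uniform-convexity estimate to trade the cross terms. After multiplying through by $p\lambda^{p-1}$ and collecting the $d(x_+,y_+)^p$ coefficients, $\tfrac{4}{c}\cdot\tfrac{c}{4}\cdot 2-(2-c)=c$, I aim to arrive at $\tfrac{4}{c}\Delta^{(p,c)}(x,y,x_+,y_+)\ge c\,d(x_+,y_+)^p$, which is \eqref{metrically monotone}. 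I would first check the Hilbert case $p=c=2$, where this reduces to firm nonexpansivity $\langle x-y,x_+-y_+\rangle\ge\|x_+-y_+\|^2$, to calibrate the constants.

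For part (iii) I substitute \eqref{metrically monotone} into \eqref{eq:delta_to_psi}:
\[
\psi^{(p,c)}(x,y,x_+,y_+)=\tfrac c2\big(d(x,y)^p+d(x_+,y_+)^p\big)-2\Delta^{(p,c)}\le \tfrac c2 d(x,y)^p+\big(\tfrac c2-\tfrac{c^2}{2}\big)d(x_+,y_+)^p .
\]
Multiplying by $\beta\equiv\tfrac{1-\alpha_c}{\alpha_c}=\tfrac{2}{c(c-1)}$ gives
\[
\beta\psi^{(p,c)}\le \tfrac{1}{c-1}d(x,y)^p-d(x_+,y_+)^p .
\]
Rearranging yields $d(x_+,y_+)^p\le \tfrac{1}{c-1}d(x,y)^p-\beta\psi^{(p,c)}$, and $\tfrac1{c-1}=1+\tfrac{2-c}{c-1}=1+\epsilon_c$, exactly as claimed. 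The restriction $c>3/2$ presumably serves to keep $\epsilon_c<1$, and hence compatible with the later composition bounds. I would check it but do not expect it to be needed in this step.

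The main obstacle is the bookkeeping in part (ii): matching the sum of the two prox inequalities to $\Delta^{(p,c)}(x,y,x_+,y_+)$ with the exact constant $c^2/4$.
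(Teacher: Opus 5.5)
Your route is the paper's in all three parts, and (i) and (iii) go through as written. Part (ii), however, is left unfinished, and the reason you give is false. You say the leftover terms reassemble into $-\Delta^{(p,c)}(x,y,x_+,y_+)$ ``only after a correction involving $d(x,x_+),d(y,y_+)$''. That comes from a misreading of \eqref{eq:delta}. With $(x,y,u,v)=(x,y,x_+,y_+)$ the definition reads
\[
\tfrac{4}{c}\Delta^{(p,c)}(x,y,x_+,y_+)=d(x,y_+)^p+d(y,x_+)^p-d(x,x_+)^p-d(y,y_+)^p ,
\]
so $d(x,x_+)^p$ and $d(y,y_+)^p$ are exactly the negative terms $-d(x,u)^p-d(y,v)^p$ of $\Delta^{(p,c)}$. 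The sum you computed is therefore already
\[
\Delta^{(p,c)}(x_+,x,x_+,y_+)+\Delta^{(p,c)}(y_+,y,y_+,x_+)=\tfrac{c}{2}\,d(x_+,y_+)^p-\Delta^{(p,c)}(x,y,x_+,y_+).
\]
Adding the two instances of \eqref{eq:prox_char} and multiplying by $p\lambda^{p-1}$ then gives
\[
c\,d(x_+,y_+)^p\le d(x,y_+)^p+d(y,x_+)^p-d(x,x_+)^p-d(y,y_+)^p=\tfrac{4}{c}\Delta^{(p,c)}(x,y,x_+,y_+),
\]
which is \eqref{metrically monotone}. Neither \eqref{eq:delta_to_psi} nor any further uniform-convexity estimate is needed; this is exactly the paper's one-line argument. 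Your coefficient count $2-(2-c)=c$ was correct, and the Hilbert check you planned ($\Delta^{(2,2)}(x,y,u,v)=\langle x-y,u-v\rangle$) would have confirmed the match.

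In (iii) your computation coincides with the paper's, but your closing remark needs one correction. Multiplying by $\beta=\tfrac{2}{c(c-1)}$ preserves the inequality only because $\beta>0$, i.e.\ $c>1$, which is also what puts $\alpha_c$ in $(0,1)$. So the hypothesis on $c$ is used in that step. The sharper bound $c>3/2$ serves only to make $\epsilon_c<1$.
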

\begin{proof}
  To prove \eqref{t:prox props gen 0}, let $\xbar\in \argmin f$. Then
  \begin{align*}
    f (\xbar) = f(\xbar) + \frac{1}{p\lambda^{p-1}} d(\xbar,\xbar)^p
    \leq f(y) + \frac{1}{p\lambda^{p-1}} d(y,\xbar)^p \quad \forall y\in G.
  \end{align*}
  So $\xbar=\prox^p_{f,\lambda}(\xbar)$.

Part \eqref{t:prox props gen 1}.
For $x\in G$ and $x_+=\prox_{f,\lambda}^p(x)$, by Lemma \ref{l:prox_char}, the inequality \eqref{eq:prox_char} holds for all $y\in G$,
and in particular for $y_+=\prox_{f,\lambda}^p(y)$ because
$$
\frac{4}{cp\lambda^{p-1}} \Delta^{(p,c)}(x_+, x, x_+, y_+) - \frac{2-c}{2p \lambda^{p-1}} d(x_+, y_+)^p \leq f(y_+) - f(x_+) \quad \forall y \in G.
$$
Similarly, exchanging the roles of $x$ and $y$,
$$
\frac{4}{cp\lambda^{p-1}} \Delta^{(p,c)}(y_+, y, y_+, x_+) - \frac{2-c}{2p \lambda^{p-1}} d(x_+, y_+)^p \leq f(x_+) - f(y_+) \quad \forall x \in G.
$$
Adding the two inequalities yields
$$
c\, d(x_+, y_+)^p \leq d(x, y_+)^p + d(y, x_+)^p - d(x, x_+)^p - d(y, y_+)^p = \frac{4}{c} \Delta^{(p,c)}(x, y, x_+, y_+),
$$
which proves \eqref{t:prox props gen 1}.

For part \eqref{t:prox props gen 2}, writing inequality \eqref{metrically monotone} in terms of the
transport discrepancy in \eqref{eq:delta_to_psi} gives
\begin{align*}
\frac{c^2}{4} d(x_+,y_+)^p &\leq \frac{c}{4} (d(x,y)^p + d(x_+,y_+)^p) - \frac{1}{2}\psi^{(p,c)}(x, y, x_+, y_+),\\
d(x_+,y_+)^p &\leq \frac{1}{c-1} d(x,y)^p - \frac{2}{c(c-1)}\psi^{(p,c)}(x, y, x_+, y_+).\\
\end{align*}
Let $1+\epsilon = 1/(c-1)$ and $(1-\alpha) = 2/(c(c-1))$. Then the prox-mapping is a$\alpha$-fne with constants in \eqref{e:alpha-epsilon_c 0}.
In order to ensure that $\epsilon<1$, it suffices to require $c\in(3/2,2]$.

  \end{proof}
\begin{proposition}[proximal distance to a set]\label{ex:prox_sqdist_set}
Let $(G, p)$ be $p$-uniformly convex metric space with
$p\in (1,\infty)$ and $c>0$, and let $\Omega\subset G$ be closed.  The prox mapping of the Moreau-Yosida envelope of the indicator
of the set $\Omega$ has the representation
\begin{equation}\label{e:prox of the distance}
\begin{aligned}
&\prox^p_{\tfrac{1}{p}d(\cdot, \Omega)^p, \lambda}(x) = \set{\tfrac{1}{1+\lambda}x\oplus \tfrac{\lambda}{1+\lambda} y}{y\in P_\Omega(x)}.
\end{aligned}
\end{equation}
\end{proposition}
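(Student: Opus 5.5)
The plan is to reduce the minimization defining $\prox^p_{\tfrac1p d(\cdot,\Omega)^p,\lambda}(x)$ to a one-dimensional problem in the two scalars $a=d(z,\Omega)$ and $b=d(z,x)$. We then show that the scalar lower bound is attained exactly at the points $\tfrac{1}{1+\lambda}x\oplus\tfrac{\lambda}{1+\lambda}y$ with $y\in P_\Omega(x)$.

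\emph{Assumption on projections.} Throughout I assume, as implicit in the statement, that $P_\Omega(z)\neq\emptyset$ for every $z$ (for instance when $G$ is boundedly compact and $\Omega$ is closed). Write $s\equiv d(x,\Omega)$. The case $s=0$ is trivial, so assume $s>0$.

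\emph{Step 1: lower bound.} Let $z\in G$ be arbitrary and pick $w\in P_\Omega(z)$. By the triangle inequality,
\[
s\le d(x,w)\le d(x,z)+d(z,w)=b+a .
\]
Hence the objective
\[
\Phi(z)\equiv \tfrac1p a^p+\tfrac{1}{p\lambda^{p-1}}b^p
\]
is bounded below by $\min\{\tfrac1p a^p+\tfrac{1}{p\lambda^{p-1}}b^p : a,b\ge0,\ a+b\ge s\}$. This is a strictly convex scalar problem, and the minimum occurs on the line $a+b=s$. The first-order condition $a^{p-1}=b^{p-1}/\lambda^{p-1}$ gives $b=\lambda a$, so the unique minimizer is
\[
a^*=\tfrac{s}{1+\lambda},\qquad b^*=\tfrac{\lambda s}{1+\lambda},
\]
with minimal value $\tfrac{s^p}{p(1+\lambda)^{p-1}}$.

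\emph{Step 2: attainment.} For $y\in P_\Omega(x)$ set $z=\tfrac{1}{1+\lambda}x\oplus\tfrac{\lambda}{1+\lambda}y$. Then $d(z,x)=\tfrac{\lambda s}{1+\lambda}$ and $d(z,\Omega)\le d(z,y)=\tfrac{s}{1+\lambda}$. Plugging in, $\Phi(z)$ is at most the lower bound, so every such $z$ is a minimizer. This proves the inclusion ``$\supseteq$''.

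\emph{Step 3: characterization of all minimizers.} Suppose $z$ is a minimizer and pick $w\in P_\Omega(z)$. Equality must hold throughout Step 1. By uniqueness of the scalar minimizer, $d(z,x)=b^*$ and $d(z,w)=a^*$. Also $d(x,w)=s$, so $w\in P_\Omega(x)$. Finally, $d(x,z)+d(z,w)=d(x,w)$. In a uniquely geodesic space, this triangle equality forces $z$ to lie on the geodesic from $x$ to $w$, at distance $\tfrac{\lambda s}{1+\lambda}$ from $x$. Thus $z=\tfrac{1}{1+\lambda}x\oplus\tfrac{\lambda}{1+\lambda}w$, which gives ``$\subseteq$''.

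\emph{Main obstacle.} The delicate point is this last step: concluding from equality in the triangle inequality that $z$ lies on the geodesic. I would justify it by concatenating the geodesics $[x,z]$ and $[z,w]$. This concatenation is a curve of length $d(x,w)$, hence a minimal path, and therefore equals the unique geodesic from $x$ to $w$.

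Note that no convexity of the function is used, and none is available, since $d(\cdot,\Omega)^p$ need not be convex for nonconvex $\Omega$. The set-valuedness of the prox is exactly inherited from that of $P_\Omega$.
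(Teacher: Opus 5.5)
Your proof is correct, and it takes a different route from the paper's. The paper fixes $y\in P_\Omega(x)$ and asserts, without argument, that $\prox^p_{\frac1p d(\cdot,y)^p,\lambda}(x)\subset\prox^p_{\frac1p d(\cdot,\Omega)^p,\lambda}(x)$. It then identifies the two-point prox as a weighted barycenter of $x$ and $y$, and uses Kuwae's existence, uniqueness and geodesic representation of two-point barycenters to place it at $\frac{1}{1+\lambda}x\oplus\frac{\lambda}{1+\lambda}y$. The reverse inclusion is only gestured at, via a remark about $y\notin P_\Omega(x)$ and ``$\lambda$ large enough''. That remark does not show that an arbitrary minimizer at the given $\lambda$ has the stated form.

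You instead use the triangle inequality (in effect the $1$-Lipschitz property of $d(\cdot,\Omega)$) to get the lower bound $d(x,\Omega)^p/(p(1+\lambda)^{p-1})$, and you solve a strictly convex scalar problem. You then use equality in the triangle inequality together with unique geodesicness to pin down every minimizer; your concatenation argument for that step is sound. This buys a self-contained proof that needs neither $p$-uniform convexity nor Kuwae's barycenter results. Your Step 1 is exactly the lower bound the paper's first inclusion silently relies on, and your Step 3 is a genuine proof of ``$\subseteq$''. What the paper's route buys is the link to weighted barycenters, which is the viewpoint exploited later in the Fr\'echet mean application.

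Your explicit assumption that projections are nonempty is also the right one. The paper's claim that $P_\Omega(x)\neq\emptyset$ ``because $\Omega$ is closed'' needs something like bounded compactness. In fact you only need nonemptiness at $x$ and at minimizers $z$, since Step 1 can use the Lipschitz bound instead of a projection point.

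One small caveat concerns your closing remark, not the proof. Distinct points of $P_\Omega(x)$ can produce the same prox point when geodesics issuing from $x$ branch, for instance in a metric tree, which is a CAT(0) space. So the prox can be single-valued even when $P_\Omega(x)$ is not, and ``exactly inherited'' should be read loosely.
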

\begin{proof}
 Take any $y\in P_\Omega(x)$, which is nonempty because $\Omega$ is closed.  Then
 \[
  \prox^p_{\tfrac{1}{p}d(\cdot, y)^p, \lambda}(x)\subset \prox^p_{\tfrac{1}{p}d(\cdot, \Omega)^p, \lambda}(x).
 \]
Note that $\prox^p_{\tfrac{1}{p}d(\cdot, y)^p, \lambda}(x)$ is the barycenter of the pair of points
$x$ and $y$ with weight $\omegahat=\lambda^{p-1}/(1+\lambda^{p-1})$.  Indeed, letting $\omega=\lambda/(1+\lambda)$,
 \begin{equation*}
 \begin{aligned}
  \prox^p_{\tfrac{1}{p}d(\cdot, y)^p, \lambda}(x)&\equiv\argmin_z\klam{\tfrac{1}{p}d(z, y)^p+\tfrac{1}{p\lambda^{p-1}}d(z,x)^p}\\
  &=\argmin_z\klam{\omega^{p-1}d(z, y)^p+(1-\omega)^{p-1}d(z,x)^p}\\
  &=\argmin_z\klam{\omegahat d(z, y)^p+(1-\omegahat)d(z,x)^p},
 \end{aligned}
 \end{equation*}
 where $\omegahat = \omega^{p-1}/((1-\omega)^{p-1} + \omega^{p-1})$.
The barycenter exists and is unique \cite[Proposition 3.3]{Kuwae2014}, and  by
\cite[Proposition 3.6]{Kuwae2014} has the representation
$z_*(y) = (1-\tauhat_*)x\oplus \tauhat_* y$ where $\tauhat_*$ is
\[
\tauhat_* =  \frac{\omegahat^{\tfrac{1}{p-1}}}{(1-\omegahat)^{\tfrac{1}{p-1}} + \omegahat^{\tfrac{1}{p-1}}} = \omega = \tfrac{\lambda}{1+\lambda}.
\]

The remaining claim of the theorem is that $\prox^p_{\tfrac{1}{p}d(\cdot, \Omega)^p, \lambda}(x)$ is the union over
points $z_*(y)$ derived in this way from $y\in P_\Omega(x)$.  To see this choose any point $y\notin P_\Omega(x)$
and observe that for all $\lambda>0$ large enough,
 \[
  \prox^p_{\tfrac{1}{p}d(\cdot, y)^p, \lambda}(x)\notin \prox^p_{\tfrac{1}{p}d(\cdot, \Omega)^p, \lambda}(x).
 \]
\end{proof}

It is an easy exercise to show that for $f = d(\cdot, y)^p$ the prox mapping
$\prox_{f,\lambda}^p(x)$ is $\alpha$-fne on $G$ for all prox-parameters
$\lambda\geq \paren{(2-c)/(pc)}^{1/(p-1)}$.  More generally, we have the following result.
\begin{proposition}[prox mappings of strongly convex functions are $\alpha$-fne]\label{prop:prox-alpha-fne}
Let $(G, d)$ be a boundedly compact, $p$-uniformly convex metric space with $p \in (1, \infty)$ and $c > 0$,
and let $f : G \to \mathbb{R}$ be proper, lsc, and strongly convex with parameter $\mu \geq (2 - c)/(p \lambda^{p-1})$.

  \begin{enumerate}[(i)]
\item \label{prop: strogly cvx i} Let $x_+ = \prox_{f, \lambda}^p(x)$. Then
\begin{equation} \label{prop: i}
  \frac{4}{cp\lambda^{p-1}} \Delta^{(p,c)}(x_+, x, x_+, y) \leq f(y) - f(x_+) \quad \forall y \in G.
\end{equation}

\item \label{prop: strogly cvx ii} The prox mapping is $\alpha$-fne on $G$, that is, for $x_+ = \prox_{f, \lambda}^p(x)$ and $y_+ = \prox_{f, \lambda}^p(y)$,
$$
d(x_+, y_+)^p \leq d(x, y)^p - \frac{1 - \alpha}{\alpha} \psi^{(p,c)}(x, y, x_+, y_+) \quad \forall x, y \in G,
$$
with constant $\alpha = c/(2 + c)$.
\end{enumerate}

\end{proposition}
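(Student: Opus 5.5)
The plan is to follow the proofs of Lemma \ref{l:prox_char} and Theorem \ref{t:prox props gen}, using strong convexity to absorb the defect term $\tfrac{2-c}{2p\lambda^{p-1}}d(x_+,y)^p$.

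For part \eqref{prop: strogly cvx i}, start as in Lemma \ref{l:prox_char}: for $y\in G$ and $z_\tau=(1-\tau)y\oplus\tau x_+$, the prox minimality gives
\[
f(x_+)+\tfrac{1}{p\lambda^{p-1}}d(x,x_+)^p\le f(z_\tau)+\tfrac{1}{p\lambda^{p-1}}d(z_\tau,x)^p .
\]
Bound $f(z_\tau)$ by strong convexity, which yields the extra term $-\tfrac{\mu}{2}\tau(1-\tau)d(x_+,y)^p$. Bound $d(z_\tau,x)^p$ by $p$-uniform convexity, as before. After gathering terms, dividing by $(1-\tau)$ and letting $\tau\nearrow1$, one should get
\[
\tfrac{1}{p\lambda^{p-1}}\paren{\tfrac c2 d(x_+,y)^p+d(x_+,x)^p-d(y,x)^p}+\tfrac{\mu}{2}d(x_+,y)^p\le f(y)-f(x_+).
\]
Rewrite the left side via $\Delta^{(p,c)}(x_+,x,x_+,y)=\tfrac c4\paren{d(x_+,y)^p+d(x,x_+)^p-d(x,y)^p}$, using $\tfrac c2=1-\tfrac{2-c}{2}$. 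This gives $\tfrac{4}{cp\lambda^{p-1}}\Delta^{(p,c)}-\tfrac{2-c}{2p\lambda^{p-1}}d(x_+,y)^p+\tfrac\mu2 d(x_+,y)^p$. The hypothesis $\mu\ge(2-c)/(p\lambda^{p-1})$ makes the last two terms together nonnegative (with room to spare, since only $\mu/2\ge(2-c)/(2p\lambda^{p-1})$ is needed). This gives \eqref{prop: i}.

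For part \eqref{prop: strogly cvx ii}, apply \eqref{prop: i} at $y=y_+$, and again with the roles of $x$ and $y$ exchanged. Adding the two inequalities cancels the $f$ terms. As in Theorem \ref{t:prox props gen}\eqref{t:prox props gen 1}, the sum of the $\Delta$ terms is $\tfrac c4$ times
\[
2d(x_+,y_+)^p+d(x,x_+)^p+d(y,y_+)^p-d(x,y_+)^p-d(y,x_+)^p ,
\]
which now has no defect term. So we get $\Delta^{(p,c)}(x,y,x_+,y_+)\ge\tfrac c2 d(x_+,y_+)^p$, improving \eqref{metrically monotone}. Then substitute \eqref{eq:delta_to_psi}, $2\Delta=\tfrac c2(d(x,y)^p+d(x_+,y_+)^p)-\psi$:
\[
c\,d(x_+,y_+)^p\le\tfrac c2 d(x,y)^p+\tfrac c2 d(x_+,y_+)^p-\psi^{(p,c)},
\]
that is, $d(x_+,y_+)^p\le d(x,y)^p-\tfrac2c\psi^{(p,c)}(x,y,x_+,y_+)$. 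Setting $\tfrac{1-\alpha}{\alpha}=\tfrac2c$ gives $\alpha=c/(2+c)$ and zero violation, as claimed.

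The main obstacle is the bookkeeping in the limit $\tau\nearrow1$. The strong-convexity term has the form $\tau(1-\tau)$, so it must be divided by $(1-\tau)$ consistently with the others so that it survives with coefficient $\mu/2$. Otherwise the argument is a direct repetition of the earlier computations.
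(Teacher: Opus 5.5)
Your proposal is correct and follows the paper's proof essentially step for step: the same $\tau\nearrow 1$ limit with the extra $\tfrac{\mu}{2}d(x_+,y)^p$ term for (i), and the same summation of (i) at $y_+$ and, with roles exchanged, at $x_+$ for (ii), where you pass through $\Delta^{(p,c)}(x,y,x_+,y_+)\ge\tfrac c2 d(x_+,y_+)^p$ and \eqref{eq:delta_to_psi} instead of rewriting directly in terms of $\psi^{(p,c)}$. One small correction: there is no ``room to spare'' in the hypothesis, since $\mu/2\ge(2-c)/(2p\lambda^{p-1})$ is exactly equivalent to $\mu\ge(2-c)/(p\lambda^{p-1})$.
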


\begin{proof}

    For \eqref{prop: strogly cvx i}, the process of the proof is almost identical to Lemma \ref{l:prox_char}. If $x_+ = \prox_{f, \lambda}(x)$,
    $$
    f(x_+) + \tfrac{1}{p \lambda^{p-1}} d(x, x_+)^p \leq f(z_\tau) + \tfrac{1}{p \lambda^{p-1}} d(z_\tau, x)^p,
    $$
  for $z_\tau \equiv (1-\tau)y\oplus \tau x_+$. Then by rearranging the inequality and using strong convexity of $f$
  together with uniform convexity of the space yields
$$
  \begin{aligned}
    f(x_+) &\leq f(z_\tau) + \tfrac{1}{p \lambda^{p-1}} d(x, z_\tau)^p - \tfrac{1}{p \lambda^{p-1}} d(x, x_+)^p \\
    &\leq (1 - \tau)f(y) + \tau f(x_+) - \tfrac{\mu \tau(1-\tau)}{2} d(y, x_+)^p +\tfrac{1}{p \lambda^{p-1}} d(z_\tau, x)^p - \tfrac{1}{p \lambda^{p-1}} d(x, x_+)^p \\
    &\leq (1 - \tau)f(y) + \tau f(x_+) - \tfrac{\mu \tau(1-\tau)}{2} d(y, x_+)^p + \tfrac{1-\tau}{p \lambda^{p-1}} d(y, x)^p + \tfrac{\tau}{p \lambda^{p-1}} d(x, x_+)^p \\
    &\quad - \tfrac{c\tau(1 - \tau)}{2p\lambda^{p-1}} d(x_+, y)^p - \tfrac{1}{p\lambda^{p-1}} d(x, x_+)^p.
  \end{aligned}
 $$
 Dividing both sides by $(1-\tau)$ and taking the limit $\tau \nearrow 1$ gives
 $$
 \tfrac{\mu}{2}d(x_+,y)^p + \frac{1}{p\lambda^{p-1}} \left( \tfrac{c}{2}d(x_+, y)^p + d(x_+, x)^p - d(y, x)^p \right) \leq f(y) - f(x_+).
 $$
 As in Lemma \ref{l:prox_char}, using $c/2 = 1 - (2 - c)/2$ and the definition of $\Delta^{(p,c)}$ from \eqref{eq:delta}, yields

 $$
 \paren{\frac{\mu}{2}-\frac{2-c}{2p\lambda^{p-1}}}d(x_+,y)^p +\frac{4}{cp\lambda^{p-1}} \Delta^{(p,c)}(x_+, x, x_+, y) \leq f(y) - f(x_+) \quad \forall y \in G.
$$
By the assumption $\mu \ge (2-c)/ (p\lambda^{p-1})$, the coefficient multiplying $d(x_+,y)^p$ is nonegative and the claim is proved.

For \eqref{prop: strogly cvx ii}, applying \eqref{prop: i} to $x_+ = \prox^p_{f,\lambda}(x)$ on $y_+$ and
similarly to $y_+ = \prox^p_{f,\lambda}(y)$ and adding them up, yields
  $$
   \Delta^{(p,c)}(x_+, x, x_+, y) + \Delta^{(p,c)}(y_+, y, y_+, x) \leq 0.
  $$
 This can be written
  \begin{align*}
  &d(x_+, y_+)^p \le \\
  &\quad d(x, y)^p - \paren{d(x, x_+)^p + d(y, y_+)^p + d(x_+, y_+)^p + d(x, y)^p - d(y, x_+)^p - d(x, y_+)^p}.
  \end{align*}
  With definition of $\psi^{(p,c)}(x,y,x_+,y_+)$ in \eqref{eq:psi} and $\alpha = c/(2+c)$, the proof is complete.
  \end{proof}

\subsection{Prox Mappings: Spaces with Nonpositive Curvature}\label{s:prox_cat0}

Throughout this subsection we specialize the results above to the case where $(G,d)$ is a $p$-uniformly
convex metric spaces with constants $p=c=2$, i.e. a  CAT($0$) space.

\begin{proposition}[prox mappings of convex functions: nonpositive curvature]\label{t:prox_cat0}
  Let $(G,d)$ be a boundedly compact, $p$-uniformly convex metric space with constants $p=c=2$,
  and let $f\colon G \to \Rbb$ be proper, convex and lsc. Then $\prox^2_{f,\lambda}$
  is everywhere $\alpha$-fne with $\alpha = 1/2$ and everywhere nonexpansive.
  Moreover, $\argmin f = \Fix \prox^2_{f,\lambda}$.
\end{proposition}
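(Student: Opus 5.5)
We would specialize Theorem \ref{t:prox props gen} to $p=c=2$. With $c=2$, part \eqref{t:prox props gen 1} gives
\[
\Delta^{(2,2)}(x,y,x_+,y_+)\ge d(x_+,y_+)^2 .
\]
Equivalently, from the proof of that part,
\[
2d(x_+,y_+)^2\le d(x,y_+)^2+d(y,x_+)^2-d(x,x_+)^2-d(y,y_+)^2 .
\]
Adding $d(x,y)^2+d(x_+,y_+)^2$ to both sides and rearranging, this reads
\[
d(x_+,y_+)^2\le d(x,y)^2-\bigl(d(x,x_+)^2+d(y,y_+)^2+d(x_+,y_+)^2+d(x,y)^2-d(y,x_+)^2-d(x,y_+)^2\bigr).
\]
The bracket is exactly $\psi^{(2,2)}(x,y,x_+,y_+)$, since $c/2=1$. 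Hence
\[
d(x_+,y_+)^2\le d(x,y)^2-\psi^{(2,2)}(x,y,x_+,y_+),
\]
which is the $\alpha$-fne inequality with $\frac{1-\alpha}{\alpha}=1$, i.e. $\alpha=1/2$ and zero violation. As a check, \eqref{e:alpha-epsilon_c 0} at $c=2$ gives $\alpha_c=2/4=1/2$ and $\epsilon_c=0$. Alternatively one can use the identity \eqref{eq:delta_to_psi} directly, as in the proof of Theorem \ref{t:prox props gen}\eqref{t:prox props gen 2}. This holds for all $x,y\in G$, so the prox is everywhere $\alpha$-fne. Nonexpansiveness follows because $\psi^{(2,2)}\ge 0$ by Lemma \ref{t:metric space properties}, so dropping that term gives $d(x_+,y_+)\le d(x,y)$. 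Proposition \ref{t:properties pafne}\eqref{t:properties pafne iv} gives the same conclusion.

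For the fixed point set, the inclusion $\argmin f\subset\Fix\prox^2_{f,\lambda}$ is Theorem \ref{t:prox props gen}\eqref{t:prox props gen 0}. For the reverse inclusion, let $\xbar=\prox^2_{f,\lambda}(\xbar)$. We apply Lemma \ref{l:prox_char} with $x=x_+=\xbar$, $p=c=2$, so the term involving $(2-c)$ vanishes. This gives
\[
\tfrac{1}{\lambda}\Delta^{(2,2)}(\xbar,\xbar,\xbar,y)\le f(y)-f(\xbar)\quad\forall y\in G,
\]
where $\Delta^{(2,2)}(\xbar,\xbar,\xbar,y)=\tfrac12\bigl(d(\xbar,y)^2+d(\xbar,\xbar)^2-d(\xbar,\xbar)^2-d(\xbar,y)^2\bigr)=0$. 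So $f(\xbar)\le f(y)$ for all $y$, i.e. $\xbar\in\argmin f$.

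The main obstacle is bookkeeping: one must verify that the inequality in Lemma \ref{l:prox_char} at a fixed point yields exactly $0\le f(y)-f(\xbar)$. If the sign conventions in $\Delta$ were off, I would instead argue directly from the limiting inequality in the proof of Lemma \ref{l:prox_char}:
\[
\tfrac{1}{2\lambda}\bigl(d(x_+,y)^2+d(x_+,x)^2-d(y,x)^2\bigr)\le f(y)-f(x_+).
\]
Setting $x=x_+=\xbar$, the left side is $\tfrac{1}{2\lambda}\bigl(d(\xbar,y)^2-d(\xbar,y)^2\bigr)=0$, and the conclusion is immediate.
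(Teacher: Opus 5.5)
Your proposal is correct and follows the paper's proof essentially verbatim. You specialize Theorem~\ref{t:prox props gen} to $p=c=2$ to get $\alpha=1/2$ with zero violation, use $\psi^{(2,2)}\ge 0$ from Lemma~\ref{t:metric space properties} for global nonexpansiveness, and evaluate Lemma~\ref{l:prox_char} at a fixed point, where both the $(2-c)$-term and $\Delta^{(2,2)}$ vanish, for the reverse inclusion.

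One caveat concerns your aside that Proposition~\ref{t:properties pafne}\eqref{t:properties pafne iv} ``gives the same conclusion.'' That is only true at fixed points, where it yields quasi-nonexpansiveness. The ``everywhere nonexpansive'' claim therefore rests on the global nonnegativity of $\psi^{(2,2)}$, exactly as in your main argument.
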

\begin{proof}
Substituting $p=c=2$ in \eqref{e:alpha-epsilon_c 0} gives $\alpha = 1/2$ and $\epsilon = 0$, which proves
$\prox^2_{f,\lambda}$ is $\alpha$-fne with $\alpha=1/2$ (no violation). That $\prox^2_{f,\lambda}$
is nonexpansive follows from \eqref{e:metric CS psi} of Lemma \ref{t:metric space properties}, which yields
$\psi^{(2,2)}(x,y, x_+, y_+)\ge 0$ for all $x, y, x_+, y_+$.
To see that $\argmin f= \Fix \prox^2_{f,\lambda}$, the inequality \eqref{eq:prox_char} of Lemma \ref{l:prox_char}
for $c=2$ and $x_+\in \Fix \prox^2_{f,\lambda}$ yields $0\le f(y)-f(x_+)$ for all $y\in G$, hence $x_+\in\argmin f$.
Together with Theorem \ref{t:prox props gen}\eqref{t:prox props gen 0}, the proof is complete.
\end{proof}
The next corollary is an immediate specialization to indicator functions of convex sets.
\begin{corollary}[projectors in Hadamard spaces]\label{t:proj_hadamard}
Let $(G,d)$ be a Hadamard space and let $C\subset G$ be nonempty, closed and convex.
For any $x\in G$, the projector $P_C(x)\equiv  \argmin_{z\in C} d(x,z)$ is nonempty, a singleton, nonexpansive
and $\alpha$-fne with constant $\alpha=1/2$.
\end{corollary}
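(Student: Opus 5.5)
The plan is to obtain Corollary \ref{t:proj_hadamard} directly from Proposition \ref{t:prox_cat0}, by writing the projector as the prox mapping of the indicator function $\iota_C$ of $C$. A Hadamard space is a complete CAT($0$) space, so by Lemma \ref{t:CATkappa-pucvx} (with $\kappa=0$, where $c=2$) it is $p$-uniformly convex with $p=c=2$. Since $C$ is nonempty, closed and convex, $\iota_C$ is proper, lsc and geodesically convex. For any $\lambda>0$,
\[
\prox^2_{\iota_C,\lambda}(x)=\argmin_{z\in G}\klam{\iota_C(z)+\tfrac{1}{2\lambda}d(z,x)^2}=\argmin_{z\in C} d(z,x)=P_C(x),
\]
because $t\mapsto t^2/(2\lambda)$ is strictly increasing on $[0,\infty)$.

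The main obstacle is that Proposition \ref{t:prox_cat0} is stated for real-valued $f$ on a \emph{boundedly compact} space, while here $\iota_C$ is extended-valued and $G$ is only complete. I will therefore not cite the proposition verbatim. Instead I will repeat its two ingredients directly for $P_C$.

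\textbf{Existence and uniqueness.} Let $(z_n)\subset C$ be a minimizing sequence for $d(\cdot,x)$ over $C$. Apply the $2$-uniform convexity inequality with $c=2$ and $\tau=1/2$ to $z_n,z_k$. Convexity of $C$ places the midpoint in $C$, so its distance to $x$ is at least $r\equiv\inf_{z\in C}d(z,x)$. This gives
\[
\tfrac14 d(z_n,z_k)^2\le \tfrac12 d(x,z_n)^2+\tfrac12 d(x,z_k)^2-r^2\to 0.
\]
Hence $(z_n)$ is Cauchy, it converges in $C$ by completeness and closedness, and the limit is a minimizer. The same inequality applied to two minimizers forces them to coincide, so $P_C(x)$ is a singleton. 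This is the standard Hadamard-space argument and replaces the citation of \cite{Izuchukwu19}.

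\textbf{Firm nonexpansiveness.} Next I verify the characterization of Lemma \ref{l:prox_char} for $f=\iota_C$ and $c=2$. Its proof only uses the minimizing property, convexity of $f$, and the uniform convexity inequality along the geodesic from $y$ to $x_+$, all of which stay inside $C$ when $y\in C$; compactness is never used. So for $x_+=P_C(x)$ and every $y\in C$ we get $\Delta^{(2,2)}(x_+,x,x_+,y)\le 0$. Taking $y=y_+=P_C(y)$, and symmetrically with the roles of $x$ and $y$ swapped, and adding, gives \eqref{metrically monotone} with $c=2$, namely $\Delta^{(2,2)}(x,y,x_+,y_+)\ge d(x_+,y_+)^2$. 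This is exactly the computation in Theorem \ref{t:prox props gen}\eqref{t:prox props gen 1}. Then \eqref{eq:delta_to_psi} turns it into
\[
d(x_+,y_+)^2\le d(x,y)^2-\psi^{(2,2)}(x,y,x_+,y_+),
\]
which is the $\alpha$-fne inequality with $\alpha=1/2$ and no violation, consistent with \eqref{e:alpha-epsilon_c 0} at $c=2$.

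\textbf{Nonexpansiveness.} Finally, Lemma \ref{t:metric space properties} gives $\psi^{(2,2)}\ge 0$, so the last inequality yields $d(P_Cx,P_Cy)\le d(x,y)$.
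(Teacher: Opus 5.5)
Your proof is correct and follows the paper's route. The paper obtains this corollary as an immediate specialization of Proposition \ref{t:prox_cat0} to the indicator function of $C$, i.e.\ the chain Lemma \ref{l:prox_char} $\to$ Theorem \ref{t:prox props gen}\eqref{t:prox props gen 1}--\eqref{t:prox props gen 2} at $p=c=2$ $\to$ $\psi^{(2,2)}\ge 0$, which is exactly what you unpack. Your extra care is well placed: Proposition \ref{t:prox_cat0} is stated for real-valued $f$ on boundedly compact spaces, and your direct CN-inequality existence/uniqueness argument, together with rerunning the prox characterization only for test points in $C$, closes the small hypothesis gap that the paper's one-line specialization glosses over.
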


\begin{corollary}[convex relaxations of prox mappings of convex functions]\label{t:cvx comb prox flat}
In the setting of Theorem \ref{t:prox_cat0}, whenever $\tau\in(0,1/2]$,
the relaxed prox mapping  $T_\tau\equiv (1-\tau)\Id\oplus\tau\prox^2_{f,\lambda}$ is $\alpha$-fne
with constant $\alpha_\tau = 1-\tau$.
\end{corollary}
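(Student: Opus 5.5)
The plan is to obtain the corollary by combining Proposition \ref{t:prox_cat0} with Theorem \ref{t:cvx comb flat}\eqref{t:cvx relaxation flat suff ane}. First, by Proposition \ref{t:prox_cat0}, $T\equiv\prox^2_{f,\lambda}$ is single-valued and nonexpansive everywhere on $G$. In the language of Definition \ref{d:pafne} this means $T$ is pointwise almost nonexpansive at every $y\in G$ on $G$ with violation $\epsilon=0$.

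Second, I fix an arbitrary $y\in G$ and apply Theorem \ref{t:cvx comb flat}\eqref{t:cvx relaxation flat suff ane} with $T_2=\Id$, $T_1=T$ and $D=G$. For $\tau\in(0,1/2]$ this gives that $T_\tau=(1-\tau)\Id\oplus\tau T$ is pointwise a$\alpha$-fne at $y$ with constant $1-\tau$ and violation $\epsilon_\tau=\epsilon\tau=0$. The point $y$ was arbitrary and the constant $1-\tau$ does not depend on it, so $T_\tau$ is $\alpha$-fne on $G$ with $\alpha_\tau=1-\tau$ and no violation.

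The step that needs the most care is justifying the key inequality \eqref{eq:warm gun 0},
\[
\psi^{(2,2)}(x,y,x_\tau,T_\tau(y))\le\tau^2\psi^{(2,2)}(x,y,x_+,T(y)),
\]
which the proof of Theorem \ref{t:cvx comb flat} takes from ``expanding'' $x_\tau$ and $T_\tau(y)$. I would check it directly in CAT(0):
\begin{enumerate}[(i)]
\item Write $\psi^{(2,2)}(x,y,x_\tau,y_\tau)=d(x,x_\tau)^2+d(y,y_\tau)^2+d(x_\tau,y_\tau)^2+d(x,y)^2-d(y,x_\tau)^2-d(x,y_\tau)^2$.
\item Use the exact identities $d(x,x_\tau)=\tau d(x,x_+)$ and $d(y,y_\tau)=\tau d(y,y_+)$.
\item Bound $d(y,x_\tau)^2$ and $d(x,y_\tau)^2$ from below via the CAT(0) comparison (the reverse inequality to $2$-uniform convexity, which holds with equality in the model plane).
\item Bound $d(x_\tau,y_\tau)^2$ from above via \eqref{e:Big Thief}.
\end{enumerate}
In the Euclidean comparison this reduces to $\|(1-\tau)(x-y)\|^2$-type algebra. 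If the direct comparison turns out delicate, I would instead fall back on Theorem \ref{t:cvx comb flat}\eqref{t:cvx relaxation flat suff}, using the $1/2$-fne property of $T$ and nonnegativity of $\psi^{(2,2)}$ (Lemma \ref{t:metric space properties}) to verify \eqref{eq:Smile3} with $\alpha_\tau=1-\tau$.

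Single-valuedness of $T_\tau$ is immediate, since $T$ is single-valued and geodesics in a CAT(0) space are unique.
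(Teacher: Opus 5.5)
Your top-level argument is exactly the paper's proof: Proposition~\ref{t:prox_cat0} gives that $\prox^2_{f,\lambda}$ is nonexpansive, and Theorem~\ref{t:cvx comb flat}\eqref{t:cvx relaxation flat suff ane} with $\epsilon=0$ then gives $\alpha_\tau=1-\tau$. The problem is the verification you add for \eqref{eq:warm gun 0}, which you rightly flag as the delicate step. It cannot be carried out. Step (iii) uses the comparison in the wrong direction. A CAT(0) space only gives the upper bound $d(y,x_\tau)^2\le(1-\tau)d(y,x)^2+\tau d(y,x_+)^2-\tau(1-\tau)d(x,x_+)^2$; the reverse inequality is a curvature-bounded-below condition. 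Since $d(y,x_\tau)^2$ and $d(x,y_\tau)^2$ enter $\psi^{(2,2)}(x,y,x_\tau,y_\tau)$ with a minus sign, you get no upper bound on it.

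In fact \eqref{eq:warm gun 0} is false in Hadamard spaces, even for genuine prox mappings. Take the tripod: three rays glued at a point $o$, with $(i,r)$ the point at distance $r$ from $o$ on ray $i$. Let $x=(1,1)$, $y=(3,1)$, $p=(2,1)$, $f=3\,d(\cdot,p)$ and $\lambda=1$. Then $x_+=y_+=p$, so $\psi^{(2,2)}(x,y,x_+,y_+)=d(x,y)^2=4$. For $\tau\in(0,1/2]$ you get $x_\tau=(1,1-2\tau)$ and $y_\tau=(3,1-2\tau)$. The relevant distances are $d(x,x_\tau)=d(y,y_\tau)=2\tau$, $d(x_\tau,y_\tau)=2-4\tau$ and $d(x,y_\tau)=d(y,x_\tau)=2-2\tau$. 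Hence $\psi^{(2,2)}(x,y,x_\tau,y_\tau)=16\tau^2>4\tau^2$.

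Your fallback fails too. Nonnegativity of $\psi^{(2,2)}$ bounds it only from below, and in this example \eqref{eq:Smile3} with $\alpha=1/2$, $\alpha_\tau=1-\tau$ reads $16\tau^3/(1-\tau)\le 8\tau-4\tau^2$, which is false at $\tau=1/2$ ($4>3$). The corollary's conclusion does hold in this example, with equality at $\tau=1/2$, so what breaks is the route, not necessarily the statement. Your proposal is therefore only as strong as the citation of Theorem~\ref{t:cvx comb flat}\eqref{t:cvx relaxation flat suff ane}, and your extra paragraph does not make that step rigorous. A self-contained proof would have to bound $d(x_\tau,y_\tau)^2$ against $\psi^{(2,2)}(x,y,x_\tau,y_\tau)$ directly, not through \eqref{eq:warm gun 0} or \eqref{eq:Smile3}.
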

\begin{proof}
 The proof follows from Proposition \ref{t:prox_cat0} and
 Theorem \ref{t:cvx comb flat}\eqref{t:cvx relaxation flat suff ane}.
\end{proof}

\subsection{Proof of the Main Result}

\begin{proof}[Proof of Theorem \ref{t:msr convergence}]
  Part \eqref{thm: cyclic proximal point i}. By Theorem \ref{t:prox props gen}\eqref{t:prox props gen 2}, each
  $T_j:=\prox_{f_j,\lambda_j}^p$ is pointwise a$\alpha$-fne on $G$ with
  $ \alpha_c=c(c-1)/(c(c-1)+2),$ and
  $\epsilon_c=(2-c)(c-1)$,
  which is well-defined for $c\in(3/2,2]$.\\
  Part \eqref{thm: cyclic proximal point ii}.
 Since each
  $T_j:=\prox_{f_j,\lambda_j}^p$ is pointwise a$\alpha$-fne on $G$ with
  $\alpha_c$ and $\epsilon_c$, using Assumption \ref{ass:regularity averaged} and Proposition \ref{t:cvx relaxation suff} yields each $T_{\tau,j}$ is pointwise a$\alpha$-fne at $y_{j-1}$ with $\alpha_{\tau,j}$ and $\epsilon_{\tau,j} = \tau^2\epsilon_c$.\\
  Part \eqref{thm: cyclic proximal point iii}.
  For the first part,
  Assumption \ref{ass:regularity abstr}\eqref{ass:regularity 0} guarantees $\Tbar(D)\subseteq D$ and
  $S:=\Fix\Tbar\cap D\neq\emptyset$.
  By Assumption \ref{ass:regularity gen metric} and Corollary \ref{c:compositionthm},
  $\Tbar$ is pointwise a$\alpha$-fne at every $y\in \Fix\Tbar\cap D$ with a common constant
  $\alphabar\in(0,1)$ and violation
  $
  \epsilonbar \;=\; \prod_{j=1}^m(1+\epsilon_c)-1.
  $
  The same argument applies to the second part, with $\epsilon_{\tau,j}$ replacing $\epsilon_c$ in the violation term.\\

  Parts \eqref{thm: cyclic proximal point iv} and \eqref{thm: cyclic proximal point linear}.
  Assumption \ref{ass:regularity abstr}\eqref{ass:regularity 2} supplies the stability
  $d(x,S)\le \rho\, d(x,\Tbar x)$ with $\rho$ satisfying \eqref{eq:theta linear}, with $\alphabar,\epsilonbar$.
  Therefore all assumptions of Proposition \ref{t:msr convergence abstr} hold for $\Tbar$ and the result follows from this for the general
  case, and the specialization of this via \eqref{eq:theta linear} to the case of linear gauges.
  To see the claimed asymptotic rate of convergence in the linear case, note that the iterates are confined
  to vanishingly small neighborhoods of the fixed point $\xbar$. By Lemma \ref{t:CATkappa-pucvx}
  the constant $c_\delta$ on these
  vanishingly small neighborhoods $\Ball_\delta(\xbar)$ converges to $2$ from below.  The violation
  of the $\alpha$-fne property on $\Ball_\delta(\xbar)$ given by \eqref{e:alpha-epsilon_c} therefore converges to
  zero as $\delta\searrow 0$.  The asymptotic rate
  is therefore $\gamma \le \sqrt[p]{1- \frac{1-\alphabar }{\alphabar \rho^p}}$.\\
  If we further assume $f_i$'s are strongly convex with $\mu\ge (2-c)/(p\tau_j^{p-1})$, by
  Proposition \ref{prop:prox-alpha-fne}, the prox mappings are $\alpha$-fne on $G$.
  This implies $\epsilon_j=0$ for all $j$, so the composite violation satisfies $\epsilonbar  =0$ and the
  asymptotic rate above applies to the entire sequence for any $x_0\in D$.
  \end{proof}

\begin{proof}[Proof of Corollary \ref{t:msr convergence H}]
  Part \eqref{cor: hadamard cyclic proximal point i}.
  Since $(G,d)$ is a Hadamard space, it is a complete CAT$(0)$ space, hence $p$-uniformly convex with $p=c=2$.
  For each $j$, let $ T_j \equiv  \prox^2_{f_j,\lambda_j}$.
  Then Corollary \ref{t:cvx comb prox flat} yields that $T_{\tau,j}$ is pointwise $\alpha$-fne with $\alpha_{\tau,j}=1-\tau_j$.

  Part \eqref{cor: hadamard cyclic proximal point ii}.
  In case \eqref{eq:cyclic_prox}, each factor $\prox^2_{f_j,\lambda_j}$ is pointwise $\alpha$-fne with $\alpha=\frac12$
  and zero violation by Proposition \ref{t:prox_cat0}.
  In case \eqref{eq:cyclic_gradient_descent}, part \eqref{cor: hadamard cyclic proximal point i} shows that each factor $T_{\tau,j}$ is pointwise $\alpha$-fne with $\alpha_{\tau,j}=1-\tau_j$.
  Hence, in either case, all factors in the composition defining $\Tbar$ are pointwise $\alpha$-fne with zero violation.
  Since $\Fix \Tbar \cap D$ is nonempty by Assumption \ref{ass:regularity abstr}\eqref{ass:regularity 0},
  Proposition \ref{t:nonexp cat(0)} implies that
  $\Tbar$ is pointwise $\alpha$-fne at every $y\in \Fix \Tbar \cap D$ for $\alphabar$ satisfying \eqref{e:alphabar comp cat0}
  (existence guaranteed by Lemma \ref{l:compsite condition flat}).

  Parts \eqref{cor: hadamard cyclic proximal point iii gen} and \eqref{cor: hadamard cyclic proximal point iii}.
  By part \eqref{cor: hadamard cyclic proximal point ii}, the mapping $\Tbar$ is pointwise $\alpha$-fne on $D$.
  Together with Assumption \ref{ass:regularity abstr}, all hypotheses of Proposition \ref{t:msr convergence abstr} are satisfied.
  Therefore, every sequence generated by $\Tbar$ converges $R$-linearly to a point in $\Fix \Tbar \cap D$ with rate determined by the gauge $\Gcal$.
  \end{proof}

  \section{Applications}\label{s:numerics}

  The elementary problem we consider is the computation of the Fréchet mean introduced in \eqref{eq:frechet mean}.
  In the context of \eqref{eq:gen prob} each function $f_j$ is just the distance to a point $x_j$, so the
  prox mapping is just $\prox_{f_j, \lambda_j}(x) = 1/(1+\lambda_j)x\oplus 1/(1+\lambda_j)x_j$
  (see Proposition \ref{ex:prox_sqdist_set}).  The mappings
  in \eqref{eq:mapping family} are therefore all the same. We use the same relaxation parameters $\lambda_j$
  for each data point.
  The fixed points of these mappings are {\em not}
  solutions to \eqref{eq:frechet mean}, but they are quantifiable approximations. Indeed, the solution to
  the Fréchet mean problem lies in the convex hull of the data points. By the same reasoning, the
  solution to the Fréchet mean problem also lies in the convex hull of the intermediate points of the mappings in
  \eqref{eq:cyclic_gradient_descent}. Therefore, the distance from fixed points of $\Tbar$ to solutions of
  \eqref{eq:frechet mean} can be bounded above by the diameter of the intermediate points of the mappings in
  \eqref{eq:cyclic_gradient_descent}. This diameter goes to zero as the stepsizes $\tau_j\searrow 0$ for all $j$.
  The diameter is computed as the largest pairwise distance between the intermediate points.

  In a $p$-uniformly convex space, the Fr\'echet mean problem is modeled by choosing
  $
  f_j(x)\equiv \omega_j d(x,x_j)^p,
  $
  for data points $\{x_1,\ldots,x_m\}$ and weights $\omega_j\in(0,1)$ satisfying $\sum_{j=1}^m \omega_j=1$.
  Each $f_j$ is proper, lower semicontinuous, and convex, so its prox mapping is well defined.
  Moreover, Proposition \ref{ex:prox_sqdist_set} provides a closed-form expression
  for the corresponding $\prox^p$ mapping in a $p$-uniformly convex space.
  This allows us to construct the mapping $\Tbar$ in \eqref{eq:cyclic_gradient_descent},
  and hence the fixed point iteration described in Algorithm \ref{algo:cyclic_gd}.

  \begin{algorithm}[H]
    \caption{Cyclic gradient descent}
    \label{algo:cyclic_gd}
    \begin{algorithmic}[1]
        \STATE{{\bf Initialization.} Functions $f_1,\dots,f_m$ on $G$; positive parameters $\{\lambda_i>0\}_{i=1}^m$;
        parameter $\tau\in[0,1]$; initial $x_0\in G$; tolerance $\varepsilon>0$.}
        \STATE{{\bf General step ($k=0,1,2,\dots$):  While $d(x_{k+1},x_k)\le \varepsilon$ do }\\
            $x_{k+1} = T(x_k) \equiv
            (\tau \operatorname{prox}^2_{f_m,\lambda_m} \oplus (1 - \tau)\mathrm{Id})\circ \cdots
            \circ (\tau \operatorname{prox}^2_{f_1,\lambda_1} \oplus (1 - \tau)\mathrm{Id})(x_k)$
        }
    \end{algorithmic}
\end{algorithm}

%

  The theory developed in previous sections guarantees $R$-linear convergence of the iterates generated by
  Algorithm \ref{algo:cyclic_gd} to fixed points of $\Tbar$ under the stated regularity assumptions.
  The numerical results illustrate this behavior on two model spaces: the manifold of symmetric positive definite matrices with
  the affine invariant metric, and the sphere with the usual Riemannian metric.

  \subsection{SPD matrices}
The space of symmetric positive definite matrices with the affine invariant metric is denoted by $(\mathcal{S}_{++}^d, d)$
where
$
\mathcal{S}_{++}^d = \{ A \in \mathbb{R}^{d \times d} \,:\, A^\top = A,\;
x^\top A x > 0 \ \text{for all } x \in \mathbb{R}^d \setminus \{0\} \},
$
and
$
d(A,B) = \Big\| \log\big(A^{-1/2} B A^{-1/2}\big) \Big\|_F,$
where $A,B \in \mathcal{S}_{++}^d$,
for $\|\cdot\|_F$ the Frobenius norm. The metric is a Riemannian metric, and this space is a Hadamard manifold
\cite[Theorem 10.39]{BirHae99}. By Lemma \ref{t:CATkappa-pucvx}
it is also $p$-uniformly convex metric space with parameters $p=c=2$.
Therefore Corollary \ref{t:msr convergence H} applies in this setting.  Moreover, Assumption \ref{ass:regularity abstr}
can be removed entirely since every collection of points in every complete, $p$-uniformly convex metric space possesses
a unique barycenter \cite{Kuwae2014},
and the Fr\'echet function \eqref{eq:frechet mean} satisfies the K\L $~$ property
\cite[Proposition 3.5]{CruzNetoOliveira19}.  The proof that this implies condition \eqref{e:Psi msr} of
Assumption \ref{ass:regularity abstr}\eqref{ass:regularity 2} is left to future work, but suffice it to say that
the key stepping stone is the equivalence between metric regularity of the epigraphical mapping associated with the
Fr\'echet function and the latter satisfying the  K\L $~$ inequality \cite[Corollary 4]{BolDanLeyMaz10} which implies
gauge metric {\em subregularity} of the mapping $\Phi(x)=\set{d(x,x_+)}{x_+\in T(x)}$ for $0$ at $y\in \Fix T$, which is
condition \eqref{e:Psi msr} of Assumption \ref{ass:regularity abstr}\eqref{ass:regularity 2}.

For the numerical experiments, sample points in $\mathcal{S}_{++}^d$ were generated by a
construction using the Cholesky decomposition. We sampled random lower triangular matrices $L$ with positive
diagonal entries and set $A=LL^\top$. This yields symmetric positive definite matrices by
construction.

Under Assumptions \ref{ass:regularity abstr}, Corollary \ref{t:msr convergence H} predicts that
the iterates generated by Algorithm \ref{algo:cyclic_gd} converge globally $R$-linearly to a
fixed point of the cyclic gradient descent mapping for relaxation parameter $\tau\in(0,1/2]$.  For the
Fr\'echet mean objective, the proximal parameter and the relaxation parameter are indistinguishable.
By Proposition \ref{t:prox_cat0} the prox of any convex function in a CAT($0$) space is $\alpha$-fne
with constant $\alpha=1/2$, there is in fact no restriction on the relaxation parameter $\tau$ for which
Corollary \ref{t:msr convergence H} holds.  Figure \ref{fig:applications_convergence}(left) demonstrates this
predicted linear behavior for several values of the relaxation parameter $\tau$.
Smaller values of $\tau$ lead to smaller cycle diameters, and hence to more accurate
approximations of the Fréchet mean. 
For $\tau=0.9$  the maximum pairwise distance between points in the final cycle of Algorithm \ref{algo:cyclic_gd}
was $3.34763$;  for $\tau=0.005$  the maximum pairwise distance between points in the final cycle
was $0.0197798$.  The ratio of $\tau$ to the cycle diameter was roughly constant over all the different values of $\tau$
(around $0.25$) indicating a linear dependence of the cycle diameter with the relaxation parameter.

\begin{figure}[t]
  \centering
    \centering
    \includegraphics[width=0.48\linewidth]{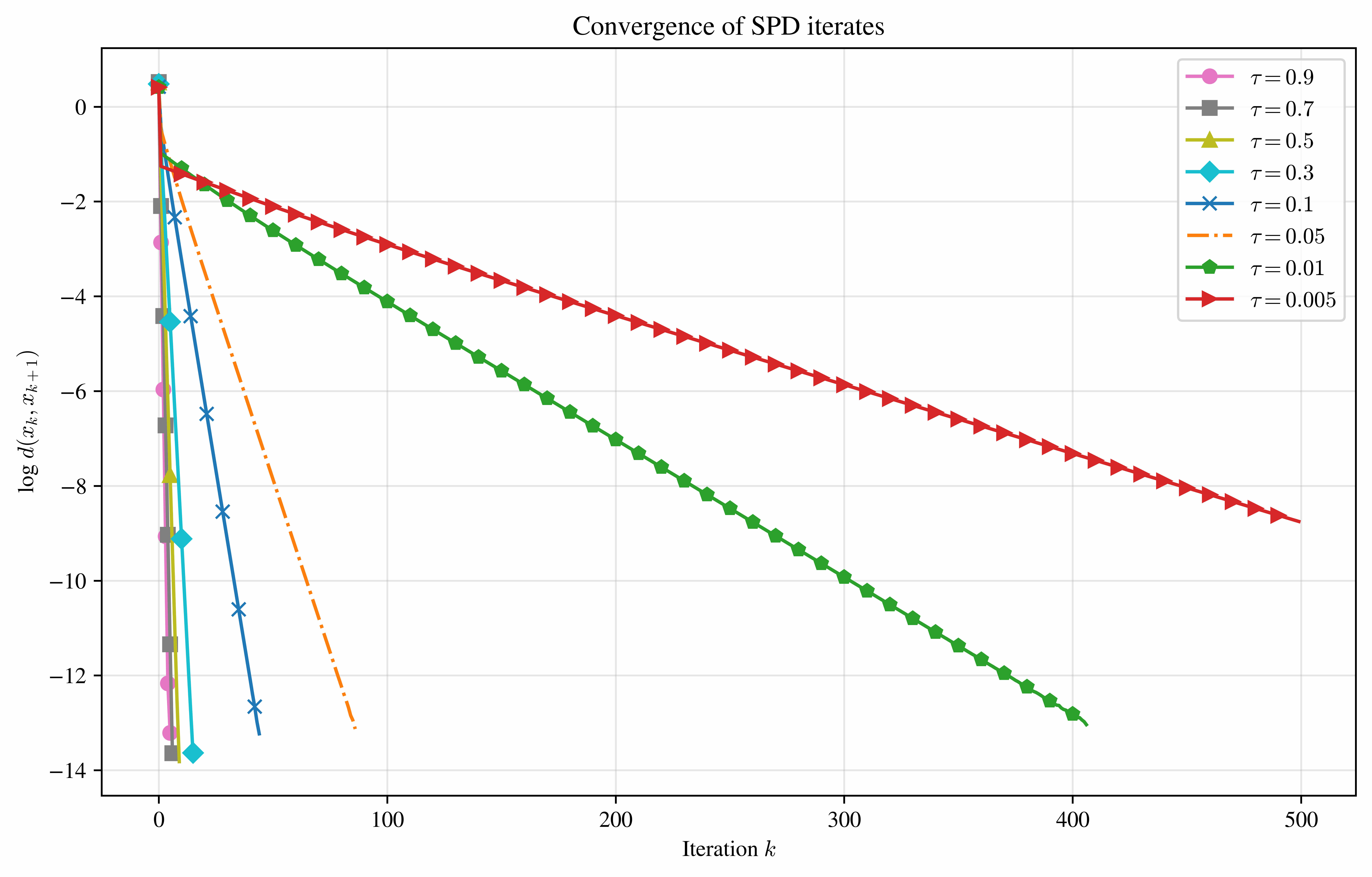}
\hfill    \includegraphics[width=0.48\linewidth]{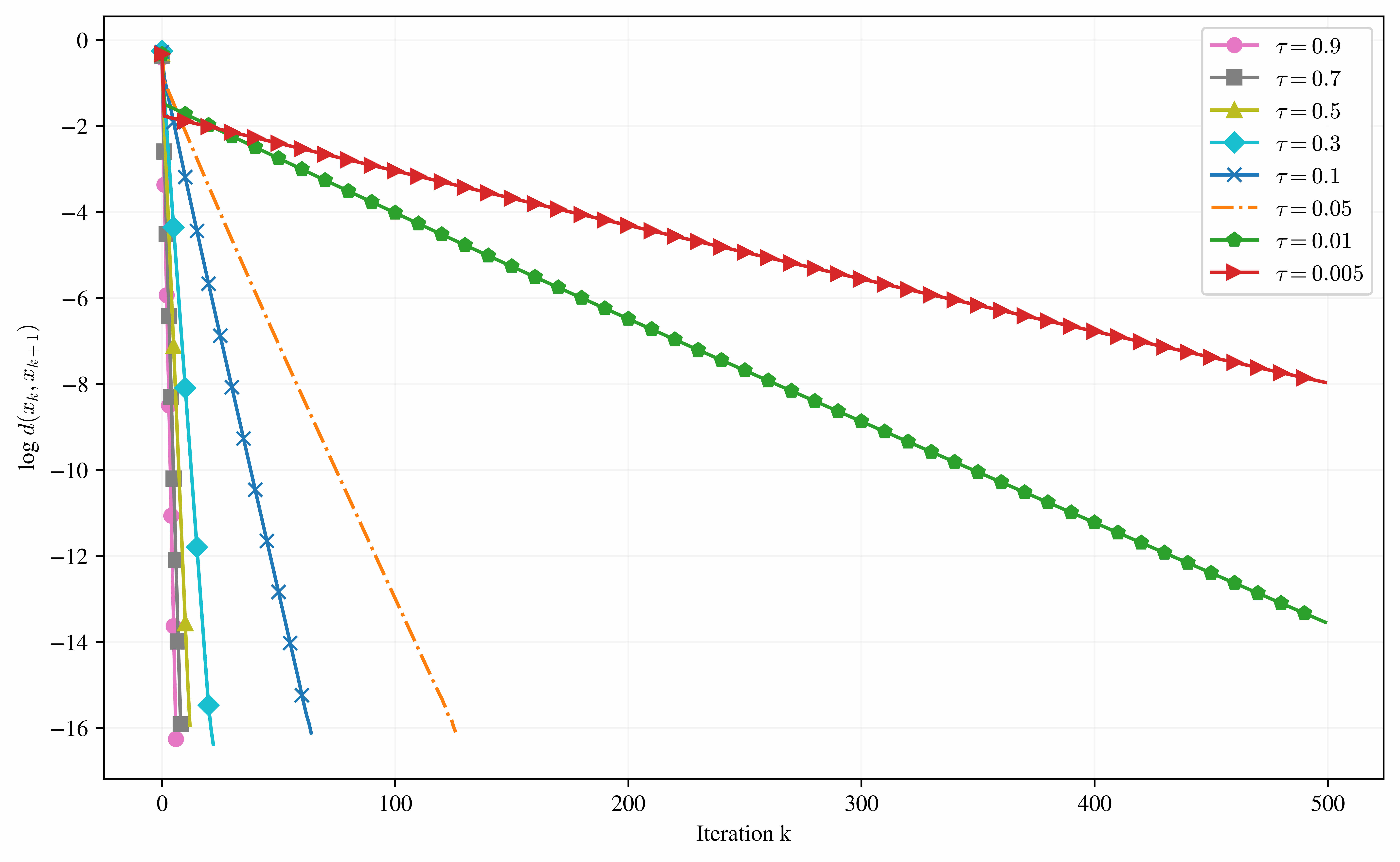}
  \caption{Convergence of Algorithm \ref{algo:cyclic_gd} for different damping parameters $\tau$
  for computatoin of the Fréchet mean on two data sets of size $20$.
  Left:  iteration on the Hadamard manifold $(\mathcal{S}_{++}^3, d)$.
  Right: iteration on the sphere $\mathbb{S}^{2}$ with the spherical metric.
  In each case, the plot shows the residual $d(x^k,x^{k+1})$ versus the iteration index $k$ on a logarithmic scale.}
  \label{fig:applications_convergence}
\end{figure}

\subsection{Spheres}

As another nonlinear model space, we consider the unit sphere
$
\mathbb{S}^{d-1}=\{x\in\mathbb{R}^d:\|x\|_2=1\},
$
equipped with the spherical (geodesic) metric
$
d_{\mathbb{S}}(x,y)=\arccos(\langle x,y\rangle),\qquad x,y\in \mathbb{S}^{d-1},
$
where $\langle\cdot,\cdot\rangle$ denotes the Euclidean inner product. Unlike the SPD manifold,
$\mathbb{S}^{d-1}$ has positive curvature. In particular, it is not globally CAT$(0)$.
However, on sufficiently small geodesically convex neighborhoods, it can be treated as a CAT$(\kappa)$
space with $\kappa>0$. By Lemma \ref{t:CATkappa-pucvx}, such neighborhoods are locally $p$-uniformly convex
with $p=2$ and a local convexity constant $c<2$. Consequently, the convergence theory of
Theorem \ref{t:msr convergence} applies whenever the corresponding assumptions are satisfied.
Optimality criteria for convex optimization on spheres using the tools of optimization on manifolds
has been explored in \cite{FerIusNem14, Ferreira2013-uq}.

For the numerical experiments, the data points on the sphere were sampled uniformly at random using
the second method in \cite{Marsa1972}, namely by generating independent standard normal vectors and
normalizing them to unit length.
The spherical experiment in Figure \ref{fig:applications_convergence}(right) exhibits behavior similar to the SPD case,
indicating that the assumptions of Theorem \ref{t:msr convergence} are satisfied.
The residual $d(x^k,x^{k+1})$ again shows an approximately linear behavior on the logarithmic scale, which reflects the
local linear convergence. It is perhaps surprising to observe that convergence appears to be globally
linear for the sphere, but this is not ruled out by the theory.  As with the Hadamard manifold
case, smaller values of $\tau$ lead to smaller
cycle diameters, and hence to more accurate
approximations of the Fréchet mean. 
For $\tau=0.9$  the maximum pairwise distance between points in the final cycle of Algorithm \ref{algo:cyclic_gd}
was $1.06131$;  for $\tau=0.005$  the maximum pairwise distance between points in the final cycle
was $0.00629994$.  The ratio of $\tau$ to the cycle diameter was not as constant over all the different values of $\tau$
(around $0.7$) indicating a slight nonlinear dependence of the cycle diameter with the relaxation parameter.

\section{Conclusion and Open Problems}
In Hadamard spaces we have shown that compositions and convex combinations of prox mappings of convex
functions are pointwise $\alpha$-fne, which together with existence and stability (Assumption \ref{ass:regularity abstr})
yields quantitative convergence of the fixed point iterations.  For the problem of computing Fr\'echet means, we believe
that Assumption \ref{ass:regularity abstr} can be removed entirely.  In other words, we conjecture that
any reasonable iteration consisting of compositions and convex combinations of prox mappings of the
distance function to the data points will converge globally linearly.  In spaces with positive curvature,
the a$\alpha$-fne property of the fixed point mapping still required additional assumptions (Assumption \ref{ass:regularity gen metric}
and \ref{ass:regularity averaged}), even when the fixed point mapping consists of compositions and convex combinations of
prox mappings of convex functions.  We conjecture that these assumptions can also be removed for
compositions and convex combinations of prox mappings of convex functions when the fixed point mapping is a self-mapping
on a space with small enough diameter,  small enough that the CAT($\kappa$) space is {\em symmetric perpendicular}
(see \cite[Remark 3, Theorem 21]{LauLuk21}).  We also believe that for the problem of computing
Fr\'echet means, Assumption \ref{ass:regularity abstr} can likewise
be removed entirely on CAT($\kappa$) spaces with small enough diameter.  In other words, we conjecture that
the iterates of any reasonable iteration consisting of compositions and convex combinations of prox mappings of the
distance function to data points on a symmetrically perpendicular CAT($\kappa$) space will converge linearly.  Moreover,
we see no obvious reason why this should not extend to objective functions that are finite sums of distances to geodesically convex sets with
at least one set being compact, so long as they are close enough to each other.  A final direction of future research is to determine
an easily identified class of functions, similar to semi-algebraic functions in linear spaces, for which gauge metric subregularity is
automatically satisfied.



\end{document}